\numberwithin{equation}{section}
\newcommand{\N}{\mathbb{N}}
\newcommand{\Q}{\mathbb{Q}}
\newcommand{\R}{\mathbb{R}}
\newcommand{\MM}{\mathscr{M}}
\newcommand{\PP}{\mathscr{P}}
\newcommand{\cA}{{\ensuremath{\mathcal A}}}
\newcommand{\cB}{{\ensuremath{\mathcal B}}}
\newcommand{\cF}{{\ensuremath{\mathcal F}}}
\newcommand{\cI}{{\ensuremath{\mathcal I}}}
\newcommand{\cM}{{\ensuremath{\mathcal M}}}
\newcommand{\cP}{{\ensuremath{\mathcal P}}}
\newcommand{\cW}{{\ensuremath{\mathcal W}}}
\newcommand{\mm}{{\mbox{\boldmath$m$}}}
\newcommand{\bbeta}{{\mbox{\boldmath$\beta$}}}
\newcommand{\ggamma}{{\mbox{\boldmath$\gamma$}}}
\newcommand{\ppi}{{\mbox{\boldmath$\pi$}}}
\newcommand{\sfd}{{\sf d}}
\newcommand{\sfe}{{\sf e}}
\newcommand{\sfC}{{\sf C}}
\newcommand{\restr}[1]{\lower3pt\hbox{$|_{#1}$}}
\newcommand{\eps}{\varepsilon}  
\newcommand{\nchi}{{\raise.3ex\hbox{$\chi$}}}
\newcommand{\weakto}{\rightharpoonup}
\newcommand{\Probabilities}[1]{\mathscr P(#1)}          
\newenvironment{comment}
{\begin{quote}
       \rule{.5\textwidth}{.5pt}\ \\
       \footnotesize}
{\\\rule{.5\textwidth}{.5pt}
\end{quote}}
\newtheorem{theorem}{Theorem}[section]
\newtheorem{corollary}[theorem]{Corollary}
\newtheorem{lemma}[theorem]{Lemma}
\newtheorem{proposition}[theorem]{Proposition}
\newtheorem{definition}[theorem]{Definition}
\newtheorem{remark}[theorem]{Remark}
\newcommand{\Lip}{\mathrm{Lip}}
\newcommand{\lip}{\mathrm{lip}}
\newcommand{\fr}{\hfill$\blacksquare$}   
\newcommand{\rcd}{\mathrm{RCD}}
\newcommand{\cd}{\mathrm{CD}}
\newcommand{\mcp}{\mathrm{MCP}}
\renewcommand{\mm}{\mathfrak m}
\newcommand{\lims}{\varlimsup}
\newcommand{\limi}{\varliminf}
\renewcommand{\limsup}{\varlimsup}
\renewcommand{\liminf}{\varliminf}
\renewcommand{\d}{{\rm d}}
\newcommand{\Geo}{{\sf Geo}}
\newcommand{\X}{{\rm X}}
\newcommand{\Y}{{\rm Y}}
\newcommand{\Z}{{\rm Z}}
\newcommand{\Lop}{L^0_p(T^*\X)}
\newcommand{\e}{\sfe}
\newcommand{\msp}{|\dot{\gamma}_t|}
\newcommand{\sfKe}{{\sf Ke}}
\newcommand{\Keq}{\sfKe_q}
\newcommand{\rmKe}{{\rm Ke}}
\newcommand{\OptGeo}{{\rm OptGeo}}
\newcommand{\Opt}{{\rm Opt}}
\newcommand{\Adm}{\Pi}
\newcommand{\Xdm}{(\X,\sfd,\mm)}
\newcommand{\Xdmx}{(\X,\sfd,\mm,x)}
\newcommand{\Xdmxinf}{(\X_\infty,\sfd_\infty,\mm_\infty,x_\infty)}
\newcommand{\Xdmxn}{(\X_n,\sfd_n,\mm_n,x_n)}
\newcommand{\Comp}{{\rm Comp}}
\newcommand{\supp}{{\rm supp}}
\title{A first-order condition for the independence on $p$ of weak gradients}
\author{Nicola Gigli}
\address{SISSA, Via Bonomea 265, 34136 Trieste}
\email{ngigli@sissa.it}
\author{Francesco Nobili}
\email{francesco.f.nobili@jyu.fi}
\begin{document}
\maketitle

\begin{abstract} It is well known that on arbitrary metric measure spaces, the notion of minimal $p$-weak upper gradient may depend on $p$.

In this paper we investigate how a first-order condition of the metric-measure structure, that we call Bounded Interpolation Property, guarantees that in fact such dependence is not present. 

We also show that the Bounded Interpolation Property  is stable for pointed measure Gromov Hausdorff convergence and holds on  a large class of spaces satisfying  curvature dimension conditions.
\end{abstract}

\tableofcontents

\bigskip

\section{Introduction}
In doing analysis on metric measure spaces one is often led to  consider the class of real valued Sobolev functions $W^{1,p}(\X)$, $p\in(1,\infty)$. There are various possible definitions for this object (\cite{H96}, \cite{Cheeger00}, \cite{Shanmugalingam00}, \cite{AmbrosioGigliSavare11}), most of them being equivalent: in this paper we are concerned with the approach proposed by Cheeger  \cite{Cheeger00}, which is equivalent to the one adopted by Shanmughalingham \cite{Shanmugalingam00} and to the more recent ones proposed by Ambrosio, Savar\'e and the first author \cite{AmbrosioGigliSavare11}, \cite{AmbrosioGigliSavare11-3}.

A common feature of all these equivalent definitions is that a Sobolev function $f\in W^{1,p}(\X)$  comes with a non-negative function in $L^p(\X)$ playing the role of the `modulus of the distributional differential'.  Unlike the Euclidean setting, this auxiliary function -  called `$p$-minimal weak upper gradient' and denoted by $|D f|_p$ -  might depend on $p$. More precisely, a quite direct consequence of the definitions is that for $p_1,p_2\in(1,\infty)$, $p_1<p_2$ we have $ W^{1,p_2}_{loc}(\X)\subset  W^{1,p_1}_{loc}(\X)$ and
\begin{equation}
|Df|_{p_1} \le |Df|_{p_2} \qquad \mm\text{-a.e.}\qquad\forall f\in W^{1,p_2}_{loc}(\X). \label{eq:Wp1p2}
\end{equation}
It is well known that in general this is all one can say. More precisely:
\begin{itemize}
\item[i)] In \cite{DiMarinoSpeight13}, a detailed study on weighted Euclidean spaces has been performed to build a family of metric measure spaces for which the strict inequality may occur in \eqref{eq:Wp1p2} (earlier examples in this direction have been proposed in \cite{KKST12}, see also the discussion in \cite[Section 2.9]{Bjorn-Bjorn11});
\item[ii)] There are examples (see, e.g.\ \cite[Remark 5.2]{KKST12} and \cite[Pag. 18]{AmbrosioGigliSavare11-3}) of metric measure spaces $\X$ showing that we may have:
\[ f \in W^{1,p_1}(\X), \ f,|Df|_{p_1} \in L^{p_2}(\mm) \quad \text{ but } \quad f \notin W^{1,p_2}(\X), \quad \forall p_2>p_1.\]
\end{itemize}
On the other hand, for very regular structures (e.g.\ the standard Euclidean space) neither i) nor ii) can occur, thus one can wonder if there are regularity assumptions one can impose on the given metric measure structure to ensure that even in such more abstract setting i), ii) do not occur. Some result in this direction is already known:
\begin{itemize}
\item[-] In \cite{Cheeger00} it has been proved that on doubling spaces supporting a Poincar\'e inequality, for Lipschitz functions $f$ we have that  $|Df|_p$ coincides a.e.\ with the local Lipschitz constant. A quite direct consequence of this fact is that i) cannot occur (see e.g.\ \cite[Corollary A.9]{Bjorn-Bjorn11} - we shall report this fact in Theorem \ref{prop:Chdoubpoincare}). In \cite[Lemma 5.1]{KKST12} it has been proved that, under the same assumptions, the situation ii) cannot occur, either.
\item[-] In  \cite{GigliHan14} it has been proved that on $\rcd(K,\infty)$ spaces neither i) nor ii) can occur, with a technique based on heat flow regularization and the Bakry-\'Emery inequality. Notice that $\rcd(K,\infty)$ spaces are not doubling in general.
\end{itemize} 
The goal of this paper is to further investigate this topic, our main results being:
\begin{itemize}
\item[a)] To propose an axiomatization of spaces having $p$-independent weak upper gradients (Definition \ref{def:pindependent}), especially distinguishing between a \emph{weak} and \emph{strong} kind of independence, roughly consisting in asking that i) cannot occur or that neither i) nor ii) can occur, respectively.
\item[b)] To investigate the consequences of these independences for what concerns the abstract differential calculus as proposed in \cite{Gigli14}. Perhaps not surprisingly, if weak upper gradients are independent of $p$, then the corresponding differentiation operators are also independent of $p$ (see Theorems \ref{thm:dpindepenw}, \ref{thm:dpindepens} and notice that in principle for any Sobolev exponent $p$ one has an abstract differential $\d_p$ that might depend on $p$).
\item[c)] To single out a regularity property yielding $p$-independent weak upper gradients in a strong sense (Definition \ref{def:bip} and Theorem \ref{thm:pind}). The property we introduce, that we shall call Bounded Interpolation Property, is a first order property (in particular, it remains valid if we replace the original measure $\mm$ with  $\rho\mm$ with $\rho,\frac1\rho\in L^\infty$). With respect to the previous available results recalled above, we obtain an independence  without relying on any local doubling property, and thus valid also in certain genuine infinite-dimensional situations.
\item[d)] We prove that the Bounded Interpolation Property is stable under measured Gromov Hausdorff convergence (Theorem \ref{thm:pmGHBIP}).
\item[e)] We show that a large class of spaces satisfying  a curvature dimension condition possess the Bounded Interpolation Property. In particular, this is the case for non branching $\mcp(K,N)$ spaces (Theorem \ref{thm:BIPMCP}).
\end{itemize}

\section{Preliminaries}
\begin{quote}
\begin{center}\textbf{Standing notation}\end{center} \noindent Throughout this note, it goes without saying that, whenever we fix $p \in [1,\infty)$, the letter $q$, even if not introduced, is automatically defined as the \emph{conjugate exponent} by the relation
\[ \tfrac1p +\tfrac1q =1\] and thus ranging in $(1,\infty]$ (with the usual extreme case $p=1$ and $q=\infty$). The converse convention is also kept, i.e. if we start fixing a number $q$, then automatically $p$ is defined as above. Typical recurrent exponents will be $p_1,p_2, \bar p$, thus giving rise to the conjugate numbers $q_1,q_2,\bar q$.
\end{quote} 

\noindent Let us fix $(\X,\sfd)$ a complete and separable metric space and $q \in (1,\infty)$. We equip the space of continuous path on $[0,1]$ with values   in $\X$, denoted by $C([0,1],\X)$, with the uniform distance $\sfd_{\text{sup}}(\gamma,\eta):= \sup_{t\in[0,1]} \sfd(\gamma_t,\eta_t)$, for every $\gamma,\eta \in C([0,1],\X)$. This turn $C([0,1],\X)$ into a complete and separable metric space. Next, we recall that the set of $q$-absolutely continuous curves, denoted by AC$^q([0,1],\X)$, is the subset of $\gamma \in C([0,1],\X)$ so that there exists $g \in L^q(0,1)$ satisfying
\[ \sfd(\gamma_t,\gamma_s) \le \int_s^t g(r)\, \d r, \qquad \forall s\le t \text{ in } [0,1].\]  
We recall that, for any $\gamma \in $ AC$^q([0,1],\X)$, there exists a minimal a.e. function $g \in L^q(0,1)$ satisfying the above, called metric speed and denoted $\msp$, which is defined as $\msp:= \lim_{h\downarrow 0} \sfd(\gamma_{t+h},\gamma_t)/h$, for a.e. $t$. Then, define the Kinetic energy functional $C([0,1],\X) \ni \gamma \mapsto \Keq(\gamma) := \int_0^1 | \dot{\gamma}_t|^q\, \d t$, if $\gamma \in AC^q([0,1],\X)$, $+\infty$ otherwise. We recall the following well known lemma and, given that we are going to deal frequently with the objects below, we also provide a simple proof.
\begin{lemma}\label{lem:closuremsp}
Let $(\X,\sfd)$ be a metric space, $q \in (1,\infty)$ and $(\gamma^n) \subseteq $ \emph{AC}$^q([0,1],\X)$ uniformly converging to $\gamma \in C([0,1],\X)$ with $\sup_n \Keq(\gamma^n) < \infty$. 

Then, $\gamma \in \emph{AC}^q([0,1],\X) $ and $ \Keq(\gamma)\le \liminf_n \Keq(\gamma^n).$ Moreover, if $\Keq(\gamma^n) \rightarrow \Keq(\gamma)$ (i.e. there is conservation of the Kinetic Energy), then one also recovers $|\dot{\gamma}_{\cdot}^n| \rightarrow |\dot{\gamma}_{\cdot}|$ in $L^q(0,1)$.
\end{lemma}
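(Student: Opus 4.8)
The plan is to prove the three assertions in order, using a parametrized reparametrization argument together with lower semicontinuity of the $L^q$ norm.

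\emph{Lower semicontinuity of $\Keq$.} First I would show $\gamma\in\mathrm{AC}^q$. The bound $\sup_n\Keq(\gamma^n)<\infty$ together with H\"older gives a uniform $\mathrm{AC}^1$-type estimate, so $g^n:=|\dot\gamma^n_\cdot|$ is bounded in $L^q(0,1)$; by reflexivity of $L^q$ (here $q\in(1,\infty)$) we may pass, up to a not relabeled subsequence, to a weak limit $g\weakto g$ in $L^q(0,1)$. For fixed $s\le t$ the functional $h\mapsto\int_s^t h\,\d r$ is a continuous linear form, hence weakly continuous, so passing to the limit in $\sfd(\gamma^n_t,\gamma^n_s)\le\int_s^t g^n\,\d r$ and using uniform convergence on the left yields $\sfd(\gamma_t,\gamma_s)\le\int_s^t g\,\d r$ for all $s\le t$. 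Thus $\gamma\in\mathrm{AC}^q([0,1],\X)$ and, by minimality of the metric speed, $|\dot\gamma_\cdot|\le g$ a.e. Then
\[
\Keq(\gamma)=\int_0^1|\dot\gamma_t|^q\,\d t\le\int_0^1 g^q\,\d t\le\liminf_n\int_0^1 (g^n)^q\,\d t=\liminf_n\Keq(\gamma^n),
\]
the middle inequality being weak lower semicontinuity of the $L^q$ norm. Since the original sequence is arbitrary and every subsequence has a further subsequence along which the liminf is attained, the bound holds for the full sequence.

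\emph{Strong convergence under energy conservation.} Assume now $\Keq(\gamma^n)\to\Keq(\gamma)$, i.e. $\|g^n\|_{L^q}\to\|g\|_{L^q}$. Combined with $g^n\weakto g$ in $L^q$, and using that $L^q$ is uniformly convex for $q\in(1,\infty)$, the standard Radon--Riesz property gives $g^n\to g$ strongly in $L^q(0,1)$. It remains to identify $g$ with $|\dot\gamma_\cdot|$: once this is done we are finished. To get $|\dot\gamma_\cdot|=g$ a.e. (rather than merely $\le$), note that for each $n$ and each $0\le s\le t\le 1$ we have $\sfd(\gamma^n_t,\gamma^n_s)\le\int_s^t g^n\,\d r$, and for $\gamma$ the metric speed is the minimal such $L^q$ function; but the limiting inequality $\sfd(\gamma_t,\gamma_s)\le\int_s^t g\,\d r$ together with $\|g\|_{L^q}=\liminf_n\|g^n\|_{L^q}\le\|\,|\dot\gamma^n_\cdot|\,\|$ forces $\|g\|_{L^q}\le\|\,|\dot\gamma_\cdot|\,\|_{L^q}$; combined with $|\dot\gamma_\cdot|\le g$ this yields $\|g\|_{L^q}=\|\,|\dot\gamma_\cdot|\,\|_{L^q}$ and hence $g=|\dot\gamma_\cdot|$ a.e., so $|\dot\gamma^n_\cdot|=g^n\to g=|\dot\gamma_\cdot|$ in $L^q(0,1)$.

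\emph{Expected main obstacle.} The delicate point is precisely the last identification $g=|\dot\gamma_\cdot|$ a.e.: weak limits of metric speeds need not \emph{a priori} be metric speeds, only upper bounds for them, and the chain of inequalities closing the gap must be arranged carefully so as not to circularly invoke energy conservation before it is used. A clean way around this, which I would adopt if the direct argument above feels fragile, is to argue on subintervals: fix a Lebesgue point $t_0$ of $|\dot\gamma_\cdot|$ and of $g$, apply the inequality $\sfd(\gamma_{t_0+h},\gamma_{t_0})\le\int_{t_0}^{t_0+h} g\,\d r$, divide by $h$, let $h\downarrow 0$ to get $|\dot\gamma_{t_0}|\le g(t_0)$, and then use that $\int_0^1 g^q=\int_0^1|\dot\gamma|^q$ to upgrade $\le$ to $=$ a.e. The passage from weak to strong convergence via uniform convexity is routine and I would not belabor it.
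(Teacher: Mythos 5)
Your argument is correct and follows essentially the same route as the paper's proof: extract a weak $L^q$ limit of the metric speeds, pass to the limit in the absolute-continuity estimate using uniform convergence, invoke minimality of $\msp$ and weak lower semicontinuity of the $L^q$-norm for the liminf inequality, and under energy conservation turn the chain into equalities to identify the weak limit with $\msp$ and upgrade to strong convergence via uniform convexity. The identification step you flag as delicate is handled exactly as in the paper (equality of norms plus the pointwise bound $\msp\le g$), so the extra Lebesgue-point detour is unnecessary.
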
 
\begin{proof}
For the first part, it suffices to notice that any weak-$L^q$ limit $G$ (possibly along a not relabeled suitable subsequence) of $|\dot{\gamma}_{\cdot}^n|$ in $L^q(0,1)$ satisfies
\[ \sfd(\gamma_t,\gamma_s) = \lim_n\sfd(\gamma^n_t,\gamma^n_s) \le \int_s^tG(r)\, \d t, \qquad \forall s,t \in [0,1].\]
Thus, $\gamma \in $ AC$^q([0,1],\X)$ and, by minimality of $\msp$ and weak lower semicontinuity of $L^q$-norms, one has
\[\Keq(\gamma) \le \int_0^1G^q(t)\, \d t \le \liminf_{n\rightarrow \infty}\Keq(\gamma^n)\]
Moreover, under the hypotheses of the second claim, the above becomes a chain of equalities ensuring that $G =\msp$ is a strong limit in $L^q(0,1)$.
\end{proof}

Next, we will make use of the so-called evaluation map at time $t \in [0,1]$, $\e_t \colon C([0,1],\X)\rightarrow \X$ defined via $\e_t(\gamma):= \gamma_t$. 

In this note, a metric measure space  is a triple $\Xdm$ where
\[\begin{split}
(\X,\sfd)\qquad & 	\text{is a complete and separable metric space},\\
\mm\neq 0\qquad & \text{is non negative and boundedly finite Borel measure}.
\end{split}
\]
We denote by  $\cM_b^+(\X)$ the space of finite Borel positive measures over $\X$. Also, we write $\cP(\X)$ for the space of probability measures over $\X$ which here will be often equipped with the \emph{weak topology} in duality with  $C_b(\X)$, the space of continuous and bounded functions. In this case, convergence of $\mu_n$ to $\mu$ will be written $\mu_n\weakto \mu$. Recall that, if $\phi \colon \X \rightarrow \Y$ is a Borel map between two metric spaces and $\mu \in \cP(\X)$, the set-value map $\cB(\Y) \ni B\mapsto \phi_\sharp \mu(B):= \mu(\phi^{-1}(B))$ is called the \emph{pushforward} measure of $\mu$ via $\phi$, and clearly $\phi_\sharp\mu \in \cP(\Y)$. When $\phi$ is continuous, the operation $\phi_\sharp$ is \emph{weakly} continuous. A family $\mathcal{K}\subset \cP(\X)$ is called \emph{tight}, provided
\[ \forall \epsilon>0, \exists K_\epsilon \subset \X \text{ compact so that} \quad \mu(\X\setminus K_\epsilon) \le \epsilon, \qquad \forall \mu \in \mathcal{K}.\]
For later use, we report without proof a well known characterization of compactness in the weak topology.
\begin{theorem}[Prokhorov]\label{thm:prokh}
Let $(\X,\sfd)$ be a complete and separable metric space and $\mathcal{K}\subset \cP(\X)$. The following are equivalent:
\begin{itemize}
\item[{\rm i)}] $\mathcal{K}$ is precompact in the weak topology;
\item[{\rm ii)}] $\mathcal{K}$ is tight;
\item[{\rm iii)}] There exists a functional $\psi\colon \X\to [0,\infty]$ with compact sublevels so that
\[ \sup_{\mu \in \mathcal{K}}\int \psi \, \d \mu <\infty.\]
\end{itemize}
\end{theorem}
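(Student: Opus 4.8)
The plan is to prove the equivalences (ii)~$\Leftrightarrow$~(iii) and (i)~$\Leftrightarrow$~(ii); the first is elementary, the implication (ii)~$\Rightarrow$~(i) is the ``soft'' half of Prokhorov's theorem, while (i)~$\Rightarrow$~(ii) is the part that genuinely uses completeness of $(\X,\sfd)$. I would take for granted a few standard facts: that the weak topology on $\cP(\X)$ is metrizable (since $\X$ is separable), so that (pre)compactness agrees with sequential (pre)compactness; the Portmanteau inequalities $\limsup_n\rho_n(F)\le\rho(F)$ for closed $F$ and $\liminf_n\rho_n(G)\ge\rho(G)$ for open $G$ when $\rho_n\weakto\rho$; that a single Borel probability measure on $(\X,\sfd)$ is tight; and Tietze's extension theorem. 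For (iii)~$\Rightarrow$~(ii): if $\psi$ has compact sublevels and $C:=\sup_{\mu\in\mathcal K}\int\psi\,\d\mu<\infty$, then Chebyshev gives $\mu(\{\psi>R\})\le C/R$ for all $\mu\in\mathcal K$, so the compact set $K_\epsilon:=\{\psi\le C/\epsilon\}$ witnesses tightness. For (ii)~$\Rightarrow$~(iii): given tightness, pick compacts $K_n\subseteq\X$ with $\sup_{\mu\in\mathcal K}\mu(\X\setminus K_n)\le 4^{-n}$ and set $\psi:=\sum_{n\ge1}2^n\,\mathbbm 1_{\X\setminus K_n}$; this is lower semicontinuous, hence has closed sublevels, and $\{\psi\le c\}\subseteq K_n$ once $2^n>c$, so its sublevels are compact, while $\int\psi\,\d\mu=\sum_n 2^n\mu(\X\setminus K_n)\le 1$ uniformly in $\mu$.

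For (ii)~$\Rightarrow$~(i) I would compactify: fix a homeomorphic embedding $\iota$ of $\X$ into the Hilbert cube $H:=[0,1]^\N$ (which exists since $\X$ is separable metric), noting that $\iota(\X)$ is a Borel subset of the compact metrizable space $H$ because $\X$ is Polish. Given $(\mu_n)\subseteq\mathcal K$, the pushforwards $\iota_\sharp\mu_n$ lie in $\cP(H)$, which is weakly compact and metrizable since $H$ is compact metric; so along a subsequence $\iota_\sharp\mu_{n_k}\weakto\nu\in\cP(H)$. Tightness yields compacts $K_m\subseteq\X$ with $\sup_n\mu_n(\X\setminus K_m)\le1/m$; as $\iota(K_m)$ is closed in $H$, the Portmanteau inequality gives $\nu(\iota(\X))\ge\nu(\iota(K_m))\ge\limsup_k\mu_{n_k}(K_m)\ge1-1/m$ for every $m$ (using injectivity of $\iota$ to identify $\iota^{-1}(\iota(K_m))=K_m$), hence $\nu$ is concentrated on $\iota(\X)$ and $\mu:=(\iota^{-1})_\sharp\nu\in\cP(\X)$ satisfies $\iota_\sharp\mu=\nu$. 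To see $\mu_{n_k}\weakto\mu$, take $f\in C_b(\X)$ and $\epsilon>0$, choose a compact $K$ with $\sup_k\mu_{n_k}(\X\setminus K)\le\epsilon$ and $\mu(\X\setminus K)\le\epsilon$, and Tietze-extend $f|_K$ to $\tilde f\in C_b(H)$ with $\|\tilde f\|_\infty\le\|f\|_\infty$: since $\tilde f\circ\iota=f$ on $K$ and the mass outside $K$ (resp.\ $\iota(K)$) is at most $\epsilon$, one gets $\big|\int f\,\d\rho-\int\tilde f\,\d\iota_\sharp\rho\big|\le 2\|f\|_\infty\epsilon$ for $\rho=\mu$ and for $\rho=\mu_{n_k}$, which together with $\int\tilde f\,\d\iota_\sharp\mu_{n_k}\to\int\tilde f\,\d\nu$ forces $\limsup_k\big|\int f\,\d\mu_{n_k}-\int f\,\d\mu\big|\le 4\|f\|_\infty\epsilon$; letting $\epsilon\downarrow0$ and invoking metrizability gives precompactness of $\mathcal K$.

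For (i)~$\Rightarrow$~(ii) I would argue by contradiction: suppose $\mathcal K$ is precompact but not tight, so some $\delta>0$ has the property that no compact $K$ satisfies $\sup_{\mu\in\mathcal K}\mu(\X\setminus K)\le\delta$. The crux is the claim that for all $r,\eta>0$ there is a finite union $A$ of $r$-balls with $\sup_{\mu\in\mathcal K}\mu(\X\setminus A)\le\eta$. Granting it, apply it with $r=1/k$, $\eta=\delta\,2^{-k}$ to obtain finite unions $A_k$ of $(1/k)$-balls with $\sup_{\mu\in\mathcal K}\mu(\X\setminus A_k)\le\delta\,2^{-k}$; then $K:=\overline{\bigcap_kA_k}$ is closed and totally bounded, hence compact by completeness of $(\X,\sfd)$, and $\sup_{\mu\in\mathcal K}\mu(\X\setminus K)\le\sum_k\delta\,2^{-k}=\delta$, contradicting the choice of $\delta$. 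To prove the claim, fix a countable dense $\{x_i\}_{i\in\N}$ and suppose it fails for some $r,\eta$: then the open sets $A_N:=\bigcup_{i\le N}B(x_i,r)$ increase to $\X$ yet each admits $\mu_N\in\mathcal K$ with $\mu_N(A_N)<1-\eta$; extracting a weak limit $\mu_{N_j}\weakto\mu\in\cP(\X)$ and applying the Portmanteau inequality to the open set $A_m$ (using $\mu_{N_j}(A_m)\le\mu_{N_j}(A_{N_j})<1-\eta$ for $N_j\ge m$) gives $\mu(A_m)\le1-\eta$ for every $m$, whence $\mu(\X)=\lim_m\mu(A_m)\le1-\eta<1$, contradicting $\mu\in\cP(\X)$. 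I expect the main obstacle to be the bookkeeping in these two post-limit Portmanteau arguments: faithfully transferring information between $\X$ and its Hilbert-cube compactification in (ii)~$\Rightarrow$~(i), and extracting the ``finitely many balls'' claim in (i)~$\Rightarrow$~(ii), which is exactly where completeness of $(\X,\sfd)$ is indispensable.
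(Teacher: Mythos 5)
Your proof is correct, but there is nothing in the paper to compare it with: Theorem \ref{thm:prokh} is stated explicitly ``without proof'' as a classical result, so the paper offers no argument of its own. What you give is the standard textbook proof of Prokhorov's theorem, and all the steps check out: (iii)$\Rightarrow$(ii) by Chebyshev, (ii)$\Rightarrow$(iii) via the lower semicontinuous series $\psi=\sum_n 2^n\,\nchi_{\X\setminus K_n}$ with compact sublevels, the sufficiency (ii)$\Rightarrow$(i) by embedding $\X$ as a Borel subset of the Hilbert cube, extracting a limit in $\cP(H)$, showing via Portmanteau on the closed sets $\iota(K_m)$ that it is concentrated on $\iota(\X)$ (the injectivity of $\iota$ being what justifies $\iota^{-1}(\iota(K_m))=K_m$), and transferring test functions back by Tietze plus the uniform tail bound; and the necessity (i)$\Rightarrow$(ii) through the finite-union-of-$r$-balls claim, where the set $\overline{\bigcap_k A_k}$ is closed and totally bounded, so that completeness of $(\X,\sfd)$ enters exactly and only there, while separability provides the dense sequence and the metrizability of the weak topology that lets you argue sequentially. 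The only implicit ingredients are standard facts you flag yourself (tightness of a single Borel probability on a Polish space, Polish spaces being $G_\delta$, hence Borel, in the Hilbert cube, and compactness of $\cP(H)$ for compact $H$), so the argument is complete as written.
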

Finally, we shall denote as usual by $\Lip(\X)$ and  $\Lip_{bs}(\X)$, the spaces of Lipschitz and boundedly supported Lipschitz functions , respectively. Moreover, if $f \in \Lip(\X)$, we call $\Lip(f)$ its Lipschitz constant.

\subsection{$q$-test plans and Sobolev space} We follow the definition of Sobolev spaces proposed in \cite{AmbrosioGigliSavare11-3} (for earlier approaches see the original work \cite{Cheeger00} of Cheeger and the one \cite{Shanmugalingam00} of Shanmugalingam). 
\begin{definition}[$q$-test plan]\label{def:qplan}
Let $\Xdm$ be a metric measure space and $q \in (1,\infty)$. A measure $\pi \in \cP(C(\X,[0,1]))$ is said to be a $q$\emph{-test plan}, provided 
\begin{itemize}
\item[{\rm i)}] there exists $C>0$ so that $(\e_t)_{\sharp}\pi \le C\mm$ for every $t \in [0,1]$;
\item[{\rm ii)}] we have $\int \Keq(\gamma) \, \d \pi <\infty$.
\end{itemize} 
Moreover, we say that $\pi$ is an $\infty$\emph{-test plan} if, instead of {\rm ii)}, we require 
\begin{itemize}
\item[{\rm ii')}] $\pi$ is concentrated on equi-Lipschitz curves, i.e.\ for some $L>0$ we have  $\Lip(\gamma)\leq L$ for $\pi$-a.e.\ $\gamma$ (and thus for every $\gamma$ in the support of $\pi$ by the lower-semicontinuity of the global Lipschitz constant w.r.t.\ uniform convergence).
\end{itemize}
\end{definition}
We usually refer to {\rm i)} as the `compression condition' and denote by $\Comp(\pi)$ the smallest constant $C$ satisfying {\rm i)} (and call it compression constant of $\pi$). The (full) kinetic energy of a $q$-test plan $\pi$, which we denote by $\rmKe_q(\pi)$, is simply $\|\Keq\|_{L^1(\pi)}$. Also, notice that
\begin{equation}
\pi \mapsto \rmKe_q(\pi) \qquad \text{is weakly lower semicontinuous,}\label{eq:KEqlsc}
\end{equation} 
since the integrand $\Keq(\gamma)$ appearing in the definition of $\rmKe_q(\cdot)$ is a lower semicontinuous on $C([0,1],\X)$ by Lemma \ref{lem:closuremsp} and bounded from below, therefore it admits the representation as supremum of continuous and bounded functions. Moreover, we shall sometimes deal with plans $\pi$ having bounded support: this is equivalent to say that $\{\gamma_t \colon \gamma \in \text{supp}(\pi), \ t \in  [0,1]\}\subset \X$ is bounded. In this note, it will be important to notice that the class of $q$-test plans is closed under restriction and rescaling. Namely, for any $q$-test plan $\pi$, we have:
\begin{itemize}
\item[$\circ$] if $\Gamma \subset C([0,1],\X)$ is Borel and $\pi(\Gamma) >0$ then, the \emph{rescaling} 
\begin{equation}
\frac{\pi\restr{\Gamma}}{\pi(\Gamma)}\qquad  \text{is a } q\text{-test plan}.\label{eq:pirescal}
\end{equation}
\item[$\circ$] let $s,t \in [0,1]$ with $s\le t$ and $\mathsf{Restr}_s^t \colon C([0,1],\X)\rightarrow C([0,1],\X)$ be the map via $\mathsf{Restr}_s^t(\gamma) := \gamma_{(1-\cdot)s + \cdot t}$. Then, the \emph{restriction} 
\begin{equation}
({\sf Restr}_s^t)_{\sharp}\pi\qquad\text{is a } q\text{-test plan}.\label{eq:pirestr}
\end{equation}
\end{itemize}
The proofs of these facts are based on a direct check of {\rm i)}-{\rm ii)} in Definition \ref{def:qplan}. In particular, for the restriction, it is necessary to make use of the change of variable
$\Keq({\sf Restr}_s^t(\gamma))= \Keq(\gamma) |t-s|^q$ for every $\gamma \in AC^q([0,1],\X)$. We are finally ready to present the definition of Sobolev class via duality with $q$-test plans. 
\begin{definition}[Sobolev class]\label{def:Sp}
Let $\Xdm$ be a metric measure space and $p \in (1,\infty)$. A Borel function $f$ belongs to $S^p(\X)$, provided there exists $G \in L^p(\X,\mm)$ with $G\ge 0$, called $p$\emph{-weak upper gradient} so that
\begin{equation}\label{def:Spineq}
\int| f(\gamma_1)-f(\gamma_0)| \, \d \pi\le \iint_0^1 G(\gamma_t)\msp \, \d t\d\pi, \qquad \forall \pi \ q\text{-test plan}.
\end{equation}
\end{definition}
Let us comment on the well-posedness of the definition. The assignment $(t,\gamma) \mapsto G(\gamma_t)\msp$ is Borel (see, e.g. \cite{GP19}) and the right hand side in \eqref{def:Spineq} is finite since the properties of any $q$-test plan $\pi$ ensure
\begin{equation}
\begin{aligned}
\iint_0^1 G(\gamma_t)\msp \, \d t\d\pi &\le \left( \iint_0^1 G^p(\gamma_t)\, \d t\d\pi \right)^{1/p}\left( \iint_0^1 \msp^q\, \d t\d\pi\right)^{1/q} \\
&\le \Comp(\pi) \|G\|_{L^p(\mm)}\rmKe_q^{1/q}(\pi) < +\infty.
\end{aligned}\label{eq:finiteRHS}
\end{equation}
The above calculation shows at the same time finiteness of the integral and  continuity of the assignment $L^p(\mm) \ni G \mapsto \iint_0^1 G(\gamma_t))\msp \, \d t\d\pi$. This, combined with the fact that convex combinations of $p$-weak upper gradients is a $p$-weak upper gradient, shows that the set of $p$-weak upper gradients of a given Borel function $f$, is a closed (because \eqref{eq:finiteRHS} shows that the map sending $G\in L^p(\mm)$ to the RHS of \eqref{def:Spineq} is continuous for any $q$-test plan $\pi$) and convex subset of the uniformly convex Banach space $L^p(\mm)$. The minimal $p$-weak upper gradient, denoted by $|Df|_p$, is then the element of minimal norm in this class. Also, by making use of the lattice property of the set of $p$-weak upper gradients (see \cite[Proposition 2.17 and Theorem 2.18]{Cheeger00}), such minimality is also in the $\mm$-a.e. sense. 

It will be useful to keep in mind the locality property of minimal weak upper gradients (see e.g.\ \cite[Proposition 4.8-(b)]{AmbrosioGigliSavare11} for the case $p=2$ and \cite[Proposition 5.2]{AmbrosioGigliSavare11-3} for the general case) i.e.:
\[ 
|Df|_p=|Dg|_p \quad \mm\text{-a.e. on } \{f=g\},\qquad \forall f,g \in S^{p}(\X),p \in (1,\infty),
\] 
and the Leibniz rule i.e.: for $f,g\in S^{p}(\X)\cap L^\infty (\mm)$ we have $fg\in S^{p}(\X)\cap L^\infty(\mm)$ with
\begin{equation}
\label{eq:leibwug}
|D(fg)|_p\leq |f||Dg|_p+|g||Df|_p,\qquad\mm\text{-a.e.}.
\end{equation}


Let us comment on  the relation between minimal $p$-weak upper gradients with respect to different $p$'s: for $p_1\le p_2$,  H\"older inequality easily implies $\{q_1 \text{-test plans} \} \subseteq \{q_2 \text{-test plans} \}$. Therefore the implication
\begin{equation}
p_1 \le p_2\text{ and } \begin{array}{l} f \in S^{p_2}(\X)\\ |Df|_{p_1} \in L^{p_1}(\mm)\end{array} \quad \Rightarrow \quad \begin{array}{l} f \in S^{p_1}(\X)\\ |Df|_{p_2}\text{ is a $p_1$-weak upper gradient} \\|Df|_{p_1} \le |Df|_{p_2} \quad \mm\text{-a.e.}

\end{array} \label{eq:Dfp1p2}
\end{equation} 
can be easily seen to be true on an arbitrary metric measure space. On the other hand, as pointed out already in the Introduction, there is no evidence that a function $f \in S^{p_1}(\X)\cap S^{p_2}(\X)$ satisfies $|Df|_{p_1} = |Df|_{p_2}$ $\mm$-a.e..\ Indeed, this fact is false in general and strict inequality may occur, see e.g. the analysis in \cite{DiMarinoSpeight13}. This motivates the choice of the $p$-exponent subscript for $|Df|_p$.

The prototype of functions belonging to the Sobolev class are the Lipschitz ones. We define the \emph{local Lipschitz constant} as the function
\[ \lip \ f(x) := \limsup_{y\rightarrow x} \frac{|f(y)-f(x)|}{\sfd(x,y)},\]
and $0$ if $x$ is isolated. A direct check of the definition shows then that $\Lip_{bs}(\X) \subseteq S^p(\X)$, and $\lip \ f$ is a $p$-weak upper gradient for every $ f \in \Lip_{bs}(\X)$ and $p \in (1,\infty)$.  In the sequel, we are going also to deal with Lusin-Lipschitz functions. We recall that a function $f \colon \X \rightarrow \R$ is Lusin-Lipschitz, provided there exists $N, K_n$ Borel, $n\in \N$, with $\X:= N\cup (\cup_n K_n)$, $N$ negligible and $K_n$ compacts, so that $f\restr{K_n}$ is Lipschitz for every $n \in \N$. In this case, it is convenient to adapt the notion of Lipschitz constant with the measure-theoretic notion of \emph{approximate local Lipschitz constant}
\[ \text{ap-lip }f(x) := \text{ap-}\limsup_{y\rightarrow x} \frac{|f(y)-f(x)|}{\sfd(x,y)},\]
and $0$ if $x$ is isolated, where the \emph{approximate limsup} of an arbitrary Borel function $u \colon \X\rightarrow \R$ is defined as
\[ \text{ap-}\limsup_{y\rightarrow x} u(x) := \inf \{t \in \R \colon x\text{ is a density point of }\{u\le t\}\},\]
recalling that, for a Borel set $E\subset \X$, $x \in E$ is a density point of $E$ if $\lim_{r\downarrow 0}\frac{\mm(B_r(x)\cap E)}{\mm(B_r(x)}=1$. On doubling spaces (see {\rm i)} in Definition \ref{def:Xdoubpoinc} below) it is standard to see, thanks to the existence of density points,  that the approximate Lipschitz constant  is \emph{local}, namely
\begin{equation}
\label{eq:locaplip}
\text{ap-lip }f = \text{ap-lip }g \qquad \mm\text{-a.e. on }\{f=g\}
\end{equation}
and it is not hard to check, from the fact that at a density point porosity cannot occur (see e.g.\  Proposition 2.5 in \cite{GT20}) that also
\begin{equation}
\label{eq:apliplip}
\text{ap-lip }f =\lip\, f \qquad \mm\text{-a.e. }\quad\forall f:\X\to\R\text{ Lipschitz}
\end{equation}
holds on doubling spaces.



Requiring a function in the Sobolev class to be also $p$-integrable leads to the definition of the full Sobolev space. We remark that the definition below makes sense, meaning that $W^{1,p}$ is a well-defined subset of $L^p$. In other words, the fact that a function $f$ belongs to a certain Sobolev class $S^p$ - and the corresponding notion of weak upper gradient - is invariant under modification of $f$ on a $\mm$-negligible set: this is due to property i) in Definition \ref{def:qplan} of test plans.
\begin{definition}[Sobolev space $W^{1,p}(\X)$]\label{def:W1p}
Let $\Xdm$ be a metric measure space and $p \in (1,\infty)$. The Sobolev space, denoted by $W^{1,p}(\X)$, is $L^p(\mm)\cap S^p(\X)$ as a set, equipped with the norm
\[ \|f\|_{W^{1,p}(\X)} := \big( \|f\|_{L^p(\mm)}^p +  \||Df|_{p}\|_{L^p(\mm)}^p\big)^{\frac1p}, \qquad \forall f \in W^{1,p}(\X).\]
\end{definition}
It is a standard fact that $W^{1,p}(\X)$ is a Banach space, by appealing to the weak lower semicontinuity of weak upper gradients (see \cite[Theorem 2.7]{Cheeger00} for the proof that such semicontinuity implies completeness). Also, it is in general false that it is reflexive and that the particular choice of $p=2$ leads  to a Hilbert space. When the latter situation occurs, we say that $\Xdm$ is \emph{infinitesimal Hilbertian} \cite{Gigli12}. Equivalently, we shall call a metric measure space infinitesimal Hilbertian provided the following \emph{parallelogram identity} is satisfied:
\begin{equation} 2|Df|_2^2 +2|Dg|_2^2 = |D(f+g)|_2^2+|D(f-g)|_2^2, \quad \mm\text{-a.e.}, \forall f,g \in W^{1,2}(\X). \label{eq:infhilb} \end{equation}

We finish this part by recalling the definition of \emph{local} Sobolev class.
\begin{definition}\label{def:Sploc}
Let $\Xdm$ be a metric measure space, $p \in (1,\infty)$. A real valued Borel function $f$ belongs to $S^p_{loc}(\X)$, provided there is $G\in L^p_{loc}(\mm)$ (i.e.\ the restriction of $G$ to bounded sets is in $L^p$), $G\geq 0$, such that for any $k>0$ and  $\eta\in \Lip_{bs}(\X)$, we have $\eta f^k \in S^{p}(\X)$ with
\[
|D (\eta f^k)|_p\leq |\eta| G,\qquad\mm\text{-a.e. on } \{ \eta =1\},	
\]
where $f^k:= k\wedge(f\vee- k)$. In this case, we define $|Df|_p \in L^p_{loc}(\mm)$ via
\[ |Df|_p = |D(\eta f^k)|_p \qquad \mm\text{-a.e. on } \{ \eta =1\}	\cap \{ |f|<k\},\]
for every $\eta$ and $k$ as before.
\end{definition}
We point out that the locality of the minimal $p$-weak upper gradient guarantees that the above definition is well-posed and the object $|Df|_p$ is $\mm$-a.e.\ well defined. It can also be proven that $f\in S^p_{loc}(\X)$ if and only if for some $G\in L^p_{loc}(\mm)$ non-negative \eqref{def:Spineq} holds.

We remark that in the generality we are working, the need of truncating the function before multiplying it by the cut-off is due to the following issue: there might be  $f\in S^p_{loc}(\X)$ and some $\eta\in  \Lip_{bs}(\X)$ such that $\eta f$ is not in $S^p(\X)$. Intuitively, this is due to the fact that the `best upper bound' for $|D (\eta f)|_p$ is $|f||D \eta|_p+|\eta||Df|_p$ and without any control on the size of $f$ it might be that this latter function is not in $L^p(\mm)$ (notice that even in the Euclidean setting there are a function $f\in L^1_{loc}$ with distributional differential in $L^p$ and a smooth compactly supported function $\eta$ such that the distributional differential of $\eta f$ is not in $L^p$).

\subsection{Optimal Transport on geodesic spaces}
We recall here basic features of Optimal transportation on a complete and separable metric space $(\X,\sfd)$. For a thorough discussion on the topic, we refer to the monograph \cite{Villani09} (see also \cite{AmbrosioGigli11}).

Let $q \in (1,\infty)$ and denote by $\cP_q(\X)$ the set of probabilities $\mu \in \cP(\X) $ with finite $q$-moment, i.e. $\int_\X \sfd^q(x,x_0)\, \d \mu <\infty$ for some (and thus, any) $x_0 \in \X$. Also, for a (possibly countable) cartesian product of the space $\X$, denote by $P^i$ and $P^{1,...,i}$ the canonical projections onto the $i$-th factor and the first $i$ factors, respectively. We equip $\cP_q(\X)$ with the \emph{Wasserstein distance}
\begin{equation}
 W_q(\mu_0,\mu_1) := \Big( \inf_{\alpha \in \Pi(\mu_0,\mu_1)}\int_{\X\times\X} \sfd^q(x,y)\, \d \alpha(x,y)\Big)^{1/q},\label{eq:Wp}
\end{equation}
where $\mu_0,\mu_1 \in \cP_q(\X)$ and $\Pi(\mu_0,\mu_1):= \{\alpha \in \cP(\X\times \X) \colon P^1_{\sharp}\alpha = \mu_0, P^2_{\sharp}\alpha= \mu_1\}$ is the set of \emph{admissible plans}. Any minimizer of \eqref{eq:Wp} is called \emph{optimal plan} and we denote $\Opt_p(\mu_0,\mu_1)$ the collection of optimal plans. We refer to $(\cP_q(\X),W_q)$ as the Wasserstein space. We report now some basic fact about convergence and compactness on the \emph{Wasserstein space}. Let $(\mu_n)\subseteq \cP_q(\X)$ and recall that
\begin{equation}
 W_q(\mu_n,\mu)\to 0 \quad \Leftrightarrow \quad \begin{array}{l}
\mu_n \weakto \mu \\
\int \sfd^q(x,x_0) \, \d \mu_n \rightarrow \int \sfd^q(x,x_0)\, \d \mu
\end{array} \label{eq:Wqweak}
\end{equation} 
as $n$ goes to infinity for $\mu \in \cP_q(\X)$ and for some (hence, any) $x_0 \in \X$. Moreover, $(\cP_q(\X),W_q)$ is complete and separable (if and only if $\X$ is so) and a family $\mathcal{K}$ is compact with respect to the topology induced by $W_q$ if and only if is tight and $q$-uniformly integrable.

 Throughout this manuscript, geodesics are always considered as constant speed curves defined on $[0,1]$ that realize the distance between their endpoints (and in particular are length  minimizing). The space $\Geo(\X)\subset C([0,1],\X)$ is the collection of such geodesics. Then, on a Polish geodesic spaces we can consider the set of \emph{dynamical optimal plans} between $\mu_0$ and $\mu_1$, defined as
\[ \OptGeo_q(\mu_0,\mu_1):= \{ \pi \in \cP(\Geo(\X)) \colon (\e_0,\e_1)_{\sharp}\pi\in \Opt_p(\mu_0,\mu_1) \}.\]
In this case, we have thus $(\e_0,\e_1)_\sharp \pi \in \Opt_q(\mu_0,\mu_1)$ and the kinetic energy realizes the Wasserstein distance; in fact it is not hard to see that we have
 \begin{equation}
W^q_q(\mu_0,\mu_1) =\rmKe_q(\pi), \quad \text{ if and only if }\quad  \pi \in \OptGeo_q(\mu_0,\mu_1). \label{eq:costgeo}
\end{equation}

In other words, the transportation cost can be equivalently evaluated as the superposition of kinetic energies of transportation geodesic in the support of optimal dynamical plans. Notice that existence of plans in $\OptGeo_q(\mu_0,\mu_1)$ implies the existence of `many' geodesics starting from the support of $\mu_0$ and ending on the support of $\mu_1$, but it can be that  $\OptGeo_q(\mu_0,\mu_1)$ is not empty for a large class of measures $\mu_0,\mu_1$ even if the underlying space $\X$ is not assumed to be geodesic. This is the case, for instance, of ${\sf CD}/{\sf RCD}(K,\infty)$ spaces and of spaces possessing the Bounded Interpolation Property that we introduce here (in all these examples it is not hard to see that geodesics with any given endpoints exist as soon as we assume bounded closed sets to be compact, but we shall not do so).

We shall sometimes consider the Wasserstein space also over the complete and separable space $(C([0,1],\X),\sfd_{\text{sup}})$. To avoid confusion, we will write in this situation $(\cP_q(C([0,1],\X),\mathcal{W}_q)$.


\subsection{Differential calculus}
Let $L^0(\mm)$ be the space of equivalence classes up to $\mm$-a.e. equality of Borel functions on $\X$ equipped with the topology of local convergence in measure. We recall the algebraic notion of a normed module over $\Xdm$ and discuss a nonsmooth differential calculus on metric measure spaces as in \cite{Gigli14}. In the following definition, we denote by $\hat{1}$ the equivalence class up to $\mm$-negligible set of the function constantly equal to one.
\begin{definition}[$L^0(\mm)$-normed module]
Let $\Xdm$ be a metric measure space. We call a $L^0(\mm)$\emph{-normed module} the quadruple $(\MM,\tau,\cdot,|\cdot|)$, where
\begin{itemize}
\item[{\rm i)}] $(\MM,\tau)$ is a topological space;
\item[{\rm ii)}] $\cdot \colon L^0(\mm) \times\MM \rightarrow\MM$ is a bilinear map satisfying the product's axioms \[ g\cdot(f\cdot v) = (fg)\cdot v, \quad \hat{{1}}\cdot v = v, \quad \forall f,g \in L^0(\mm), v \in \MM;\]
\item[{\rm iii)}] The map $|\cdot| \colon \MM \rightarrow L^0(\mm)$, called \emph{pointwise norm}, satisfying $|v|\ge 0$ and $|f\cdot v| = |f||v|$ $\mm$-a.e. for every $f\in L^0(\mm),v\in\MM$, is s.t. the function $\sfd_{\MM} \colon \MM\times \MM \rightarrow [0,\infty]$ defined via
\[ \sfd_{\MM}(v,w) :=\int_{\X} |v-w| \wedge 1 \, \d \mm', \quad \text{for some chosen, fixed } \mm' \text{ so that } \mm \ll \mm'\ll \mm,\]
is a complete distance on $\MM$ inducing the topology $\tau$.
\end{itemize}
\end{definition}
One can check that point {\rm iii)} does not make such definition ill posed, since the metric $\sfd_{\MM}$ may depend on the choice of $\mm'$, but the induced topology $\tau$ on $\MM$ does not. We recall the existence and uniqueness theorem of a suitable cotangent structure over a metric measure space in the language of $L^0(\mm)$-normed modules (see \cite[Theorem 1.8]{Gigli17} for the proof in the case $p=2$ in the language of $L^2$-normed modules and \cite[Theorem 2.4]{GR17} for that in the language of $L^0$-normed modules, and still the case $p=2$ - notice that these arguments adapt to general $p\in(1,\infty)$ without relevant modifications).
\begin{theorem}[$p$-Cotangent module] \label{thm:ctgmodule}
Let $\Xdm$ be a metric measure space and $p \in (1,\infty)$. Then, there is a unique couple $(\Lop,\d_p)$ where $\Lop$ is a $L^0(\mm)$-normed module and $\d_p \colon S^{p}_{loc}(\X) \rightarrow L^0_p(T^*\X)$ is linear and satisfying
\begin{itemize}
\item[{\rm i)}] For any $f \in S^{p}_{loc}(\X)$, it holds $|\d_pf|=|Df|_p$ $\mm$-a.e.;
\item[{\rm ii)}] The space $\{\d_pf \colon f \in W^{1,p}(\X)\}$ generates $\Lop$.
\end{itemize}
Here, uniqueness is intended up to unique module isomorphism, i.e. if $(\MM,L)$ is another couple with the same properties, then there is a unique isomorphism $\Phi \colon \MM\rightarrow \Lop$ so that $\Phi\circ L=\d_p$.
\end{theorem}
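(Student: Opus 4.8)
The plan is to reproduce the \emph{pre-cotangent} (or Lebesgue) module construction of \cite{Gigli17, GR17}, checking along the way that nothing in it is sensitive to the value of the exponent; the only structural inputs are the locality of the minimal $p$-weak upper gradient and the fact that a boundedly supported element of $S^{p}_{loc}(\X)$ truncated and cut off sits in $W^{1,p}(\X)$, both available for every $p\in(1,\infty)$. For existence I would first consider the set $\mathrm{Pcm}$ of finite families $\{(A_i,f_i)\}_{i=1}^n$, where $(A_i)_i$ is a Borel partition of $\X$ and $f_i\in S^{p}_{loc}(\X)$, and declare $\{(A_i,f_i)\}\sim\{(B_j,g_j)\}$ precisely when $|D(f_i-g_j)|_p=0$ $\mm$-a.e.\ on $A_i\cap B_j$ for all $i,j$. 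Using the locality of minimal weak upper gradients one checks that this is an equivalence relation and that the natural operations pass to the quotient $\mathrm{Pcm}/\!\sim$: addition (via a common refinement of the two partitions), multiplication by simple functions $\sum_j\lambda_j\chi_{B_j}$ (again by refinement), and the pointwise norm $|[\{(A_i,f_i)\}]|:=\sum_i\chi_{A_i}|Df_i|_p\in L^0(\mm)$. Then one verifies the module axioms on these explicit representatives and completes $\mathrm{Pcm}/\!\sim$ with respect to the distance $v\mapsto\int|v|\wedge 1\,\d\mm'$ for a fixed auxiliary $\mm'$ with $\mm\ll\mm'\ll\mm$; the result is an $L^0(\mm)$-normed module, which we name $\Lop$, and setting $\d_pf:=[\{(\X,f)\}]$ makes $\d_p$ linear with $|\d_pf|=|Df|_p$ $\mm$-a.e.\ directly from the definition of the pointwise norm, which is {\rm i)}.

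For the generation property, by construction the simple-function combinations of the $\d_pf$, $f\in S^{p}_{loc}(\X)$, are dense in $\Lop$, so it only remains to see that each such $\d_pf$ lies in the $L^0(\mm)$-submodule generated by differentials of global Sobolev functions. Here I would fix $x_0\in\X$, take cutoffs $\eta_n\in\Lip_{bs}(\X)$ with $\eta_n\equiv 1$ on $B_n(x_0)$, and set $f_n:=\eta_nf^n$, which belongs to $S^{p}(\X)\cap L^p(\mm)=W^{1,p}(\X)$ because it is boundedly supported. Locality of $\d_p$, inherited from that of $|Df|_p$, gives $\chi_{A_n}\d_pf_n=\chi_{A_n}\d_pf$ on $A_n:=B_n(x_0)\cap\{|f|<n\}$, and since $\chi_{A_n}\to\hat 1$ in $L^0(\mm)$ we get $\d_pf$ as the $\Lop$-limit of such combinations; this yields {\rm ii)}.

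For uniqueness, given another couple $(\MM,L)$ with the stated properties, I would define $\Phi$ on the dense set of elements $\sum_i\chi_{A_i}L(f_i)$, $f_i\in W^{1,p}(\X)$, by $\Phi\big(\sum_i\chi_{A_i}L(f_i)\big):=\sum_i\chi_{A_i}\d_pf_i$; applying property {\rm i)} to both couples shows the pointwise norms of the two sides coincide $\mm$-a.e., so $\Phi$ is well defined, $L^0(\mm)$-linear and pointwise-norm preserving on a dense subset, hence extends to an isometric module morphism with $\Phi\circ L=\d_p$ whose image, being closed and dense, is all of $\Lop$. Any morphism intertwining $L$ and $\d_p$ is forced to agree with $\Phi$ on the generators and therefore, by continuity, everywhere, giving uniqueness. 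I expect the delicate point to be the existence step: one must be scrupulous that the equivalence relation is genuinely compatible with addition and with multiplication by simple functions, that the pointwise norm descends to classes, and that the $L^0$-completion truly produces a module in the sense of the definition above (in particular that $\sfd_\MM$ is a complete distance whose topology is independent of $\mm'$); once this bookkeeping is in place, properties {\rm i)}, {\rm ii)} and uniqueness are comparatively soft.
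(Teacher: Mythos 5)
Your construction is correct and it is exactly the route the paper has in mind: the paper does not prove Theorem \ref{thm:ctgmodule} itself but refers to \cite{Gigli17} and \cite{GR17}, whose pre-cotangent-module construction (finite Borel partitions paired with Sobolev functions, quotient via locality of $|D\cdot|_p$, pointwise norm, $L^0$-completion, then generation by $W^{1,p}$ differentials via truncation/cut-off and uniqueness by extending an isometric morphism from the dense set of simple combinations) is precisely what you reproduce, with the only $p$-sensitive inputs being locality and the fact that $\eta f^k\in W^{1,p}(\X)$, both valid for all $p\in(1,\infty)$. So your proposal matches the argument the paper delegates to the references, confirming its claim that those arguments adapt to general $p$ without relevant modifications.
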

In the above statement, by generating, we mean that simple $L^0$-linear combinations are $\sfd_{L^0_p(T^*\X)}$-dense and by module isomorphism, we mean a map $\Phi$ preserving the module's operation that is also a pointwise isometry. Moreover, we sometimes informally call any element of the $p$-Cotangent module `Borel covector field'. Motivated by the need to discuss in this note also tangent structures over a metric measure space it is natural to give a definition of \emph{dual} of a module.
\begin{definition}[Dual of $L^0(\mm)$-normed module]
Let $\Xdm$ be a metric measure space and $\MM$ be a $L^0(\mm)$-normed module. Then, we define its \emph{dual module} $\MM^*$ as 
\[\MM^*:= \{ L \colon \MM \rightarrow L^0(\mm) \colon L\text{ is }L^0(\mm)\text{-linear and continuous} \}, \]
equipped with the following operations
\[\begin{split} 
(L+L')(v)&:=L(v)+L'(v), \\
(f\cdot L)(v)&:= L(f\cdot v), \\
|L|_*&:= \emph{ess sup }\{L(v) \colon v \in \MM, |v|\le 1\ \mm\text{-a.e.}\},
\end{split}\]
for any $f \in L^0(\mm)$, $L,L' \in \MM^*, v \in \MM$. 
\end{definition}
It is an easy task to check that $\MM^*$ has a natural $L^0(\mm)$-normed module structure (in particular, $|L|_*<\infty$ $\mm$-a.e. - this can be proved e.g.\ by contradiction assuming that $|L|_*=+\infty$ $\mm$-a.e.\ on a Borel set $A$ with positive measure. Indeed in this case with a cut-off procedure we can find $v_n$ with $|v_n|\leq 1$  and $|L(v_n)|\geq n^2$ on $A$. Then $\nchi_Av_n/n\to 0$ but $|L(\nchi_Av_n/n)|\geq1$ on $A$ for every $n\in\N$, contradicting continuity). Moreover, we can then define the $p$-\emph{tangent module} simply as
\[ 
L^0_p(T\X) := \big(L^0_p(T^*\X)\big)^*,\qquad \forall p \in (1,\infty). 
\]
An $L^0(\mm)$-normed module is called a Hilbert module provided
\[
2|v|^2+2|w|^2=|v-w|^2+|v+w|^2,\qquad\mm\text{-a.e.},
\]
holds for any $v,w\in \MM$. It is easy to check that a module is Hilbert if and only if its dual is so.  On  infinitesimally Hilbertian metric measure spaces, from \eqref{eq:infhilb} it follows that $L^0(T^*\X)$, and thus also $L^0(T\X)$ are Hilbert modules.

In the sequel, we shall sometimes require to work with $p$-integrable covector and vector fields among the Borel ones. In these situations, we restrict the attention to the spaces 
\[ \begin{aligned}
	&L^p(T^*\X) := \{ \omega \in L_p^0(T^*\X)  \colon |v| \in L^p(\mm) \},    &&\| \omega \|^p_{L^p(T^*\X)}:=  \int|\omega|^p\, \d \mm, \\
	&L^q(T\X) := \{ X \in L^0_q(T\X) \colon |X|_* \in L^q(\mm)\},     &&\| X \|^q_{L^q(T\X)} :=  \int | X|_*^q\, \d \mm,
\end{aligned}\]
which have a natural structure of $L^p(\mm)$-normed (resp. $L^q(\mm)$-normed) $L^\infty(\mm)$-module. We will not discuss such structure and refer to the monograph \cite{Gigli14} for a detailed discussion. Here, we shall only recall that they are modules over the commutative ring $L^\infty(\mm)$ and that they are Banach spaces.

\section{Independence under interpolation density bounds}

\subsection{Weak and strong $p$-independent gradients}
As already highlighted in \eqref{eq:Dfp1p2}, we shall only expect one inequality between minimal weak upper gradients with different $p$'s.  In light of the two different pathological situations described in the Introduction, we give the following definition.
\begin{definition}[$p$-independent weak upper gradients]\label{def:pindependent}
Let $\Xdm$ be a metric measure space. We say that it has $p$-independent weak upper gradients in the weak sense, provided for any $p_1,p_2 \in (1,\infty)$:
\begin{itemize}
\item[a)] $W^{1,p_1}(\X)\cap W^{1,p_2}(\X)$ is dense in both $W^{1,p_1}(\X)$ and $W^{1,p_2}(\X)$;
\item[b)] for any $f 	\in  W^{1,p_1}(\X)\cap W^{1,p_2}(\X)$ it holds $|Df|_{p_1}=|Df|_{p_2}$ $\mm$-a.e.;
\end{itemize}
Moreover, we say that $\X$ has $p$-independent weak upper gradients in the strong sense if we require \emph{a)-b)} and
\begin{itemize}
\item[c)] any $f\in W^{1,p_1}(\X)$ with $f,|Df|_{p_1} \in L^{p_2}(\mm)$ belongs to $W^{1,p_2}(\X)$.
\end{itemize}
\end{definition}
A non-trivial fact about doubling spaces supporting a Poincar\'e inequality is that they have $p$-independent weak upper gradients in the strong sense.  This is a consequence of the fact that on these spaces, in \cite{Cheeger00} it has been proved that the equality 
\begin{equation}
\label{eq:dach}
\lip(f)=|Df|_p,\qquad\mm\text{-a.e.},
\end{equation}
holds for any $ f\in \Lip_{bs}(\X)$ and of an approximation argument based on partition of the units done in \cite{KKST12} that shows that c) holds as well (see also \cite{AmbrosioGigliSavare11-3} for the proof that the different definition of Sobolev spaces in these references agree). Let us give some details, starting from the definitions.
\begin{definition}\label{def:Xdoubpoinc}
Let $\Xdm$ be a metric measure space. We say that
\begin{itemize}
\item[{\rm i)}] it is \emph{uniformly locally doubling} provided, for every $R>0$, there exists a constant ${\sf C}:={\sf C}(R)$ so that
\[  \mm(B_{2r}(x)) \le \sfC \mm(B_r(x)), \qquad \forall x \in \X,r\in (0,R).\]
For brevity, we shall only say that $\Xdm$ is a doubling metric measure space.
\item [{\rm ii)}] it supports a \emph{weak local $(1,1)$-Poincar\'e inequality}, provided for every $R>0$ there exists $\tau,\Lambda>0 $ so that for any $f \colon \X\rightarrow \R$ Lipschitz it holds
\[ \fint_{B_r(x)}|f-f_{B_r(x)}|\, \d \mm \le \tau r\fint_{B_{\Lambda r}(x)} \lip \ f \, \d \mm, \qquad \forall r \in (0,R),x \in \X,\] 
with the convention $f_B:= \fint_B f\, \d \mm$ for every $B \in \cB(\X)$.
\end{itemize}
\end{definition}
The notion of Poincar\'e inequality is often given on metric measure space with the concept of \emph{upper gradient}, rather than local lipschitz constant. Nevertheless, by appealing to \cite{AmbrosioGigliSavare11-3}, the two approaches are fully equivalent (we remark that in the earlier paper \cite{Keith03} the equivalence between the two versions of Poincar\'e inequality was proved on doubling spaces, which is sufficient for the present purposes - the arguments in \cite{AmbrosioGigliSavare11-3} remain valid even without doubling). We then have the following well known result (see for instance \cite[Corollary A.9]{Bjorn-Bjorn11} for a proof of the `weak' part, i.e.\ points a),b) in Definition \ref{def:pindependent} and \cite{KKST12} for the `strong' part, i.e.\ point c) in Definition \ref{def:pindependent}).
\begin{theorem}\label{prop:Chdoubpoincare}
Let $\Xdm$ be a doubling metric measure space supporting a weak local $(1,1)$-Poincar\'e inequality. Then it has $p$-independent weak upper gradients in the strong sense.
\end{theorem}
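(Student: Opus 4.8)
The plan is to reduce the statement, via the Cheeger identity \eqref{eq:dach}, to two independent claims: first that \eqref{eq:dach} holds on doubling spaces supporting a weak $(1,1)$-Poincar\'e inequality, and second that from \eqref{eq:dach} together with the density of Lipschitz functions (Cheeger's density theorem) one deduces a)-b), while a partition-of-unity argument upgrades this to c). Since the theorem explicitly cites \cite{Cheeger00}, \cite{KKST12}, and \cite{Bjorn-Bjorn11} for these facts, I would quote \eqref{eq:dach} as the starting point and not reprove it.

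First I would establish a): on a doubling space supporting a Poincar\'e inequality, $\Lip_{bs}(\X)$ is dense in $W^{1,p}(\X)$ for every $p\in(1,\infty)$ — this is the content of \cite[Theorem 4.24 (or the equivalent statements)]{Cheeger00}, and can also be seen from the fact that in this setting the various Sobolev definitions agree (by \cite{AmbrosioGigliSavare11-3}) and $W^{1,p}$-energy is approximated by energies of discrete convolutions / McShane-type Lipschitz extensions. Since $\Lip_{bs}(\X)\subseteq W^{1,p_1}(\X)\cap W^{1,p_2}(\X)$ whenever $\mm$ is boundedly finite (here using finiteness of $\mm$ on bounded sets so that a bounded Lipschitz function is $p$-integrable for all $p$), the intersection $W^{1,p_1}(\X)\cap W^{1,p_2}(\X)$ contains a dense subset of each, giving a). Then b): let $f\in W^{1,p_1}(\X)\cap W^{1,p_2}(\X)$ with, say, $p_1<p_2$. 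By \eqref{eq:Dfp1p2} we already have $|Df|_{p_1}\le|Df|_{p_2}$ $\mm$-a.e., so only the reverse inequality is at stake. Take Lipschitz $f_n\to f$ in $W^{1,p_2}(\X)$ (possible by a)); up to a subsequence $|Df_n|_{p_2}\to|Df|_{p_2}$ in $L^{p_2}$ and $\mm$-a.e. On the other hand $f_n\to f$ also in $W^{1,p_1}(\X)$ (because $L^{p_2}$-convergence of $f_n$ and of $|Df_n|_{p_2}=\lip f_n$ on, say, a bounded set forces $L^{p_1}$-convergence there, and one localizes using boundedly finite $\mm$ plus the fact that one may assume the $f_n$ supported in a fixed bounded set if $f$ is — the general case follows by exhausting $\X$ with bounded sets and a diagonal argument), so $|Df_n|_{p_1}\to|Df|_{p_1}$ in $L^{p_1}_{loc}$. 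But for each Lipschitz $f_n$, \eqref{eq:dach} gives $|Df_n|_{p_1}=\lip f_n=|Df_n|_{p_2}$ $\mm$-a.e.; passing to the limit yields $|Df|_{p_1}=|Df|_{p_2}$ $\mm$-a.e.

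For c), I would follow \cite[Lemma 5.1]{KKST12}. Given $f\in W^{1,p_1}(\X)$ with $f,|Df|_{p_1}\in L^{p_2}(\mm)$, the goal is to produce a sequence of Lipschitz functions converging to $f$ in $W^{1,p_2}(\X)$, which combined with completeness of $W^{1,p_2}(\X)$ and the weak lower semicontinuity of the $p_2$-energy shows $f\in W^{1,p_2}(\X)$. The construction uses a discrete convolution / partition of unity subordinate to a Whitney-type cover at scale $\rho$: on each ball one replaces $f$ by its average, glues with Lipschitz cutoffs summing to one, and uses the Poincar\'e inequality to control the local Lipschitz constant of the resulting function $f_\rho$ by a maximal-function-type average of $\lip$-data; the doubling property makes the maximal operator bounded on $L^{p_2}$, so $\lip f_\rho$ is bounded in $L^{p_2}$ by (a constant times) $\||Df|_{p_1}\|_{L^{p_2}}$ uniformly in $\rho$, and $f_\rho\to f$ in $L^{p_2}$. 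Extracting a weakly convergent subsequence of $\lip f_\rho$ in $L^{p_2}$ and using lower semicontinuity of the $p_2$-weak upper gradient under $L^{p_2}$-convergence of functions plus weak $L^{p_2}$-convergence of gradients gives $f\in W^{1,p_2}(\X)$ (and in fact $|Df|_{p_2}=|Df|_{p_1}$ by b)).

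The main obstacle is genuinely the Cheeger identity \eqref{eq:dach} and the attendant density of Lipschitz functions in $W^{1,p}$ on PI-spaces: this is a deep theorem (Cheeger's differentiation theory, or the self-improvement of Poincar\'e inequalities) and is the only non-elementary ingredient. Everything downstream — combining \eqref{eq:dach} with density to get a)-b), and the Whitney/partition-of-unity maximal-function estimate for c) — is comparatively routine measure-theoretic bookkeeping, with the one delicate point being the localization arguments needed because $\mm$ is only boundedly finite and $W^{1,p}$ functions need not be globally integrable across different exponents; these are handled by cutting off with $\Lip_{bs}$ functions and exhausting $\X$. Since the statement merely assembles known results, I would in the write-up cite \cite{Cheeger00}, \cite{AmbrosioGigliSavare11-3}, \cite{KKST12}, \cite{Bjorn-Bjorn11} and sketch the assembly rather than reproving the hard input.
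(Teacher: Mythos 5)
Your proposal takes essentially the same route as the paper, which offers no proof of Theorem \ref{prop:Chdoubpoincare} beyond assembling the cited literature: \eqref{eq:dach} from \cite{Cheeger00}, \cite[Corollary A.9]{Bjorn-Bjorn11} for points a)--b), \cite[Lemma 5.1]{KKST12} for point c), and \cite{AmbrosioGigliSavare11-3} for the equivalence of the Sobolev definitions. The extra details you supply (density of $\Lip_{bs}(\X)$ combined with \eqref{eq:dach} for b) --- where the passage $|Df_n|_{p_1}\to|Df|_{p_1}$ is justified locally via $|D(f_n-f)|_{p_1}\le|D(f_n-f)|_{p_2}$ from \eqref{eq:Dfp1p2} --- and the discrete-convolution maximal-function argument for c)) are consistent with those references, so the write-up is correct.
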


\subsection{The Bounded interpolation property (BIP)}
Aim of this section is to present our first order regularity constraint over a metric measure space within its implication concerning Sobolev spaces. This will be done by imposing a special behavior of the spreading of mass along the geodesic of the space according to the next definition.
\begin{definition}[Bounded interpolation property]\label{def:bip}
We say that a complete and separable metric measure space $\Xdm$ has the \emph{bounded interpolation property}, provided:

 for every $q\in (1,\infty)$ there exists a \emph{profile function}  $\R^+ \ni D\mapsto C_q(D) \in [1,\infty)$ so that for every $\mu_0 ,\mu_1 \in \cP(\X)$ absolutely continuous with bounded densities   and \emph{diam}$($\emph{supp}$(\mu_0) \cup$\emph{supp}$(\mu_1))<D$, there exists $\pi \in \OptGeo_q(\mu_0,\mu_1)$ satisfying 
\begin{equation}
(\e_t)_{\sharp}\pi = \rho_t\mm, \quad  \|\rho_t\|_{L^{\infty}} \le C_q(D)\|\rho_0\|_{L^{\infty}(\mm)} \vee\|\rho_1\|_{L^{\infty}(\mm)}, \qquad \forall t \in [0,1] \tag{BIP} \label{BIP},
\end{equation}
where $\rho_i:=\frac{\d\mu_i}{\d\mm}$. 

When this holds, we say for brevity that $\Xdm$ is a BIP-space, or that it  has the (BIP) with profile function $D\mapsto C_q(D)$.
\end{definition}
This axiomatization is inspired by the results of \cite{Rajala12-2}, where the very same special behavior of mass transportation has been investigated under synthetic lower Ricci bounds. Such analysis was carried for the exponent $q=2$, but we will see in Appendix \ref{B} that it can actually be performed for all $q \in (1,\infty)$. In this direction, \cite[Theorem 4.1]{Rajala12-2} ensures that the (BIP) yields the following:
\[ \text{A BIP-space supports a weak local }(1,1)\text{-Poincar\'e inequality.}\]
Thus we already know from Theorem \ref{prop:Chdoubpoincare} that if a BIP space is also doubling, then it has $p$-independent weak upper gradients in the weak sense. Our goal in this section is to show that, regardless of the doubling assumption, a BIP space has $p$-independent weak upper gradients in the strong sense. We then postpone to Section \ref{Sec5} the study of which sort of `known' spaces satisfy  (BIP). 

Notice that, for every $q$, it is not restrictive to suppose the profile function $D\mapsto C_q(D)$ to be nondecreasing and continuous, thus we shall implicitly use these facts sometime. Moreover, when the profile function is independent of $q$, as it will be in all the cases faced in Section \ref{Sec5}, we shall omit the subscript and simply write $D\mapsto C(D)$. 

For the sake of conciseness, we collect, for every $q \in (1,\infty)$ all the relevant interpolants in the class
\begin{equation}
{\sf Geod}_q (\X):= \left\{ \pi \in \OptGeo_q(\rho_0\mm,\rho_1\mm) \colon \begin{array}{l}
      D >0, \rho_0,\rho_1 \in L^\infty(\mm) \text{ probabilities}\\ $diam$($supp$(\rho_0) \cup $supp$(\rho_1))\le D \\
       (\e_t)_{\sharp}\pi \le C_q(D) \|\rho_0\|_{L^{\infty}(\mm)}\vee \|\rho_1\|_{L^{\infty}(\mm)} \mm
 \end{array} \right\}, \label{eq:Geodq}
\end{equation}
Notice the important fact that, no matter of the fixed exponent $q$, the defining property of this class ensures that
\begin{equation}
\label{eq:geodtest}
\text{any $\pi \in {\sf Geod}_q(\X)$ is an $\infty$-test plan, and thus also a $q'$-test plan for any $q'\in(1,\infty)$.}
\end{equation}
Indeed, every plan is concentrated on geodesics of the space whose lengths are controlled from above by some diameter. We shall also work with the `polygonal' version ${\sf PolGeo}_q(\X)$ of the above, defined as the set of plans $\pi\in\cP(C([0,1],\X))$ for which there are a finite Borel partition $(A_i)_{i=1,\ldots,N}$ of $C([0,1],\X)$ with $\alpha_i:=\pi(A_i)>0$  and, for every $j=0,\ldots,m-1, m\in \N$ and $i=1,\ldots,N$, we have $\alpha_i^{-1}({\sf Restr}_{\frac jm}^{\frac{j+1}m})_\sharp(\pi\restr{A_i})\in {\sf Geod}_q(\X)$.

\begin{lemma}[Approximation with polygonal plans]\label{lem:approxPol}
Let $(\X,\sfd,\mm)$ be a BIP-space, $q\in(1,\infty)$ and $\pi$ a $q$-test plan. Then there are $(\pi_n)\subset  \cP(C([0,1],\X))$ and $(\pi_{n,m})\subset {\sf PolGeo}_q(\X)$, $n,m\in\N$, such that:
\begin{itemize}
\item[i)] for every $n\in\N$, we have
\begin{itemize}
\item[a)]  $\pi_{n,m}\weakto \pi_n$ as $m\to\infty$;
\item[b)] $\lims_{m\to\infty}\rmKe_{q}(\pi_{n,m})\leq \rmKe_{q}(\pi_n)$;
\item[c)] for some $C(q,n)>0$ we have $(\e_t)_\sharp\pi_{n,m}\leq C(q,n)\mm$ for every $m\in\N$, and $t\in[0,1]$;
\end{itemize} 
\item[ii)] and moreover
\begin{itemize}
\item[a)] $\pi_{n}\weakto \pi$ as $n\to\infty$;
\item[b)] $\lims_{n\to\infty}\rmKe_{q}(\pi_{n})\leq \rmKe_{q}(\pi)$;
\item[c)]  for some $C(q)>0$ we have $(\e_t)_\sharp\pi_{n}\leq C(q)\mm$ for every $n\in\N$, and $t\in[0,1]$.
\end{itemize}
\end{itemize}
\end{lemma}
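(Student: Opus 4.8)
The plan is to establish the two nested approximations separately: claim ii) by a routine truncation of $\pi$, and claim i) — the substantive part, where the (BIP) is used — by a polygonal plan assembled cell-by-cell out of (BIP)-interpolants and glued together.

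\textbf{Claim ii).} I would fix $\bar x\in\X$, set $\Gamma_n:=\{\gamma:\Keq(\gamma)\le n,\ \sfd(\gamma_0,\bar x)\le n\}$, and (discarding finitely many indices) put $\pi_n:=\pi(\Gamma_n)^{-1}\pi\restr{\Gamma_n}$, which is a $q$-test plan by \eqref{eq:pirescal}. Since $\{\Keq=\infty\}$ is $\pi$-negligible by property ii) of test plans and $\{\sfd(\gamma_0,\bar x)>n\}$ has vanishing $\pi$-mass, we have $\pi(\Gamma_n)\uparrow 1$; hence ii.a) follows from $\int F\,\d\pi_n=\pi(\Gamma_n)^{-1}\int_{\Gamma_n}F\,\d\pi\to\int F\,\d\pi$ for bounded continuous $F$, ii.b) from dominated convergence applied to $\rmKe_q(\pi_n)=\pi(\Gamma_n)^{-1}\int_{\Gamma_n}\Keq\,\d\pi$, and ii.c) from $(\e_t)_\sharp\pi_n\le\pi(\Gamma_n)^{-1}\Comp(\pi)\,\mm\le 2\Comp(\pi)\,\mm=:C(q)\,\mm$ for $n$ large. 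Two features of $\pi_n$ will matter below: its marginals $(\e_t)_\sharp\pi_n$ are absolutely continuous with density $\le C(q)$, uniformly in $t$ and $n$, and, since $\sfd(\gamma_t,\gamma_0)\le\Keq(\gamma)^{1/q}$ on $\Gamma_n$, $\pi_n$ is concentrated on curves contained in the fixed ball $\bar B_{R_n}(\bar x)$ with $R_n:=n+n^{1/q}$.

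\textbf{Claim i), the construction.} Fix $n$ and write $\sigma:=\pi_n$, so $(\e_t)_\sharp\sigma=\rho_t\mm$ with $\|\rho_t\|_{L^\infty}\le C(q)$ and $\sigma$ supported on curves in $\bar B_{R_n}(\bar x)$; set $D_n:=2R_n$. For fixed $m$ let $t_j:=j/m$, pick a scale $\delta_m\downarrow 0$ to be tuned, fix a finite Borel partition $(E_k)$ of $\bar B_{R_n}(\bar x)$ into sets of diameter $<\delta_m$, and partition $C([0,1],\X)$ into the finitely many pieces $A_{\vec\imath}=\{\gamma:\gamma_{t_j}\in E_{i_j}\ \forall j\}$ having positive $\sigma$-mass. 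On each subinterval $[t_j,t_{j+1}]$ I would apply the (BIP), for every $\vec\imath$, to the normalised measures $(\e_{t_j})_\sharp(\sigma\restr{A_{\vec\imath}})$ and $(\e_{t_{j+1}})_\sharp(\sigma\restr{A_{\vec\imath}})$ — probabilities, absolutely continuous with density $\le C(q)/\sigma(A_{\vec\imath})$, and supported in $E_{i_j}$, $E_{i_{j+1}}$, hence in a set of diameter $\le D_n$ — obtaining dynamical optimal plans $\pi^{(j)}_{\vec\imath}$ whose interpolating marginals obey the (BIP) estimate. Gluing the $\pi^{(j)}_{\vec\imath}$ along their coinciding marginals at $t_1,\dots,t_{m-1}$ by the gluing lemma, re-weighting the $\vec\imath$-block by $\sigma(A_{\vec\imath})$ and summing over $\vec\imath$, and reparametrising the $j$-th block onto $[t_j,t_{j+1}]$ (cf. \eqref{eq:pirestr}), produces $\pi_{n,m}\in\cP(C([0,1],\X))$ with $(\e_{t_j})_\sharp\pi_{n,m}=\rho_{t_j}\mm$ for every $j$. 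Since restricting an optimal dynamical plan — in time, or to a measurable set of curves — preserves optimality and the (BIP) bound, the normalised restriction of $\pi_{n,m}\restr{A_{\vec\imath}}$ to each $[t_j,t_{j+1}]$ belongs to ${\sf Geod}_q(\X)$, whence $\pi_{n,m}\in{\sf PolGeo}_q(\X)$.

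\textbf{Verifications and the main obstacle.} Claim i.b) is immediate and even holds as an inequality for every $m$: the $j$-th block of $\pi_{n,m}$ has kinetic energy $m^{q-1}\sum_{\vec\imath}\sigma(A_{\vec\imath})\,W_q^q(\cdot,\cdot)$, and bounding each optimal cost by the cost of the identity coupling inherited from $\sigma$ and applying Jensen gives $\rmKe_q(\pi_{n,m})\le\iint_0^1|\dot\gamma_t|^q\,\d t\,\d\sigma=\rmKe_q(\pi_n)$. Claim i.a) would follow from the coupling of $\sigma$ and $\pi_{n,m}$ built into the construction — draw $\gamma\sim\sigma$, read off its block $A_{\vec\imath}$, draw the $\pi_{n,m}$-curve from the normalised $\vec\imath$-block — under which both curves pass at each time $t_j$ through the same cell $E_{i_j}$; since a $\pi_{n,m}$-curve runs along a geodesic of length $\le\diam(E_{i_j}\cup E_{i_{j+1}})\le\sfd(\gamma_{t_j},\gamma_{t_{j+1}})+2\delta_m$ on $[t_j,t_{j+1}]$ while $\gamma$ moves there by at most $\int_{t_j}^{t_{j+1}}|\dot\gamma_t|\,\d t$, the uniform distance between the two coupled curves is $\le 2\max_j\int_{t_j}^{t_{j+1}}|\dot\gamma_t|\,\d t+O(\delta_m)\to 0$ (absolute continuity of $t\mapsto\int_0^t|\dot\gamma_s|\,\d s$; $\delta_m\downarrow 0$), and integrating the truncated distance against this coupling yields $\pi_{n,m}\weakto\pi_n$. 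This forces $\delta_m\downarrow 0$, which is precisely what makes i.c) delicate: as $\#\{E_k\}$ diverges, the crude bound $(\e_t)_\sharp\pi_{n,m}\le\#\{\vec\imath\}\,C_q(D_n)\,C(q)\,\mm$ degenerates, and one must instead control, for each $t\in(t_j,t_{j+1})$, how many of the localised interpolants can pile up at a point — invoking the (BIP) at the coarsest useful resolution (one interpolant per pair of endpoint-cells, not per full index $\vec\imath$), together with a geometric multiplicity estimate, so that the combined intermediate density stays $\le C(q,n)\,\mm$ uniformly in $m$. I expect this last bound — reconciling $\delta_m\downarrow 0$ with an $m$-independent overlap control of the geodesic interpolants — to be the heart of the argument; everything else is bookkeeping with \eqref{eq:pirescal}, \eqref{eq:pirestr} and the gluing lemma.
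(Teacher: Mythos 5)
Your architecture has a structural flaw that you yourself flag, and it is not a missing technical estimate but a sign that the scheme must be inverted. In your construction the spatial scale $\delta_m$ of the cells $E_k$ must go to $0$ with $m$, because that is the only way your coupling argument can force $\pi_{n,m}\weakto\pi_n$ for the \emph{pre-assigned} truncation $\pi_n$ of $\pi$. But then the number of blocks $A_{\vec\imath}$ diverges with $m$, and the only bound the (BIP) gives when you decompose the marginals and interpolate each piece separately is (number of pieces)$\times C_q(D)\times$(density bound): there is no overlap or multiplicity control available in a general BIP space (the property is purely an existence statement for one interpolation between two prescribed marginals), so the hoped-for ``$m$-independent overlap control'' has no source and i-c) remains unproven. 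The paper's proof avoids exactly this tension by decoupling the two indices: after reducing to $\pi$ with compact support, it fixes a finite partition $(A_i)_{i\le N_n}$ of ${\rm supp}(\pi)\subset C([0,1],\X)$ into cells of $\sfd_{\rm sup}$-diameter $\le 1/n$, \emph{independent of} $m$, and builds $\pi_{n,m}$ by applying the (BIP) to $(\e_{j/m})_\sharp\alpha^i,(\e_{(j+1)/m})_\sharp\alpha^i$ and gluing; then $(\e_t)_\sharp\pi_{n,m}\leq N_nC_q(D)\Comp(\pi)\,\mm$ uniformly in $m$, which is all i-c) asks since $C(q,n)$ may depend on $n$. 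The price is that $\pi_n$ cannot be prescribed in advance: it is \emph{defined} as a subsequential weak limit of $(\pi_{n,m})_m$, obtained by tightness (estimate \eqref{eq:iic} plus Prokhorov, Theorem \ref{thm:prokh}); one then checks $(\e_t)_\sharp\pi_n=(\e_t)_\sharp\pi$ (giving ii-c) with $C(q)=\Comp(\pi)$), gets ii-a) from $\mathcal W_q(\pi_n,\pi)\le 1/n$ using that each $A_i$ has sup-diameter $\le 1/n$, and i-b), ii-b) from the Jensen computation \eqref{eq:claimpinmke} and lower semicontinuity \eqref{eq:KEqlsc}. So the correct route is not to complete your missing estimate but to give up convergence to a pre-chosen $\pi_n$ and let $\pi_n$ be produced by the compactness argument.

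There is a second, independent gap: the step ``fix a finite Borel partition $(E_k)$ of $\bar B_{R_n}(\bar x)$ into sets of diameter $<\delta_m$'' is not available. The space $\X$ is only complete and separable, and the paper deliberately does not assume bounded closed sets to be compact, so a bounded ball need not be totally bounded and in general admits no finite partition into sets of small diameter (think of the unit ball of an infinite-dimensional Hilbert space). Your truncation $\Gamma_n=\{\Keq\le n,\ \sfd(\gamma_0,\bar x)\le n\}$ only yields \emph{bounded} supports; the paper's reduction step instead uses inner regularity of the Borel probability $\pi$ on the Polish space $C([0,1],\X)$ to restrict to plans with \emph{compact} support, for which finite partitions into small cells (of curves) do exist, and then removes the restriction by a diagonal argument in the end. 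Finally, a minor point: your assertion that restricting a plan of ${\sf Geod}_q(\X)$ to a measurable set of curves ``preserves the (BIP) bound'' is not automatic (renormalization changes both the compression and the endpoint densities entering \eqref{eq:Geodq}); it is also unnecessary, since in your construction each block restricted to $[t_j,t_{j+1}]$ is a BIP interpolant by construction, so that sentence should simply be removed.
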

\begin{proof} 
Let us set for brevity $\Y:= C([0,1],\X)$ and assume at first that $\pi$ has compact support so that $E:= \{\gamma_t\colon \gamma \in \text{supp}(\pi), t \in [0,1]\} \subset \X$ is also compact. We put  $D:=$ diam$(E)<\infty$.

\noindent\textsc{Case }i). Let $m,n\in\N$ be fixed. Using the compactness of the support of $\pi$, find a finite Borel partition $(A_i)_{i=1,\ldots,N_n}$ of its support made of sets with positive $\pi$-measure and diameter $\leq \frac1n$. For $i\in\{1,\ldots,N_n\}$ put $\alpha^i:=\pi(A_i)^{-1}\pi\restr{A_i}$ and then for $j\in\{0,\ldots,m-1\}$ let $\beta^{i,j}\in\OptGeo_q((\e_{\frac jm})_\sharp\alpha^i,(\e_{\frac {j+1}m})_\sharp\alpha^i)\cap {\sf Geod}_q(\X)$ given by the (BIP). With a gluing argument (see e.g.\ \cite[Lemma 2.1.1]{G11}) we can then find a plan $\bbeta^i$ such that $({\sf Restr}_{\frac jm}^{\frac{j+1}m})_\sharp\bbeta^{i}=\beta^{i,j}$ for every $j\in\{0,\ldots,m-1\}$. We put $\pi_{n,m}:=\sum_{i=1}^{N_n}\pi(A_i)\bbeta^i$ and we notice that the construction and the BIP assumption easily guarantees that property {\rm (i-c)} holds. Moreover, we claim
\begin{equation}
\label{eq:claimpinmke}
\rmKe_{q}(\pi_{n,m})\leq \rmKe_{q}(\pi)
\end{equation}
and to this aim we notice that
\[
\begin{split}
\iint_0^1 \msp^q\, \d t \d \bbeta^i &= \sum_{j=0}^{m-1}\iint_{\frac jm}^{\frac{j+1}m} |\dot{\gamma}_{t}|^q\, \d t\d \bbeta^i \\
\Big(\text{from }({\rm Restr}_{\frac jm}^{\frac{j+1}m})_\sharp \bbeta^i = \beta^{i,j}\Big)\ \qquad& = \sum_{j=0}^{m-1}\iint_0^1m^{q-1}\msp^q\, \d t \d \beta^{i,j} \\
\text{(by \eqref{eq:costgeo})} \ \qquad&= \sum_{j=0}^{m-1}m^{q-1}W_q^q\big((\e_{\frac jm})_\sharp\alpha^i,(\e_{\frac{j+1}m})_\sharp\alpha^i\big) \\
&\le \sum_{j=0}^{m-1}m^{q-1}\int \sfd^q(\gamma_{\frac jm},\gamma_{\frac{j+1}m})\, \d \alpha^i\\
 &\le \sum_{j=0}^{m-1}m^{q-1} \int\big(\int_{\frac jm}^{\frac{j+1}m}\msp\, \d t\big)^q\, \d \alpha^i \\
 \text{(by Jensen)}\ \qquad&\le \sum_{j=0}^{m-1}m^{q-1-\frac{q}{p}} \iint_{\frac jm}^{\frac{j+1}m}\msp^q\, \d t\d\alpha^i   = \iint_0^1 \msp^q\, \d t \d \alpha^i,
 \end{split}
\]
for all $i=0,...,N_n$. This in particular guarantees that $(\pi_{n,m})_m$ is a sequence of $q$-test plans with uniformly bounded $q$-kinetic energy and compression. We are going now to produce a weak limit $\pi_n$, arguing by tightness. 

Fix $t$, let $j:=j(t,m)$ so that $t \in [j/m,(j+1)/m]$ and, using that $\pi_{n,m} \in {\sf PolGeo}_q(\X)$, we estimate 
\begin{equation}
\label{eq:iic}
\begin{split}
W_q( (\e_t)_{\sharp}\pi_{n,m}, (\e_t)_{\sharp}\pi) &\le W_q( (\e_t)_{\sharp}\pi_{n,m},(\e_{\frac jm})_{\sharp}\pi_{n,m} ) + W_q((\e_{\frac jm})_{\sharp}\pi,(\e_t)_{\sharp}\pi)\\
&\le \left( \int \sfd^q(\gamma_{\frac jm},\gamma_t)\, \d \pi_{n,m} \right)^{1/q} +  \left( \int \sfd^q(\gamma_{\frac jm},\gamma_t)\, \d \pi \right)^{1/q} \\
(AC^q\text{-supported})\qquad &\le  \Big( \int \big(\int_{\frac jm}^t \msp\, \d t\big)^q \d \pi_{n,m} \Big)^{1/q} +  \Big( \int \big(\int_{\frac jm}^t \msp\, \d t\big)^q \d \pi \Big)^{1/q}    \\
(\text{H}\ddot{\text{o}}\text{lder and }\eqref{eq:claimpinmke})\qquad &\le  2m^{\frac{1}{pq}}\rmKe^{1/q} _q(\pi).
\end{split}
\end{equation}
Taking into account that $\{ (\e_t)_{\sharp}\pi:t\in[0,1]\}$ is $W_q$-compact (because $\pi$ has finite $q$-energy and thus $t\mapsto  (\e_t)_{\sharp}\pi\in (\cP_q(\X),W_q)$ is continuous), 
this last estimate ensures that $\{ (\e_t)_{\sharp}\pi_{n,m}:t\in[0,1],m\in\N\}$ is $W_q$-precompact for every $n\in\N$. 
In particular such set is tight, and thus by Prokhorov's Theorem \ref{thm:prokh} there exists a function $\psi : \X\rightarrow \R$ with compact sublevels such that
\begin{equation}
\label{eq:datight}
 \sup_{t \in[0,1],m\in\N} \, \int \psi \, \d(\e_t)_{\sharp}\pi_{n,m}<\infty.
\end{equation}
Now, consider the  functional $\Psi : C([0,1],\X) \rightarrow \X$ defined by
\[ \Psi(\gamma) := \int_0^1 \psi(\gamma_t)+ \msp^q\, \d t, \quad \text{if } \gamma \in AC^q([0,1],\X), \quad +\infty \text{ otherwise}\]
and notice that it has compact sublevels as well (see, e.g., \cite[Lemma 5.8]{GMS15} for the case $q=2$ and observe that for $q \in (1,\infty)$ the proof works as well). By construction we have
\[ 
\sup_m\int \Psi\, \d \pi_{n,m} \le\sup_{t \in[0,1],m\in\N} \Big(\, \int \psi \, \d(\e_t)_{\sharp}\pi_{n,m}  + \rmKe_q(\pi_{n,m})\Big) \stackrel{\eqref{eq:claimpinmke},\eqref{eq:datight}}<\infty.
\]
Again, by Prokhrov's Theorem, we conclude that $(\pi_{n,m})_m$ is tight family and, up to not relabeled subsequences, we get the existence of a weak limit $\pi_n$ as $m$ goes to infinity. We thus obtained  {\rm (i-a)}. Also, from \eqref{eq:iic} we get
\begin{equation}
(\e_t)_{\sharp}\pi_n = (\e_t)_{\sharp}\pi, \qquad t \in [0,1].\label{eq:etpi}
\end{equation}
Now notice  that \eqref{eq:etpi} ensures \emph{a posteriori} $(\pi_{n,m})_m$ to be also a \emph{polygonal interpolation} of $\pi_n$ (recall that $\pi_{n,m}$ was built freezing marginals of $(\e_t)_\sharp\pi$ on a uniform time grid) whence \eqref{eq:claimpinmke} here reads $\rmKe_q(\pi_{n,m})\le \rmKe_q(\pi_n)$ for every $m\in \N$. Taking now the limsup yields {\rm (i-b)}.

\noindent\textsc{Case }ii). We immediately notice that \eqref{eq:etpi} ensures {\rm (ii-c)} with $C=\Comp(\pi)$. Next, we show {\rm (ii-a)} and, to this aim, we remark that $\Delta^n := \sum_{i=0}^{N_n} \alpha_i\pi_n\restr{A_i}\otimes \pi\restr{A_i}  \in \Adm(\pi_n,\pi)$ by construction. Then, we can estimate
\[ \mathcal{W}^q_q(\pi^n,\pi) \le \int_{\Y\times \Y} \sfd^q_\text{sup}(\gamma,\theta) \, \d \Delta^n(\gamma,\theta) = \sum_{i=0}^{N_n}  \int_{A_i\times A_i} \alpha_i\sfd^q_\text{sup}(\gamma,\theta)\, \d \pi_n(\gamma) \pi(\theta) \le \tfrac{1}{n^q},\]
where, evidently, we used that for $\pi_n \otimes\pi $-a.e. $(\gamma,\theta) \in A_i\times A_i$ we have $\sfd_\text{sup}(\gamma,\theta) \le \frac 1n$ due to the uniform bound of the diameter of $A_i$. This clearly implies {\rm (ii-a)}. But now, arguing again by weak lower semicontinuity \eqref{eq:KEqlsc}, we conclude recalling {\rm (i-b)} and  \eqref{eq:claimpinmke} that
\[  \limsup_{n\to \infty} \rmKe_q(\pi_n) \le \limsup_{n\to \infty}\liminf_{m\to \infty} \rmKe_q(\pi_{n,m}) \le \rmKe_q(\pi), \]
that is {\rm (ii-b)}.

\noindent\textsc{Reduction step}. In this final step, we relieve the proof of the Lemma of assumption $\pi$ supported on a compact set. Being $\pi$ a probability measure on the complete and separable space $\Y$, it is concentrated on a sigma-compact set. Let then $\Gamma_k \subset \Y$ be compact so that $\pi(\cup_k\Gamma_k) =1$, and consider, for every $k \in \N$, the plans $\pi^k := \pi(\cup_{i\le k}\Gamma_i)^{-1}\pi\restr{\cup_{i\le k}\Gamma_i}$. They are clearly of compact support, so that we can apply {\rm i)}-{\rm ii)} to produce the sequences $\pi^k_{n,m}$ and $\pi_n^k$ satisfying all the listed properties. Now, a diagonalization argument in $k$ and $n$ gives the conclusion.

\end{proof}

\begin{lemma}\label{lem:limPol}
Let $(\X,\sfd,\mm)$ be a metric measure space, $q\in(1,\infty)$ and $(\pi_n)\subset\cP(C([0,1],\X))$ be a sequence such that $\pi_n\weakto \pi$ as $n$ goes to infinity for some $q$-test plan $\pi\in \cP(C([0,1],\X))$. Assume that
\begin{equation}
\label{eq:unifcomp}
(\e_t)_\sharp\pi_n
\leq C\mm,\qquad\forall n\in\N,\ t\in[0,1],
\end{equation}
and that
\begin{equation}
\label{eq:limskeq}
\lims_{n\to\infty}\rmKe_{q}(\pi_{n})\leq \rmKe_{q}(\pi).
\end{equation}
Then for every $G\in L^p(\mm)$ we have
\begin{equation}
\lim_{n\to\infty}\iint_0^1 G(\gamma_t)|\dot\gamma_t|\,\d t\,\d\pi_n=\iint_0^1 G(\gamma_t)|\dot\gamma_t|\,\d t\,\d\pi.\label{eq:limG}
\end{equation}
\end{lemma}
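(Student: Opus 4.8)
The plan is to reduce to a boundedly supported Lipschitz integrand and then, via the Skorokhod representation theorem, to transfer the problem to an almost sure statement about curves, where the second part of Lemma \ref{lem:closuremsp} applies. Set $K:=\sup_n\rmKe_q(\pi_n)$, which is finite by \eqref{eq:limskeq} and $\rmKe_q(\pi)<\infty$; together with \eqref{eq:unifcomp} this makes every $\pi_n$ a $q$-test plan (Definition \ref{def:qplan}) and, as in \eqref{eq:finiteRHS}, makes both sides of \eqref{eq:limG} well defined and finite. Now, for any $q$-test plan $\sigma$ and any $G_1,G_2\in L^p(\mm)$, applying the Hölder estimate of \eqref{eq:finiteRHS} to $|G_1-G_2|$ gives
\[
\Big|\iint_0^1 (G_1-G_2)(\gamma_t)\,|\dot\gamma_t|\,\d t\,\d\sigma\Big|\le \Comp(\sigma)^{1/p}\,\rmKe_q(\sigma)^{1/q}\,\|G_1-G_2\|_{L^p(\mm)} .
\]
Since $\Comp(\pi_n)\le C$ and $\rmKe_q(\pi_n)\le K$ for all $n$, while $\Comp(\pi),\rmKe_q(\pi)<\infty$, the maps $G\mapsto\iint_0^1 G(\gamma_t)|\dot\gamma_t|\,\d t\,\d\pi_n$ and $G\mapsto\iint_0^1 G(\gamma_t)|\dot\gamma_t|\,\d t\,\d\pi$ are Lipschitz on $L^p(\mm)$ with a constant independent of $n$. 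As $\Lip_{bs}(\X)$ is dense in $L^p(\mm)$, it therefore suffices to prove \eqref{eq:limG} for $G\in\Lip_{bs}(\X)$, which we assume from now on; in particular $G$ is bounded and Lipschitz.

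Combining \eqref{eq:limskeq} with the weak lower semicontinuity \eqref{eq:KEqlsc} of $\rmKe_q$ gives $\rmKe_q(\pi_n)\to\rmKe_q(\pi)$. By the Skorokhod representation theorem (recall $C([0,1],\X)$ is complete and separable) there are a probability space $(\Theta,\mathbb P)$ and Borel maps $\xi_n,\xi\colon\Theta\to C([0,1],\X)$ with $(\xi_n)_\sharp\mathbb P=\pi_n$, $\xi_\sharp\mathbb P=\pi$, $\xi_n\to\xi$ $\mathbb P$-a.e.\ uniformly on $[0,1]$, and $\xi_n(\theta),\xi(\theta)\in {\rm AC}^q([0,1],\X)$ for $\mathbb P$-a.e.\ $\theta$. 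Writing $g_n:=\Keq(\xi_n)$ and $g:=\Keq(\xi)$, the first part of Lemma \ref{lem:closuremsp} (applied, for $\mathbb P$-a.e.\ $\theta$, along a subsequence realising $\liminf_n g_n(\theta)$) yields $g\le\liminf_n g_n$ $\mathbb P$-a.e., while $\int g_n\,\d\mathbb P=\rmKe_q(\pi_n)\to\rmKe_q(\pi)=\int g\,\d\mathbb P<\infty$. Hence $g\wedge g_n\to g$ $\mathbb P$-a.e.\ (since $\liminf_n(g\wedge g_n)=g\wedge\liminf_n g_n=g$ and $g\wedge g_n\le g$), and as $0\le g\wedge g_n\le g\in L^1(\mathbb P)$ dominated convergence gives $\int g\wedge g_n\,\d\mathbb P\to\int g\,\d\mathbb P$; consequently $\int(g_n-g)^+\d\mathbb P=\int g_n\,\d\mathbb P-\int g\wedge g_n\,\d\mathbb P\to0$ and likewise $\int(g-g_n)^+\d\mathbb P\to0$, i.e.\ $g_n\to g$ in $L^1(\mathbb P)$. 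Passing to a (not relabelled) subsequence we may thus also assume $\Keq(\xi_n(\theta))\to\Keq(\xi(\theta))$ for $\mathbb P$-a.e.\ $\theta$, and then the second part of Lemma \ref{lem:closuremsp} gives $|\dot{\xi_n(\theta)}_\cdot|\to|\dot{\xi(\theta)}_\cdot|$ strongly in $L^q(0,1)$ for $\mathbb P$-a.e.\ $\theta$.

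For $\mathbb P$-a.e.\ $\theta$ the functions $t\mapsto G(\xi_n(\theta)_t)$ converge to $t\mapsto G(\xi(\theta)_t)$ uniformly on $[0,1]$ (as $G$ is Lipschitz and $\xi_n(\theta)\to\xi(\theta)$ uniformly), hence in $L^p(0,1)$, so together with the strong $L^q(0,1)$-convergence of the speeds,
\[
\int_0^1 G(\xi_n(\theta)_t)\,|\dot{\xi_n(\theta)}_t|\,\d t\ \xrightarrow[n\to\infty]{}\ \int_0^1 G(\xi(\theta)_t)\,|\dot{\xi(\theta)}_t|\,\d t .
\]
Moreover $\big|\int_0^1 G(\xi_n(\theta)_t)|\dot{\xi_n(\theta)}_t|\,\d t\big|\le\|G\|_{L^\infty}\,\Keq(\xi_n(\theta))^{1/q}$, and $\{\Keq(\xi_n)^{1/q}\}_n$ is bounded in $L^q(\mathbb P)$ (since $\int\Keq(\xi_n)\,\d\mathbb P\le K$), hence uniformly integrable on the finite measure space $(\Theta,\mathbb P)$; by Vitali's theorem we may integrate the displayed convergence against $\mathbb P$, and since $(\xi_n)_\sharp\mathbb P=\pi_n$, $\xi_\sharp\mathbb P=\pi$, Fubini's theorem turns this into \eqref{eq:limG} along the subsequence. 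Finally, every subsequence of $(\pi_n)$ again satisfies $\pi_n\weakto\pi$, \eqref{eq:unifcomp} and \eqref{eq:limskeq}, so the standard subsequence argument promotes this to convergence of the full sequence; the reduction above then yields \eqref{eq:limG} for every $G\in L^p(\mm)$.

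The heart of the argument is not the weak convergence of the plans but the passage from the \emph{integrated} energy inequality \eqref{eq:limskeq} to the \emph{pathwise strong} convergence of the metric speeds. This is precisely where the conservation of kinetic energy $\rmKe_q(\pi_n)\to\rmKe_q(\pi)$ must be combined with the second part of Lemma \ref{lem:closuremsp}, the bridge being the truncation trick that upgrades ``$g\le\liminf_n g_n$ $\mathbb P$-a.e.\ plus convergence of the means'' to ``$g_n\to g$ in $L^1(\mathbb P)$''. An alternative would be to lift the $\pi_n$ to $\cP\big(C([0,1],\X)\times L^q(0,1)\big)$ with $L^q(0,1)$ carrying its weak topology and to identify the limit plan by minimality of the metric speed, but the Skorokhod route seems cleaner.
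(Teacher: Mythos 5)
Your proof is correct, and it isolates exactly the same key mechanism as the paper's argument — using the energy bound \eqref{eq:limskeq} to upgrade uniform convergence of curves to \emph{strong} $L^q(0,1)$ convergence of the metric speeds via the second part of Lemma \ref{lem:closuremsp} — but the technical implementation is genuinely different. The paper builds the common probability space by hand: it picks optimal couplings in $\Opt_q(\pi,\pi_n)$ for a bounded version of $\sfd_{\rm sup}$, glues them via Kolmogorov's extension theorem into a measure on a countable product of copies of $C([0,1],\X)$, proves convergence in measure of the speeds on the product with $[0,1]$, upgrades it to strong $L^q$ convergence there by uniform convexity, and treats general $G\in L^p(\mm)$ at the end by a density argument (continuity of $G\mapsto \hat G_n$ in $L^p$, which is where \eqref{eq:unifcomp} enters), with the $L^1$-upgrade isolated as Lemma \ref{le:convl1}. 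You instead invoke the Skorokhod representation theorem to get a.e.\ uniform convergence of the curves in one stroke, reduce at the outset to $G\in\Lip_{bs}(\X)$ using the uniform Lipschitz dependence of both sides of \eqref{eq:limG} on $G$ (your use of \eqref{eq:unifcomp}), first upgrade \eqref{eq:limskeq} to genuine convergence $\rmKe_q(\pi_n)\to\rmKe_q(\pi)$ via \eqref{eq:KEqlsc} (the paper does this only implicitly and pathwise), reprove the content of Lemma \ref{le:convl1} inline by a truncation/dominated convergence trick, and conclude pathwise (uniform convergence of $G$ along the curves paired with $L^q$ convergence of the speeds) followed by Vitali/uniform integrability and the standard subsequence principle. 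Your route buys a shorter construction of the common space (no couplings, no Kolmogorov extension) and keeps the whole analysis at the level of individual curves; the paper's route avoids Skorokhod and the preliminary reduction by performing the $L^p$--$L^q$ pairing once on the product space. One cosmetic point: $\sup_n\rmKe_q(\pi_n)$ need not be finite, since \eqref{eq:limskeq} only controls the tail of the sequence, so your constant $K$ should be taken as a limsup (or finitely many indices discarded, which also fixes the claim that \emph{every} $\pi_n$ is a $q$-test plan); this is harmless because \eqref{eq:limG} is a statement about the tail, and the rest of your argument is unaffected.
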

\begin{proof} Let $\sfd' := \sfd_{\rm sup}\vee 1$ and $\cW_q$ be the $q$-Wasserstein distance induced by $\sfd'$. Thus  $\cW_q(\pi,\pi_n)\to 0$ as $n$ goes to infinity because $\sfd'$ is a bounded distance equivalent to the original one.

We write again for brevity $\Y:=C([0,1],\X)$ and consider, for every $n \in \N$, first the plans $\beta^n \in \Opt_q(\pi,\pi_n)$ and then, using repeatedly a gluing argument, the plan $\bbeta^n \in \Probabilities{\Y\times \Y^n} $ so that
\[ (P^{0,n})_{\sharp}\bbeta^n = \beta^n \quad \text{and} \quad  (P^{0,1,...,n-1})_{\sharp}\bbeta^n = \bbeta^{n-1}.\]
Kolmogorov's Theorem (see e.g.\ \cite[Section 7.7]{Bogachev07}) ensures the existence of $\bbeta \in \cP(\Y\times \Y^{\N})$ so that $(P^{0,1,...,n})_{\sharp}\bbeta = \bbeta^n$ for all $n \in \N$. Thanks to the assumptions, we can write
\[ 
0=\lim_{n\to \infty} \mathcal{W}^q_q(\pi,\pi_n)=\lim_n \int_{\Y\times\Y} (\sfd')^q(\gamma^0,\gamma^n)\, \d \beta^n(\gamma^0,\gamma^n) =\lim_n \int_{\Y\times\Y^{\N}}  (\sfd')^q(P^0(\ggamma),P^n(\ggamma))\, \d \bbeta (\ggamma).
\] 
Therefore, one gets that, up to a not relabeled subsequence, $P^n(\ggamma)\rightarrow P^0(\ggamma)$ uniformly for $\bbeta$-a.e. $\ggamma$. Now let $f_n(\ggamma,t):=|\dot\gamma_t^n|$ and $g_n(\ggamma):=\int_0^1f^q_n(\ggamma,t)\,\d t=\Keq(\gamma^n)$ and similarly $f,g$. Notice that \eqref{eq:limskeq} reads as $\lims\int g_n\,\d\bbeta\leq \int g\,\d\bbeta$ and the lower semicontinuity of the $q$-kinetic energy ensures that $\limi g_n(\ggamma)\geq g(\ggamma)$ for $\bbeta$-a.e.\ $\ggamma$. Hence the simple Lemma \ref{le:convl1} below ensures that $g_n\to g$ in $L^1(\bbeta)$ and thus, up to a non-relabeled subsequence,  also $\bbeta$-a.e.. Thus by Lemma \ref{lem:closuremsp}  we deduce that for $\bbeta$-a.e.\ $\ggamma$ we have $f_n(\ggamma,\cdot)\to f(\ggamma,\cdot)$ in $L^q(0,1)$ and thus also in measure. By Fubini's theorem we then see that $f_n\to f$ in measure (w.r.t.\ $\bbeta\times\mathcal L^1\restr{[0,1]}$). Now observe that \eqref{eq:limskeq} (and the identity $\|f_n\|_{L^q}=\Keq(\pi_n)$) guarantees that $(f_n)$ is bounded in $L^q(\bbeta\times \mathcal L^1\restr{[0,1]})$ and what we just proved shows that any weak limit must coincide a.e.\ with $f$, i.e.\ $f_n\weakto f$ in $L^q(\bbeta\times \mathcal L^1\restr{[0,1]})$. Using again \eqref{eq:limskeq} and the uniform convexity of $L^q$ we conclude that 
\begin{equation}
\label{eq:fnlq}
f_n\to f\quad\text{  in }L^q(\bbeta\times \mathcal L^1\restr{[0,1]}).
\end{equation}
Putting $\hat G_n(\ggamma,t):=G(\gamma^n_t)$ and analogously $\hat G(\ggamma,t):=G(\gamma_t)$,  we then see that to conclude it is sufficient to show that $\hat G_n\to \hat G$ in $L^p(\bbeta\times \mathcal L^1\restr{[0,1]})$. This is obvious by dominated convergence if $G\in C_b(\X)$, thus the conclusion will follow if we show that the linear maps $L^p(\X,\mm)\ni G\mapsto \hat G_n,\hat G\in L^p(\bbeta\times \mathcal L^1\restr{[0,1]})$ are uniformly continuous. This follows from \eqref{eq:unifcomp}, which give
\[
\iint_0^1|\hat G_n|^p\,\d t\,\d\bbeta=\int_0^1\int |G|^p(\cdot,t)\,\d\pi_n\,\d t=\int_0^1\int |G|^p\,\d (\e_t)_\sharp\pi_n\,\d t\leq C\int|G|^p\,\d\mm,
\]
and the analogous estimates for $\pi$. Since the result does not depend on the particular subsequence chosen, the conclusion follows.
\end{proof}
\begin{lemma}\label{le:convl1}
Let $\mu$ be a Borel probability measure on a Polish space $\Y$, and $f_n,f:\Y\to[0,\infty]$, $n\in\N$, Borel such that 
\[
f(y)\leq \limi_{n\to\infty}f_n(y),\qquad\text{ and }\qquad \lims_{n\to\infty}\int f_n\,\d\mu\leq  \int f\,\d\mu<\infty.
\]
Then $f_n\to f$ in $L^1(\mu)$.
\end{lemma}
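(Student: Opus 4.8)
The plan is to run the classical truncation argument based on the elementary identity $|a-b|=a+b-2(a\wedge b)$, valid for $a,b\in[0,\infty)$, combined with dominated convergence applied to the truncated sequence $g_n:=f_n\wedge f$.

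First I would carry out a harmless reduction ensuring all quantities are finite $\mu$-a.e.: since $\int f\,\d\mu<\infty$ we have $f<\infty$ $\mu$-a.e., and since $\lims_n\int f_n\,\d\mu\leq\int f\,\d\mu<\infty$ only finitely many indices can have $\int f_n\,\d\mu=\infty$; discarding them (irrelevant for $L^1$-convergence) we may assume $\int f_n\,\d\mu<\infty$, hence $f_n<\infty$ $\mu$-a.e., for every $n$. Then I would introduce $g_n:=f_n\wedge f$, which is Borel and satisfies $0\leq g_n\leq f$. Using the hypothesis $f\leq\limi_n f_n$ one checks pointwise $\mu$-a.e.\ that $\limi_n g_n\geq f$ (split according to whether infinitely many $f_n(y)$ exceed $f(y)$ or not), and together with $g_n\leq f$ this gives $g_n\to f$ pointwise $\mu$-a.e. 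Since $0\leq g_n\leq f\in L^1(\mu)$, dominated convergence yields $\int g_n\,\d\mu\to\int f\,\d\mu$.

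To finish I would use the identity $|f_n-f|=f_n+f-2g_n$ ($\mu$-a.e., all terms finite by the reduction), integrate, and take the limsup:
\[
\lims_{n\to\infty}\int|f_n-f|\,\d\mu=\lims_{n\to\infty}\Big(\int f_n\,\d\mu+\int f\,\d\mu-2\int g_n\,\d\mu\Big)\leq\int f\,\d\mu+\int f\,\d\mu-2\int f\,\d\mu=0,
\]
where the inequality uses $\lims_n\int f_n\,\d\mu\leq\int f\,\d\mu$ together with $\int g_n\,\d\mu\to\int f\,\d\mu$. This gives $f_n\to f$ in $L^1(\mu)$.

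I do not expect a genuine obstacle here; the argument is elementary. The only point requiring a little care is the preliminary reduction to the case where $f$ and the $f_n$ are finite $\mu$-a.e.\ (and $\int f_n\,\d\mu<\infty$), so that the scalar identity $|a-b|=a+b-2(a\wedge b)$ may be applied and integrated without running into $\infty-\infty$ ambiguities.
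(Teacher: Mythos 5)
Your argument is correct, including the two points that need care: the preliminary reduction to $f<\infty$ $\mu$-a.e.\ and $\int f_n\,\d\mu<\infty$ for all large $n$ (legitimate, since discarding finitely many indices does not affect $L^1$-convergence), and the pointwise verification that $f\leq\limi_n f_n$ forces $f_n\wedge f\to f$ $\mu$-a.e. However, it is not the route taken in the paper. You work with the truncations $g_n:=f_n\wedge f$, prove $\int g_n\,\d\mu\to\int f\,\d\mu$ by dominated convergence, and conclude via the lattice identity $|f_n-f|=f_n+f-2(f_n\wedge f)$ together with $\lims_n\int f_n\,\d\mu\leq\int f\,\d\mu$; this is essentially the classical Scheff\'e-type argument, and its only cost is the finiteness reduction needed to integrate the identity without $\infty-\infty$ ambiguities, which you address. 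The paper instead sets $g:=\limi_n f_n$ and $g_n:=\inf_{k\geq n}f_k$, gets $\int g\,\d\mu<\infty$ from Fatou, $\|g_n-g\|_{L^1(\mu)}\to 0$ from monotone convergence, then estimates $\lims_n\|f_n-g_n\|_{L^1(\mu)}\leq\int (f-g)\,\d\mu\leq 0$, which simultaneously yields $f=g$ $\mu$-a.e.\ and, via the triangle inequality, the conclusion. The paper's decomposition compares $f_n$ with the running infima rather than with $f$ itself, and as a by-product identifies $f$ with $\limi_n f_n$ $\mu$-a.e.; your version is marginally more direct once the finiteness reduction is in place, and the two proofs are of comparable length and generality.
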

\begin{proof}
Let $g:= \limi_{n\to\infty}f_n$ and $g_n:=\inf_{k\geq n}f_k$. Fatou's lemma and the assumptions give that $\int g\,\d\mm<\infty$, while  the monotone convergence theorem ensures that $\int g_n\,\d\mm\to\int g\,\d\mm $. Hence  $\|g_n- g\|_{L^1(\mu)}\to 0$. Also, we have
\[
\lims_{n\to\infty}\|f_n-g_n\|_{L^1(\mu)}=\lims_{n\to\infty}\int f_n-g_n\,\d\mu\leq \int f-g\,\d\mu\leq 0,
\]
forcing in particular $f=g$ $\mu$-a.e.. The conclusion follows.
\end{proof}
Thanks to this approximation result we get the following:
\begin{proposition} \label{prop:sobolevPi}
Let $\Xdm$ be a BIP-space, $p \in (1,\infty)$, $f\colon \X \rightarrow \R$ Borel and $G \in L^p(\X)$ positive. Then, the following are equivalent:
\begin{itemize}
\item[{\rm i)}] $f \in S^p(\X)$ and $G$ is a $p$-weak upper gradient;
\item[{\rm ii)}] the inequality
\begin{equation}
\label{eq:persob}
\int |f(\gamma_1)-f(\gamma_0)|\, \d \pi \le \iint_0^1 G(\gamma_t)\msp \, \d t \d \pi
\end{equation}
holds for any $\pi \in {\sf Geod}_q(\X)$.
\end{itemize}
\end{proposition}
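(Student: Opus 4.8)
The plan is to prove the two implications separately, (i)$\Rightarrow$(ii) being immediate: by \eqref{eq:geodtest} every $\pi\in{\sf Geod}_q(\X)$ is in particular a $q$-test plan, so \eqref{eq:persob} is just \eqref{def:Spineq} read on this subclass of test plans. The real content is the converse (ii)$\Rightarrow$(i), and I would begin with two reductions. First, it is enough to treat bounded Borel $f$: if the conclusion is known for every truncation $f_k:=(-k)\vee(f\wedge k)$ then, since $f_k(x)\to f(x)$ for each $x$, Fatou's lemma applied to $|f_k(\gamma_1)-f_k(\gamma_0)|\to|f(\gamma_1)-f(\gamma_0)|$ promotes the inequality $\int|f_k(\gamma_1)-f_k(\gamma_0)|\,\d\pi\le\iint_0^1 G(\gamma_t)\msp\,\d t\,\d\pi$ to the one for $f$, for every $q$-test plan $\pi$. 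Second, it is enough to treat $q$-test plans $\pi$ with bounded support: a general one is concentrated on an increasing union $\cup_k\Gamma_k$ of compact sets, the rescalings $\pi(\Gamma_k)^{-1}\pi\restr{\Gamma_k}$ are compactly supported $q$-test plans by \eqref{eq:pirescal}, and \eqref{def:Spineq} for each of them passes to $\pi$ by monotone convergence.

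Next I would show that \eqref{eq:persob} holds for every polygonal plan $\pi\in{\sf PolGeo}_q(\X)$. Writing $\pi=\sum_{i=1}^N\pi\restr{A_i}$ as in the definition and setting $\beta^{i,j}:=\pi(A_i)^{-1}({\sf Restr}_{j/m}^{(j+1)/m})_\sharp(\pi\restr{A_i})\in{\sf Geod}_q(\X)$, I would apply the hypothesis (ii) to each $\beta^{i,j}$ and undo the time-rescaling using that ${\sf Restr}_s^t$ multiplies the metric speed by $|t-s|$ and rescales the time integral accordingly (the same change of variables behind \eqref{eq:pirestr}); this gives $\int_{A_i}|f(\gamma_{(j+1)/m})-f(\gamma_{j/m})|\,\d\pi\le\iint_{j/m}^{(j+1)/m} G(\gamma_t)\msp\,\d t\,\d(\pi\restr{A_i})$. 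Summing over $j$, using $\sum_j|f(\gamma_{(j+1)/m})-f(\gamma_{j/m})|\ge|f(\gamma_1)-f(\gamma_0)|$, and then over $i$, yields \eqref{eq:persob} for $\pi$.

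Then I would pass from polygonal plans to a general $q$-test plan $\pi$ (bounded support, $f$ bounded) by feeding $\pi$ to Lemma \ref{lem:approxPol} and taking the plans $\pi_n$ and $\pi_{n,m}\in{\sf PolGeo}_q(\X)$ it provides; each $\pi_n$ is again a $q$-test plan (same compression constant, no larger kinetic energy than $\pi$). On the right-hand side of \eqref{eq:persob}, items i) and ii) of Lemma \ref{lem:approxPol} supply exactly the hypotheses of Lemma \ref{lem:limPol} for the sequences $(\pi_{n,m})_m\weakto\pi_n$ and $(\pi_n)_n\weakto\pi$, whence $\iint_0^1 G(\gamma_t)\msp\,\d t\,\d\pi_{n,m}\to\iint_0^1 G(\gamma_t)\msp\,\d t\,\d\pi_n$ as $m\to\infty$ and then $\to\iint_0^1 G(\gamma_t)\msp\,\d t\,\d\pi$ as $n\to\infty$. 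Combining with the polygonal inequality of the previous step for $\pi_{n,m}$, the argument is finished once one has the matching lower semicontinuity of the left-hand side, namely $\int|f(\gamma_1)-f(\gamma_0)|\,\d\pi_n\le\liminf_m\int|f(\gamma_1)-f(\gamma_0)|\,\d\pi_{n,m}$ and likewise in $n$.

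This last point is the main obstacle: $f$ is merely Borel, so $\gamma\mapsto|f(\gamma_1)-f(\gamma_0)|$ need not be lower semicontinuous, and weak convergence $\pi_{n,m}\weakto\pi_n$ by itself does not control $\int|f(\gamma_1)-f(\gamma_0)|\,\d\pi_{n,m}$. To get around it I would use that the time-$0$ and time-$1$ marginals of all the approximating plans coincide with the fixed finite measures $(\e_0)_\sharp\pi$ and $(\e_1)_\sharp\pi$ (the approximants are built by freezing the marginals of $\pi$ on a time grid containing $0$ and $1$; alternatively one uses that the weakly convergent family is tight and uniformly compressed, so these marginals concentrate, up to arbitrarily little mass, in a single compact set of finite $\mm$-measure). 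Lusin's theorem then produces, for each $\eps>0$, a bounded continuous $\tilde f_\eps$ equal to $f$ off a set charged $O(\eps)$ by $(\e_0)_\sharp\pi$ and $(\e_1)_\sharp\pi$; replacing $f$ by $\tilde f_\eps$ changes each integral $\int|f(\gamma_1)-f(\gamma_0)|\,\d(\cdot)$ by $O(\eps)$ uniformly over the family, whereas $\mu\mapsto\int|\tilde f_\eps(\gamma_1)-\tilde f_\eps(\gamma_0)|\,\d\mu$ is weakly continuous; letting $\eps\to0$ yields the two $\liminf$-inequalities and closes the argument. Everything else is routine bookkeeping with the lemmas already established.
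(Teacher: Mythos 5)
Your proposal is correct, and its skeleton is the same as the paper's: the trivial direction via \eqref{eq:geodtest}, the verbatim polygonal step (restriction/rescaling onto a uniform grid plus the triangle inequality), and the passage from ${\sf PolGeo}_q(\X)$ to an arbitrary $q$-test plan through Lemma \ref{lem:approxPol} for the construction and Lemma \ref{lem:limPol} for the convergence of the right-hand side. The genuine difference is in the step you correctly single out as the main obstacle, the limit of the left-hand side for a merely Borel $f$. The paper truncates $f$ inside the approximation argument, first reduces to the case where $f(\gamma_1)-f(\gamma_0)$ has constant sign $\pi$-a.e.\ (splitting $\pi$ into $A^\pm$, or reversing curves via ${\sf Restr}_1^0$), and then exploits that $(\e_i)_\sharp\pi_{n,m}=(\e_i)_\sharp\pi_n=(\e_i)_\sharp\pi$ for $i=0,1$, so that $\int f^k(\gamma_1)-f^k(\gamma_0)\,\d\pi_{n,m}$ is \emph{literally equal} to the corresponding integral for $\pi$ and no semicontinuity of the integrand is ever needed; the absolute value is recovered only through the sign reduction. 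You instead truncate at the outset (note, in passing, that hypothesis ii) does pass to the truncations since $|f^k(\gamma_1)-f^k(\gamma_0)|\le|f(\gamma_1)-f(\gamma_0)|$ -- you use this implicitly) and handle the absolute-value integrand directly via Lusin--Tietze: a continuous bounded surrogate $\tilde f_\eps$ whose exceptional set is small for $(\e_0)_\sharp\pi+(\e_1)_\sharp\pi$, hence uniformly small for all approximants precisely because their endpoint marginals are frozen, followed by weak continuity for the surrogate. Both arguments thus rest on the same structural fact (the approximants freeze the endpoint marginals of $\pi$); your route avoids the sign-splitting and gives actual convergence of the left-hand side at the price of invoking Lusin and Tietze, while the paper's route stays within elementary marginal bookkeeping. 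Your preliminary reduction to compactly supported test plans is harmless but not needed: Lemma \ref{lem:approxPol} already covers general $q$-test plans, and Lusin's theorem only requires the endpoint marginals to be finite Borel measures on a Polish space.
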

\begin{proof}
The implication {\rm i)} $\Rightarrow$ {\rm ii)} is obvious, so we are left to show the converse. We start  noticing that \eqref{eq:persob} holds for any $\pi\in  {\sf PolGeo}_q(\X)$. Indeed, for $A_i,\alpha_i,N,m$ as in the definition of  ${\sf PolGeo}_q(\X)$ we have
\[
\begin{split}
\int |f(\gamma_1)-f(\gamma_0)|\, \d \pi 
&\leq \sum_{i=1}^N\sum_{j=0}^{m-1}\int |f(\gamma_{\frac {j+1}m})-f(\gamma_{\frac jm})|\, \d \pi\restr{A_i} \\
&=\sum_{i=1}^N\sum_{j=0}^{m-1}\alpha_i\int |f(\gamma_1)-f(\gamma_0)|\, \d \big(\alpha_i^{-1}({\sf Restr}_{\frac jm}^{\frac{j+1}m})_\sharp(\pi\restr{A_i})\big)\\
&\stackrel{*}\leq\sum_{i=1}^N\sum_{j=0}^{m-1}\alpha_i \iint_0^1 G(\gamma_t)\msp \, \d t \d \big(\alpha_i^{-1}({\sf Restr}_{\frac jm}^{\frac{j+1}m})_\sharp(\pi\restr{A_i})\big)\\
&=\iint_0^1 G(\gamma_t)\msp \, \d t \d \pi
\end{split}
\]
having used the fact that $\alpha_i^{-1}({\sf Restr}_{\frac jm}^{\frac{j+1}m})_\sharp(\pi\restr{A_i})$ is in ${\sf Geod}_q(\X)$ and the assumption {\rm ii)} in the starred inequality.

The conclusion now comes by approximation. Let $\pi$ be an arbitrary $q$-test plan and assume for the moment that $f(\gamma_1)-f(\gamma_0)$ has the same sign for $\pi$-a.e.\ $\gamma$, say non-negative (otherwise replace $\pi$ with $({\sf Restr}_1^0)_\sharp\pi$ and notice that \eqref{eq:persob} is unaffected). Let $(\pi_{n,m}),(\pi_n)$ be given by Lemma  \ref{lem:approxPol}, put $f^k:=(-k)\vee f\wedge k$ for $k\in\N$ and notice that $f^k(\gamma_1)-f^k(\gamma_0)\geq 0$ for $\pi$-a.e.\ $\gamma$. The fact that $f^k\in L^\infty(\X)$ and the compression bounds given by Lemma \ref{lem:approxPol} give that
\[
\lim_{n\to\infty }\lim_{m\to\infty }\int f^k\,\d(\e_t)_\sharp\pi_{n,m}=\lim_{n\to\infty }\int f^k\,\d(\e_t)_\sharp\pi_{n}=\int f^k\,\d(\e_t)_\sharp\pi,\qquad\forall t\in[0,1],
\]
therefore by monotone convergence we get
\[
\begin{split}
\int |f(\gamma_1)-f(\gamma_0)|\,\d\pi&=\lim_{k\to\infty}\int |f^k(\gamma_1)-f^k(\gamma_0)|\,\d\pi\\
&=\lim_{k\to\infty}\int f^k(\gamma_1)-f^k(\gamma_0)\,\d\pi\\
&=\lim_{k\to\infty}\Big(\int f^k\,\d(\e_1)_\sharp\pi-\int f^k\,\d(\e_0)_\sharp\pi\Big)\\
&=\lim_{k\to\infty}\lim_{n\to\infty }\lim_{m\to\infty }\Big(\int f^k\,\d(\e_1)_\sharp\pi_{n,m}-\int f^k\,\d(\e_0)_\sharp\pi_{n,m}\Big)\\
&\leq\limi_{k\to\infty}\limi_{n\to\infty }\limi_{m\to\infty }\int |f^k(\gamma_1)-f^k(\gamma_0)|\,\d \pi_{n,m}\\
&\leq\limi_{n\to\infty }\limi_{m\to\infty }\int |f(\gamma_1)-f(\gamma_0)|\,\d \pi_{n,m}\\
&\leq\limi_{n\to\infty }\limi_{m\to\infty }\iint_0^1 G(\gamma_t)\msp \, \d t \d \pi_{n,m},
\end{split}
\]
where in the last step we used the fact that $\pi_{n,m}\in {\sf PolGeo}_q(\X)$ and what previously proved. To conclude we apply Lemma \ref{lem:limPol} first as $m\to\infty$ and then as $n\to\infty$. 

If both $A^+:=\{\gamma: f(\gamma_1)-f(\gamma_0)\geq 0\}$ and $A^-:=\{\gamma: f(\gamma_1)-f(\gamma_0)< 0\}$ have positive $\pi$-measure, we apply the above to the $q$-test plans $\pi^\pm:=\pi(A^\pm)^{-1}\pi\restr{A^\pm}$ observing that
\[
\int |f(\gamma_1)-f(\gamma_0)|\,\d\pi=\pi(A^+) \int |f(\gamma_1)-f(\gamma_0)|\,\d\pi^+-\pi(A^-)\int| f(\gamma_1)-f(\gamma_0)|\,\d\pi^-.
\]
The conclusion follows.
\end{proof}
We now come to the main result of the section, which is also the main reason behind the definition of BIP spaces:

\begin{theorem}\label{thm:pind}
Let $\Xdm$ be a BIP-space and let $p_1,p_2 \in (1,\infty)$. Suppose $f \in S^{p_1}_{loc}(\X)$ is such that $|Df|_{p_1} \in L^{p_2}_{loc}(\mm)$. 

Then, $f \in S^{p_2}_{loc}(\X)$ and 
\[ |Df|_{p_1}=|Df|_{p_2}, \qquad \mm\text{-a.e.}.\]
\end{theorem}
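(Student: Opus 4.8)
The plan is to first prove the \emph{global} version of the statement — that $f\in S^{p_1}(\X)$ with $|Df|_{p_1}\in L^{p_2}(\mm)$ implies $f\in S^{p_2}(\X)$ with $|Df|_{p_1}=|Df|_{p_2}$ $\mm$-a.e.\ — and then to deduce the local statement by a truncation and cut-off argument. The key structural input is the characterization in Proposition \ref{prop:sobolevPi} combined with the fact \eqref{eq:geodtest} that every element of ${\sf Geod}_q(\X)$ is an $\infty$-test plan, hence a $q'$-test plan for \emph{every} $q'\in(1,\infty)$: morally, the plan-theoretic side of that characterization does not see the exponent.

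Here is the global argument. Note first that $f\in S^{p_1}(\X)$ already forces $|Df|_{p_1}\in L^{p_1}(\mm)$, so $|Df|_{p_1}\in L^{p_1}(\mm)\cap L^{p_2}(\mm)$. Since $|Df|_{p_1}$ is a $p_1$-weak upper gradient of $f$, inequality \eqref{def:Spineq} holds along every $q_1$-test plan, in particular along every $\pi\in{\sf Geod}_{q_2}(\X)$, these being $q_1$-test plans by \eqref{eq:geodtest}; this is exactly condition {\rm ii)} of Proposition \ref{prop:sobolevPi} with exponent $p_2$ and $G=|Df|_{p_1}$, so as $|Df|_{p_1}\in L^{p_2}(\mm)$ we get $f\in S^{p_2}(\X)$ with $|Df|_{p_1}$ a $p_2$-weak upper gradient, whence $|Df|_{p_2}\le |Df|_{p_1}$ $\mm$-a.e. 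For the converse inequality we argue symmetrically from $f\in S^{p_2}(\X)$: now $|Df|_{p_2}$ is a $p_2$-weak upper gradient, so \eqref{def:Spineq} holds along every $q_2$-test plan and in particular along every $\pi\in{\sf Geod}_{q_1}(\X)$; since $|Df|_{p_2}\le|Df|_{p_1}\in L^{p_1}(\mm)$ we have $|Df|_{p_2}\in L^{p_1}(\mm)$, so Proposition \ref{prop:sobolevPi} with exponent $p_1$ and $G=|Df|_{p_2}$ gives that $|Df|_{p_2}$ is a $p_1$-weak upper gradient of $f$, hence $|Df|_{p_1}\le |Df|_{p_2}$ $\mm$-a.e. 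Combining the two inequalities proves the global statement.

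To pass to the local statement, fix $\eta\in\Lip_{bs}(\X)$ and $k\in\N$, pick an auxiliary $\tilde\eta\in\Lip_{bs}(\X)$ with $0\le\tilde\eta\le 1$ and $\tilde\eta\equiv 1$ on $\spt\eta$, and set $h:=\eta f^k=\eta\,(\tilde\eta f^k)$, where $f^k:=(-k)\vee f\wedge k$. By Definition \ref{def:Sploc} one has $\tilde\eta f^k\in S^{p_1}(\X)\cap L^\infty(\mm)$ with $|D(\tilde\eta f^k)|_{p_1}\le |Df|_{p_1}$ $\mm$-a.e.\ on $\{\tilde\eta=1\}\supseteq\spt\eta$, while $\eta\in\Lip_{bs}(\X)\subset S^{p_1}(\X)\cap L^\infty(\mm)$ with $|D\eta|_{p_1}\le\lip\eta\le\Lip(\eta)\,\nchi_{\spt\eta}$. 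The Leibniz rule \eqref{eq:leibwug}, together with locality (which makes $|Dh|_{p_1}$ vanish $\mm$-a.e.\ outside $\spt\eta$), then gives
\[
|Dh|_{p_1}\ \le\ \big(\|\eta\|_{L^\infty(\mm)}\,|Df|_{p_1}+k\,\Lip(\eta)\big)\,\nchi_{\spt\eta}\qquad\mm\text{-a.e.},
\]
and since $\spt\eta$ is bounded, $\mm$ boundedly finite, and $|Df|_{p_1}\in L^{p_1}_{loc}(\mm)\cap L^{p_2}_{loc}(\mm)$, the right-hand side lies in $L^{p_1}(\mm)\cap L^{p_2}(\mm)$. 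Applying the global statement to $h$ yields $h\in S^{p_2}(\X)$ with $|Dh|_{p_1}=|Dh|_{p_2}$ $\mm$-a.e. On $\{\eta=1\}$ this reads $|D(\eta f^k)|_{p_2}=|D(\eta f^k)|_{p_1}\le|Df|_{p_1}$, which is precisely the condition of Definition \ref{def:Sploc} certifying $f\in S^{p_2}_{loc}(\X)$ with admissible gradient $|Df|_{p_1}$, and on $\{\eta=1\}\cap\{|f|<k\}$ it gives $|Df|_{p_2}=|D(\eta f^k)|_{p_2}=|D(\eta f^k)|_{p_1}=|Df|_{p_1}$. Letting $\eta$ run over a sequence of cut-offs exhausting $\X$ and $k\to\infty$ completes the proof.

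I expect the only genuinely delicate point to lie in this last, localization step: one has to insert the auxiliary cut-off $\tilde\eta$ so that $\tilde\eta f^k$ is a bona fide element of $S^{p_1}(\X)$ before invoking the Leibniz rule, observe that truncation does not enlarge the weak upper gradient (so the $L^{p_2}_{loc}$-bound on $|Df|_{p_1}$ is inherited by $|D(\eta f^k)|_{p_1}$), and note that the Leibniz cross term $f^k|D\eta|_{p_1}$ is harmless exactly because $f^k$ is bounded and $|D\eta|_{p_1}\le\lip\eta$ is bounded with bounded support — which is also why the truncation cannot be dropped, since $\eta f$ itself need not belong to $S^{p_1}(\X)$. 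By contrast, the conceptual heart is the single observation that the plan side of Proposition \ref{prop:sobolevPi} is exponent-blind, because ${\sf Geod}_q(\X)$ consists of $\infty$-test plans for every $q$.
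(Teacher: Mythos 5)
Your proof is correct and follows essentially the same route as the paper: the global step rests on Proposition \ref{prop:sobolevPi} together with \eqref{eq:geodtest}, and the local statement is then obtained by truncation, cut-off and the Leibniz rule \eqref{eq:leibwug}, with equality coming from symmetry in $p_1,p_2$. The only cosmetic differences are that you prove the two inequalities symmetrically already at the global level (the paper localizes one inequality and then swaps the exponents) and that you insert the auxiliary cut-off $\tilde\eta$ to justify the Leibniz rule, where the paper simply remarks that \eqref{eq:leibwug} is trivially valid for locally Sobolev functions.
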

Note: by a truncation and cut-off argument it is easy to see that the conclusion of this theorem is in fact equivalent to the $p$-independence of weak upper gradients in the strong sense.
\begin{proof} Assume for a moment that $f \in S^{p_1}(\X)$ and $|Df|_{p_1} \in L^{p_2}(\mm)$. Then we know that \eqref{eq:persob} holds for every $q_1$-test plan with $G:=|Df|_{p_1}$, and thus, recalling \eqref{eq:geodtest}, also for every plan in ${\sf Geod}_{q_2}(\X)$. Hence Proposition \ref{prop:sobolevPi} tells that $f \in S^{p_2}(\X)$  with $|Df|_{p_2}\leq |Df|_{p_1}$ $\mm$-a.e..

In the general case we pick $k\in\N$  and $\eta\in\Lip_{bs}(\X)$, define the truncated function $f^k:=(-k)\vee f\wedge k$ and then consider $\eta f^k$. The Leibniz rule \eqref{eq:leibwug} (which is trivially valid also for locally Sobolev functions) gives 
\begin{equation}
\label{eq:p1p2}
|D (\eta f^k)|_{p_1}\leq |\eta||D f|_{p_1}+|D\eta|k\quad\in L^{p_1}\cap L^{p_2}(\mm).
\end{equation}
Thus $\eta f^k\in S^{p_1}(\X)$ with $|D (\eta f^k)|_{p_1}\in L^{p_2}(\mm)$ and the previous argument applies to conclude that $\eta f^k\in S^{p_2}(\X)$ with $|D (\eta f^k)|_{p_2}$ bounded by the right hand side of \eqref{eq:p1p2}. By the very Definition \ref{def:Sploc} this means that $f\in  S^{p_2}_{loc}(\X)$ with $|D f|_{p_2}\leq |D f|_{p_1}$ $\mm$-a.e..

Now we can swap $p_1$ and $p_2$ to get that equality, and thus the conclusion, holds.
\end{proof}


\subsection{Stability of (BIP)}
We aim at proving the stability of the (BIP) under pointed measured Gromov Hausdorff convergence. We recall here some basic facts about limits of metric measure spaces as introduced in \cite{Gromov07} (see also \cite{Sturm06I}, here we follow the extrinsic approach described in \cite{GMS15}). A pointed metric measure space, is a quadruple $\Xdmx$, where $\Xdm$ is a metric measure space and $x  \in \X$. Also, we put $\bar{\N}:=\N\cup\{\infty\}$.
\begin{definition}[pmGH-convergence]\label{def:pmGH}
Let $\Xdmxn$, $n \in \bar{\N}$, be a sequence of pointed metric measure spaces. We say that that $\Xdmxn$ \emph{pointed-measure Gromov Hausdorff-converges} (pmGH-converges for short) to $\Xdmxinf$ provided there exists a complete and separable metric measure space $(\Z,\sfd)$ and isometric embeddings
\[
\begin{split}
\iota_n &\colon (\X_n,\sfd_n) \to (\Z,\sfd_\Z), \\
\iota_\infty &\colon (\X_\infty,\sfd_\infty) \to (\Z,\sfd_\Z),
\end{split}
\]
such that $(\iota_n)(x_n) \to \iota_\infty(x_\infty)$ and
\[ (\iota_n)_\sharp \mm_n \weakto (\iota_\infty)_\sharp \mm_\infty, \qquad \text{in duality with }C_{bs}(\Z).\]
In this case we  write $\X_n \overset{pmGH}{\rightarrow} \X_\infty$.
\end{definition}
In what follows we shall identify the spaces $\X_n$, $n\in\bar\N$, with their isomorphic images in $\Z$.

The stability of the (BIP) is a consequence of the following simple compactness  property of optimal geodesic test plans under pmGH-convergence:
\begin{lemma}\label{lem:pipmGH} Let $\X_n \overset{pmGH}{\rightarrow} \X_\infty$ as in Definition \ref{def:pmGH},   $R>0$ and $q \in (1,\infty)$. For every $n \in \N$, and $i=0,1$, let $\mu_i^n \in \cP_q(\X_n)$ be with \emph{supp}$(\mu^n_i) \subseteq B_R(x_n)$ and $\pi^n \in \OptGeo_q(\mu_0^n,\mu_1^n)$. Assume that $\lims_{n\to\infty}\Comp(\pi^n) <\infty$.

Then $(\pi^n)\subset \cP(C([0,1],\Z))$ is tight and for any weak limit $\pi$ along some subsequence $n_k\uparrow+\infty$ we have 
\begin{itemize}
\item[i)] $\mu^{n_k}_i\weakto  \mu_i$, $i=0,1$, for some $\mu_i\in \cP(\X_\infty)\subset\cP(\Z)$ with support contained in $\bar B_R(x_\infty)$,
\item[ii)] $\pi\in \OptGeo_q(\mu_0,\mu_1)$ and $\lim_{n\to \infty}\rmKe_q(\pi^n) = \rmKe_q(\pi)$;
\item[iii)]  $\Comp(\pi)\leq \limi_{n\to\infty}\Comp(\pi^n)$.
\end{itemize}
\end{lemma}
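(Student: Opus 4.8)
The plan is to realize everything inside the common ambient space $\Z$ and argue by compactness. Write $\Y:=C([0,1],\Z)$, and, after discarding the finitely many indices for which $\Comp(\pi^n)$ is too large, set $C:=\sup_n\Comp(\pi^n)<\infty$. Since $\supp\mu_i^n\subseteq B_R(x_n)$ and $\pi^n\in\OptGeo_q(\mu_0^n,\mu_1^n)$, every $\gamma\in\supp(\pi^n)$ is a constant speed geodesic with $\sfd(\gamma_0,\gamma_1)<2R$, hence it is $(2R)$-Lipschitz and takes values in $\bar B_{3R}(x_n)$; moreover $\rmKe_q(\pi^n)=W_q^q(\mu_0^n,\mu_1^n)\leq(2R)^q$ by \eqref{eq:costgeo}. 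Because $x_n\to x_\infty$, for $n$ large all these balls sit inside the fixed bounded set $B:=\bar B_{3R+1}(x_\infty)$.

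First I would prove tightness of $(\pi^n)$ in $\cP(\Y)$. From $(\e_t)_\sharp\pi^n\leq\Comp(\pi^n)\mm_n\leq C\mm_n$, the fact that $(\e_t)_\sharp\pi^n$ is concentrated on $B$, and the standard property of pmGH-convergence that $\{\mm_n\restr{B}\}_n$ is a tight family of measures on $\Z$ (see \cite{GMS15}), one gets that $\{(\e_t)_\sharp\pi^n:t\in[0,1],\,n\in\N\}$ is tight in $\cP(\Z)$. Prokhorov's Theorem \ref{thm:prokh} then yields $\psi\colon\Z\to[0,\infty]$ with compact sublevels and $\sup_{t,n}\int\psi\,\d(\e_t)_\sharp\pi^n<\infty$; exactly as in the proof of Lemma \ref{lem:approxPol}, the functional $\Psi(\gamma):=\int_0^1\psi(\gamma_t)+|\dot\gamma_t|^q\,\d t$ (set to $+\infty$ on non-$AC^q$ curves) has compact sublevels in $\Y$ by \cite[Lemma 5.8]{GMS15}, and $\sup_n\int\Psi\,\d\pi^n\leq\sup_{t,n}\int\psi\,\d(\e_t)_\sharp\pi^n+\sup_n\rmKe_q(\pi^n)<\infty$, so a further application of Prokhorov gives the tightness. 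Fix a subsequence $n_k$ with $\pi^{n_k}\weakto\pi$.

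For part i): $\mu_i^{n_k}=(\e_i)_\sharp\pi^{n_k}\weakto(\e_i)_\sharp\pi=:\mu_i$ since $\e_i$ is continuous; testing against functions in $C_b(\Z)$ supported off $\bar B_{R+\eps}(x_\infty)$ and letting $\eps\downarrow0$ shows $\supp\mu_i\subseteq\bar B_R(x_\infty)$; and for $0\le\varphi\in C_{bs}(\Z)$ one has $\int\varphi\,\d\mu_i=\lim_k\int\varphi\,\d\mu_i^{n_k}\leq C\lim_k\int\varphi\,\d\mm_{n_k}=C\int\varphi\,\d\mm_\infty$, whence $\mu_i\leq C\mm_\infty$ and in particular $\mu_i\in\cP(\X_\infty)$ (identifying $\X_\infty$ with its image in $\Z$). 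For part iii): choosing a further, not relabeled, subsequence along which $\Comp(\pi^{n_k})\to\liminf_n\Comp(\pi^n)=:L$, the very same computation with $(\e_t)_\sharp$ in place of $(\e_i)_\sharp$ (valid since $(\e_t)_\sharp\pi^{n_k}\le\Comp(\pi^{n_k})\mm_{n_k}$ for every $t$) gives $(\e_t)_\sharp\pi\leq L\mm_\infty$ for all $t\in[0,1]$, i.e.\ $\Comp(\pi)\leq L$.

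The core is part ii). Since $\Geo(\Z)$ is closed in $(\Y,\sfd_{\rm sup})$ (the defining metric identities pass to uniform limits) and each $\pi^{n_k}$ is concentrated on $\Geo(\X_{n_k})\subseteq\Geo(\Z)$, the limit $\pi$ is concentrated on $\Geo(\Z)$; combining with $(\e_t)_\sharp\pi\le C\mm_\infty$ for $t$ in a countable dense subset of $[0,1]$ and continuity of curves, $\pi$ is in fact concentrated on $\Geo(\X_\infty)$. Now $(\e_0,\e_1)_\sharp\pi\in\Pi(\mu_0,\mu_1)$ has cost $\int\sfd^q(\gamma_0,\gamma_1)\,\d\pi=\int\Keq(\gamma)\,\d\pi=\rmKe_q(\pi)$; by \eqref{eq:KEqlsc}, $\rmKe_q(\pi)\le\liminf_k\rmKe_q(\pi^{n_k})=\liminf_kW_q^q(\mu_0^{n_k},\mu_1^{n_k})$, and since the $\mu_i^{n_k}$ have uniformly bounded support, $\mu_i^{n_k}\weakto\mu_i$ forces $W_q(\mu_i^{n_k},\mu_i)\to0$ by \eqref{eq:Wqweak}, hence $W_q^q(\mu_0^{n_k},\mu_1^{n_k})\to W_q^q(\mu_0,\mu_1)$. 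Thus $\rmKe_q(\pi)\le W_q^q(\mu_0,\mu_1)$, while admissibility of $(\e_0,\e_1)_\sharp\pi$ gives the reverse inequality; the two together say that $(\e_0,\e_1)_\sharp\pi$ is optimal, i.e.\ $\pi\in\OptGeo_q(\mu_0,\mu_1)$, and they also pin down $\rmKe_q(\pi^{n_k})\to\rmKe_q(\pi)$. I expect the only genuinely delicate point to be the tightness step, specifically the invocation of tightness of $\{\mm_n\restr{B}\}_n$ coming from the pmGH set-up; everything afterwards is a routine combination of lower semicontinuity of the kinetic energy, continuity of $W_q$ on uniformly bounded families, and stability of the compression bound under weak convergence.
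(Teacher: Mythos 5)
Your argument is correct and follows essentially the same route as the paper: tightness of $(\pi^n)$ from the uniform compression bound combined with tightness of the (cut-off/normalized) reference measures and the lifted functional $\Psi$ with compact sublevels, then i) and iii) by passing the bound $(\e_t)_\sharp\pi^n\leq \Comp(\pi^n)\mm_n$ to the limit, and ii) via lower semicontinuity \eqref{eq:KEqlsc} of the kinetic energy, the $W_q$-convergence of the uniformly boundedly supported marginals, and admissibility of $(\e_0,\e_1)_\sharp\pi$, which is exactly the paper's use of \eqref{eq:costgeo}. The one caveat is your claim that a further subsequence of $(n_k)$ realizes $\liminf_{n\to\infty}\Comp(\pi^n)$ — in general one can only reach $\liminf_{k\to\infty}\Comp(\pi^{n_k})$ — but this imprecision is inherited from the statement itself (the paper handles it by extracting a liminf-realizing subsequence at the very start of its proof), and the bound $\Comp(\pi)\leq \liminf_{k\to\infty}\Comp(\pi^{n_k})$ that your argument genuinely yields is all that is used in the application to Theorem \ref{thm:pmGHBIP}.
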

\begin{proof} We start passing to a subsequence, not relabeled, realizing the $\limi_{n\to\infty}\Comp(\pi^n)$.  From \eqref{eq:costgeo} and the fact that the measures $\mu^n_i$ have uniformly bounded supports in $\cP(\Z)$ and arguing as in the proof of Lemma \ref{lem:approxPol} we see that tightness will follow if we show that $\{(\e_t)_\sharp\pi^n:t\in[0,1],n\in\N\}$ is tight. To see this, let $\eta:\Z\to[0,1]$ be Lipschitz with bounded support and identically 1 on $B_{R+1}(x_\infty)\subset\Z$. Since $x_n\to x_\infty$ we see that $\int\eta\,\d\mm_n>0$ for every $n$ sufficiently big, hence the  definition
\[ 
\tilde\mm_n :=\tfrac1{z_n}\eta  \mm_n \in \cP(\Z), \qquad z_n:= \int \eta \, \d\mm_n, 
\]
is well posed for every $n\in\N$ sufficiently big and we have  $\tilde\mm_n\weakto \tilde\mm_\infty $ in duality with $C_b(\Z)$. In particular, $(\tilde\mm_n)$ is a tight sequence. Now fix $\eps>0$ and  find $K\subset\Z$ compact such that $\lims_n\tilde\mm_n(\Z\setminus K)<\eps$. Then observe that for every $n\in\N$ big enough we have ${\rm supp}((\e_t)_\sharp\pi^n)\subset\{\eta=1\}$ and thus 
\begin{equation}
\label{eq:1}
(\e_t)_\sharp\pi^n\leq\Comp(\pi^n)\tilde\mm_n,\qquad\text{ for every $t\in[0,1]$ and $n$ big enough.}
\end{equation}
Hence for any $S>\lim_n\Comp(\pi^n)$ we have 
\[
\lims_{n\to\infty}\sup_{t\in[0,1]}(\e_t)_\sharp\pi^n(\Z\setminus K)\leq \lim_{n\to\infty}\Comp(\pi^n)\tilde\mm_n(\Z\setminus K)\leq S\eps,
\]
proving the desired tightness.

Now say that  $\pi^{n_k}\weakto\pi\in\cP(C([0,1],\Z))$. Then $(\e_t)_\sharp\pi^n\weakto(\e_t)_\sharp\pi$ for every $t\in[0,1]$ (in particular {\rm i)} holds), thus passing to the limit in \eqref{eq:1} we obtain 
\[
(\e_t)_\sharp\pi\leq S\tilde\mm_\infty\leq S\mm_\infty,\qquad\forall t\in[0,1]\text{ and }S>\lim_n\Comp(\pi^n),
\]
thus {\rm iii)} holds. To see {\rm ii)} notice that since the measures $\mu^n_i$ have uniformly bounded support, the weak convergence $\mu^{n_k}_i\weakto (\e_i)_\sharp\pi$ implies $W_q$-convergence, thus recalling the characterization \eqref{eq:costgeo} of optimal geodesic plans we have
\[
\Keq(\pi)\leq\limi_{k\to\infty}\Keq(\pi^{n_k})=\limi_{k\to\infty}W_q^q(\mu^{n_k}_0,\mu^{n_k}_1)=W_q^q(\mu_0,\mu_1)
\]
and the conclusion follows.
\end{proof}
We come to the actual stability result:
\begin{theorem}[pmGH-stability of (BIP)]\label{thm:pmGHBIP}
Let $\Xdmxn$, $n \in \bar{\N}$, be a sequence of pointed metric measure spaces with $\Xdmxn \overset{pmGH}{\rightarrow} \Xdmxinf$. Suppose $(\X_n,\sfd_n,\mm_n)$ satisfies the (BIP) with profile function $D\mapsto C^n_q(D)$ for all $n \in \N$ and there exist non increasing assignments $D\mapsto C_q(D)$ so that $\lims_n C_q^n(D) \le C_q(D)<\infty$ for every $D>0,q \in (1,\infty)$.

Then $(\X_\infty,\sfd_\infty,\mm_\infty)$ has the (BIP) with profile function $D\mapsto C_q(D)$.
\end{theorem}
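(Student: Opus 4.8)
The plan is to manufacture a competitor for the (BIP) on $\X_\infty$ out of competitors on the spaces $\X_n$, and then pass to the limit via Lemma~\ref{lem:pipmGH}. Throughout, identify $\X_n$, $n\in\bar\N$, with their images in the common space $(\Z,\sfd_\Z)$ of Definition~\ref{def:pmGH}. Fix $q\in(1,\infty)$ and let $\mu_i=\rho_i\mm_\infty$, $i=0,1$, be absolutely continuous with bounded densities and with $\mathrm{diam}(\mathrm{supp}(\mu_0)\cup\mathrm{supp}(\mu_1))<D$; set $\Lambda:=\|\rho_0\|_{L^\infty(\mm_\infty)}\vee\|\rho_1\|_{L^\infty(\mm_\infty)}$. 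We must produce $\pi\in\OptGeo_q(\mu_0,\mu_1)$ with $(\e_t)_\sharp\pi\le C_q(D)\,\Lambda\,\mm_\infty$ for every $t\in[0,1]$.

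\emph{Step 1 (continuous densities on $\Z$).} Since $C_{bs}(\Z)$ is dense in $L^1(\mm_\infty)$, for each $k\in\N$ choose $g_i^k\in C_{bs}(\Z)$ with $g_i^k\to\rho_i$ in $L^1(\mm_\infty)$ (extending $\rho_i$ by $0$ off $\X_\infty$); truncating between $0$ and $\|\rho_i\|_{L^\infty(\mm_\infty)}$ and multiplying by a Lipschitz cut-off equal to $1$ on $\mathrm{supp}(\mu_i)$ and supported in a small enough neighbourhood of it, one obtains $\phi_i^k\in C_{bs}(\Z)$ with $0\le\phi_i^k\le\|\rho_i\|_{L^\infty(\mm_\infty)}$ \emph{everywhere on $\Z$}, with $\mathrm{supp}(\phi_0^k)\cup\mathrm{supp}(\phi_1^k)$ contained in a fixed bounded set of $\Z$ of diameter still $<D$, and with $\phi_i^k\to\rho_i$ in $L^1(\mm_\infty)$. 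Set $z_i^k:=\int\phi_i^k\,\d\mm_\infty$, so $z_i^k\to1$, and $\mu_i^k:=\tfrac1{z_i^k}\phi_i^k\mm_\infty\in\cP(\X_\infty)$ for $k$ large; then $\mu_i^k\to\mu_i$ in total variation and the $\mm_\infty$-density of $\mu_i^k$ is $\le\|\rho_i\|_{L^\infty(\mm_\infty)}/z_i^k$.

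\emph{Step 2 (transport on $\X_n$).} Fix such a $k$. Since $(\iota_n)_\sharp\mm_n\weakto(\iota_\infty)_\sharp\mm_\infty$ in duality with $C_{bs}(\Z)$ and $\phi_i^k,\ \psi\phi_i^k\in C_{bs}(\Z)$ for every $\psi\in C_b(\Z)$, the quantities $z_i^{k,n}:=\int\phi_i^k\,\d\mm_n$ tend to $z_i^k$ as $n\to\infty$; hence for $n$ large $\mu_i^{k,n}:=\tfrac1{z_i^{k,n}}\phi_i^k\mm_n\in\cP(\X_n)$ is well defined, absolutely continuous with density $\le\|\rho_i\|_{L^\infty(\mm_\infty)}/z_i^{k,n}$, supported in a fixed $\Z$-ball of diameter $<D$ — hence, using $x_n\to x_\infty$, inside $B_R(x_n)$ for some $R$ independent of $n$ and $k$ — and $\mu_i^{k,n}\weakto\mu_i^k$ as $n\to\infty$. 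The (BIP) on $(\X_n,\sfd_n,\mm_n)$ then yields $\pi^{k,n}\in\OptGeo_q(\mu_0^{k,n},\mu_1^{k,n})$ with
\[
\Comp(\pi^{k,n})\le C_q^n(D)\,\frac{\Lambda}{\min\{z_0^{k,n},z_1^{k,n}\}},\qquad\text{hence}\qquad \lims_{n\to\infty}\Comp(\pi^{k,n})\le \frac{C_q(D)\,\Lambda}{\min\{z_0^k,z_1^k\}}=:M_k<\infty .
\]

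\emph{Step 3 (two limits).} For fixed $k$, Lemma~\ref{lem:pipmGH} applied to $(\pi^{k,n})_n$ gives, along a subsequence, $\pi^{k,n}\weakto\pi^k\in\OptGeo_q(\mu_0^k,\mu_1^k)$ (the marginals being pinned down by the convergences of Step~2) with $\Comp(\pi^k)\le M_k$. Now apply Lemma~\ref{lem:pipmGH} once more, this time to the \emph{constant} sequence $\X_\infty\overset{pmGH}{\rightarrow}\X_\infty$ and to the plans $(\pi^k)_k$: their compressions are bounded by $\sup_k M_k<\infty$ and their endpoints have supports in a fixed ball of $\X_\infty$, so along a subsequence $\pi^k\weakto\pi\in\OptGeo_q(\mu_0,\mu_1)$ — the endpoints converging to the original $\mu_i$ by Step~1 — with
\[
\Comp(\pi)\le\liminf_{k\to\infty}\Comp(\pi^k)\le\liminf_{k\to\infty}M_k=C_q(D)\,\Lambda ,
\]
since $z_i^k\to1$. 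Thus $(\e_t)_\sharp\pi\le C_q(D)\,\Lambda\,\mm_\infty$ for all $t\in[0,1]$, which is exactly the (BIP) bound; as $q$ and $\mu_0,\mu_1$ were arbitrary, $(\X_\infty,\sfd_\infty,\mm_\infty)$ has the (BIP) with profile $D\mapsto C_q(D)$.

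\emph{Where the difficulty lies.} The only genuinely delicate point is Steps~1--2: one must trade the merely $L^\infty$ densities $\rho_i$ for honestly continuous functions on the ambient $\Z$ whose \emph{pointwise} supremum — not just their $\mm_\infty$-essential supremum — is controlled by $\|\rho_i\|_{L^\infty(\mm_\infty)}$, so that transplanting them as densities with respect to $\mm_n$ does not inflate the $L^\infty$ bound entering the (BIP), while simultaneously keeping the supports inside a set of diameter $<D$ and ensuring the normalisations $z_i^{k,n}$ converge to $1$. Once this is set up, the two invocations of Lemma~\ref{lem:pipmGH} are routine; alternatively, one could collapse Step~3 into a single diagonal sequence $\pi^{k(n),n}$ with $k(n)\to\infty$ slowly enough, at the cost of slightly more careful bookkeeping of the convergences $\mu_i^{k,n}\weakto\mu_i$.
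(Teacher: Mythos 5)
Your argument is correct: the construction in Steps 1--2 does produce, for each $k$ and large $n$, admissible marginals on $\X_n$ with the right support and density bounds (the pointwise, not merely essential, bound $0\le\phi_i^k\le\|\rho_i\|_{L^\infty(\mm_\infty)}$ is indeed the crucial point, and your truncation-plus-cutoff approximation delivers it while keeping the $L^1(\mm_\infty)$ convergence and the diameter $<D$), and the two applications of Lemma~\ref{lem:pipmGH} (the second one to the constant sequence $\X_\infty$, which is a legitimate, if degenerate, pmGH limit) close the argument. The overall skeleton — approximate the endpoint measures by measures on $\X_n$ with controlled $L^\infty$ densities, invoke the (BIP) on $\X_n$, and pass to the limit through the compactness Lemma~\ref{lem:pipmGH} together with lower semicontinuity of the compression — is the same as in the paper; what differs is the approximation mechanism. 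The paper does not regularize the densities at all: for each $n$ it transports $\mu_\infty$ itself onto $\X_n$ by weighting an optimal coupling $\alpha^n\in\Opt_q(\tilde\mm_\infty,\tilde\mm_n)$ between cut-off, normalized versions of the reference measures by the density $\frac{\d\mu_\infty}{\d\tilde\mm_\infty}$, which immediately yields $W_q$-convergent marginals on $\X_n$ with $\lims_n\|\rho^n_i\|_{L^\infty(\mm_n)}\le\|\rho_i\|_{L^\infty(\mm_\infty)}$, so a single limit in $n$ and one application of Lemma~\ref{lem:pipmGH} suffice. Your route trades that coupling construction for a density-regularization on the ambient space $\Z$ plus a second limit in $k$ (or a diagonal sequence): it is somewhat more elementary in the tools it uses and makes explicit why pointwise control of the density is what allows it to be read against $\mm_n$, at the price of slightly more bookkeeping (the normalizations $z_i^{k,n}$, the nested subsequences), whereas the paper's transport trick keeps the density bound asymptotically sharp in one stroke.
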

\begin{proof}

We subdivide the proof in two steps.

\noindent\textsc{Step 1}. Let $q\in(1,\infty)$, $\mu_\infty=\rho_\infty\mm_\infty\in\cP(\X_\infty)\subset\cP(\Z)$ be with bounded support and $A\subset\Z$ open bounded with $\sfd({\rm supp}(\mu_\infty ),\Z\setminus A)>0$. We claim that there is a sequence $n\mapsto\mu_n=\rho_n\mm_n\in\cP(\X_n)\subset\cP(\Z)$ $W_q$-converging to $\mu_\infty$ with 
\begin{equation}
\label{eq:limsdens}
\lims_n\|\rho_n\|_{L^\infty(\mm_n)}\leq \|\rho_\infty\|_{L^\infty(\mm_\infty)}
\end{equation}
such that ${\rm supp}(\mu_n)\subset A$ for every $n$ sufficiently big.

To see this, let $\eta:\Z\to[0,1]$ be continuous, identically 1 on ${\rm supp}(\mu)$ and with support contained in $A$. Put
\[
\tilde\mm_n:=\frac1{z_n}\eta\mm_n,\qquad\text{ where }\qquad z_n:=\int\eta\,\d\mu_n
\]
and similarly $\mm_\infty$. Notice that the assumptions on $\mu_\infty$ ensure that $z_\infty>0$, so that $\tilde\mm_\infty$ is well defined, and thus the pmGH-convergence guarantees that $z_n>0$ for every $n$ sufficiently big, so that for these $n$'s the probability measures $\tilde\mm_n\in\cP(\Z)$ are well defined and weakly converge to $\tilde\mm_\infty$ in duality with $C_b(\Z)$. In the forthcoming discussion we will neglect the small $n$'s and think the $\tilde\mm_n$'s to be defined for every $n\in\N$.

By construction ${\rm supp}(\tilde\mm_n)\subset A$ for every  $n\in\N$ and since $A$ is bounded we deduce that $W_q(\tilde\mm_n,\tilde\mm_\infty)\to 0$ as $n\to\infty$. Let $\alpha^n\in \Opt_q(\tilde{\mm}_\infty, \tilde{\mm}_n)$, and define 
\[
\mu_n:=P^2_\sharp \beta^n,\qquad\text{ where }\qquad \d\beta^n(x,y) := \frac{\d {\mu}_\infty}{\d \tilde\mm_\infty}(x)\,\d\alpha^n(x,y) \in \cP(\Z\times \Z).
\]
Notice that $P^1_\sharp\beta^n=\mu_\infty$, and thus $\beta^n\in \Adm(\tilde{\mu}_\infty,\tilde \mu_n)$. Also, from $\frac{\d {\mu}_\infty}{\d \tilde\mm_\infty}=z_\infty\frac{\d {\mu}_\infty}{\d \mm_\infty}=z_\infty\rho_\infty$ we get $\beta^n \leq z_\infty \|\rho_\infty\|_{L^\infty(\mm_\infty)} \alpha^n$ and thus  
\[
\mu_n \leq z_\infty \|\rho_\infty\|_{L^\infty(\mm_\infty)} P^2_\sharp \alpha^n =z_\infty \|\rho_\infty\|_{L^\infty(\mm_\infty)} \tilde \mm_n\leq\frac{z_\infty}{z_n} \|\rho_\infty\|_{L^\infty(\mm_\infty)} \mm_n.
\]
Since clearly $z_n\to z_\infty$, \eqref{eq:limsdens} holds.  Moreover, we have
\begin{equation}
\begin{split}
W_q^q( \mu_\infty, \mu_n)& \le \int\sfd^q(x,y) \, \d \beta^n(y_1,y_2) \\
&\leq z_\infty \|\rho_\infty\|_{L^\infty(\mm_\infty)}\int\sfd^q(x,y) \, \d \alpha^n(y_1,y_2)\le z_\infty \|\rho_\infty\|_{L^\infty(\mm_\infty)} W_q^q(\tilde\mm_\infty,\tilde \mm_n)\to 0,
\end{split}\label{eq:pushestim}\end{equation}
and the claim is proved.

\noindent\textsc{Step 2}. Let $D>0$, $\mu_0,\mu_1 \in \cP(\X_\infty)$ be absolutely continuous with bounded densities and ${\rm diam}({\rm supp}(\mu_0) \cup{\rm supp}(\mu_1))<D$. Let  $A\subset\Z$ be open with ${\rm diam}(A)<D$ and $\sfd(({\rm supp}(\mu_0) \cup{\rm supp}(\mu_1)),\Z\setminus A)>0$: apply the previous step with $A$ and the measures $\mu_0,\mu_1$ to find corresponding sequences $(\mu^n_i)$ as above. Since $\X_n$ is a BIP space we can find $\pi^n\in \OptGeo_q(\mu^n_0,\mu^n_1)$ with 
\begin{equation}
\label{eq:bipn}
\Comp(\pi^n)\leq C_q^n(D)\big(\|\rho_0^n\|_{L^\infty(\mm_n)}\vee \|\rho^n_1\|_{L^\infty(\mm_n)}\big),
\end{equation}
where $\rho^n_i:=\frac{\d\mu^n_i}{\d\mm}$. 
By Lemma  \ref{lem:pipmGH} above, the sequence $(\pi^n)$ has a subsequence weakly converging to some $\pi\in \OptGeo_q(\mu_0,\mu_1)$, so that taking into account {\rm iii)} of Lemma \ref{lem:pipmGH} and the previous step, by taking the $\lims$ in \eqref{eq:bipn} we conclude that
\[
\Comp(\pi)\leq\lims_{n\to\infty} C_q^n(D)\big(\|\rho_0\|_{L^\infty(\mm_\infty)}\vee \|\rho_1\|_{L^\infty(\mm_\infty)}\big)
\]
and the conclusion follows.
\end{proof}

\section{$p$-independent differential calculus}
\subsection{Unification of $p$-differential calculus}
In this section we study the effect of $p$-independence of weak upper gradients in terms of differential calculus. Informally, the idea is that on this sort of spaces we should have a concept of differential (and thus of cotangent module) which is independent on the chosen Sobolev exponent $p$.
\begin{theorem}[Universal cotangent module - weak version]\label{thm:dpindepenw}
Let $\Xdm$ be a metric measure space with $p$-independent weak upper gradients in weak sense. 

Then there is a unique couple $(L^0(T^*\X),\d)$ where $L^0(T^*\X)$ is a $L^0(\mm)$-normed module and $\d \colon \cup_{p\in(1,\infty)}S^p_{loc}(\X)\rightarrow L^{0}(T^*\X)$ is such that for any $p\in(1,\infty)$ it holds
\begin{itemize}
\item[{\rm i)}] The restriction of $\d$ to $S^p_{loc}(\X)$ is linear;
\item[{\rm ii)}] For any $f\in S^{p}_{loc}(\X)$, it holds $|Df|_p=|\d f|$ $\mm$-a.e.;
\item[{\rm iii)}] The space $\{\d f\colon f \in W^{1,p}(\X)\}$ generates $L^0(T^*\X)$ as a module (for each fixed $p\in(1,\infty)$ - in particular also $\cup_p\{\d f\colon f \in W^{1,p}(\X)\}$ generates $L^0(T^*\X)$).
\end{itemize}
Here, uniqueness is intended up to unique isomorphism, i.e. if $(\MM,L)$ is another couple with the same properties, there is a unique module isomorphism $\Phi \colon \MM\rightarrow L^0(T^*\X)$ so that $\Phi\circ L= \d$.

Moreover, the identification $I_p \colon L^0_p(T^*\X)\to L^0(T^*\X)$ sending $\d_p f\mapsto \d f$ induces the module isomorphism $J_p \colon L^0_p(T\X) \to (L^0(T\X))^*$.
\end{theorem}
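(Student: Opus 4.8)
The plan is to canonically identify, for every pair of exponents, the $p$-cotangent modules $(L^0_p(T^*\X),\d_p)$ of Theorem \ref{thm:ctgmodule} and then take the resulting common object as $L^0(T^*\X)$. So fix $p_1<p_2$. Localizing \eqref{eq:Wp1p2} gives $S^{p_2}_{loc}(\X)\subseteq S^{p_1}_{loc}(\X)$, so the relevant classes are nested. I would first check that the assignment $\sum_i\nchi_{A_i}\d_{p_2}g_i\mapsto\sum_i\nchi_{A_i}\d_{p_1}g_i$, with $(A_i)$ a finite Borel partition of $\X$ and $g_i\in W^{1,p_1}(\X)\cap W^{1,p_2}(\X)$, is well posed and a pointwise isometry: well-posedness reduces to the implication ``$|D(g-h)|_{p_2}=0$ on a Borel set $\Rightarrow|D(g-h)|_{p_1}=0$ there'' for $g-h\in W^{1,p_1}(\X)\cap W^{1,p_2}(\X)$, which is property b) of Definition \ref{def:pindependent}; the isometry claim is again property b) together with part i) of Theorem \ref{thm:ctgmodule} ($|\d_p g|=|Dg|_p$). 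By property a) of Definition \ref{def:pindependent} (density of $W^{1,p_1}(\X)\cap W^{1,p_2}(\X)$) and part ii) of Theorem \ref{thm:ctgmodule}, the source elements are dense in $L^0_{p_2}(T^*\X)$ and their images are dense in $L^0_{p_1}(T^*\X)$; hence the assignment extends to a module isomorphism $\iota^{p_1}_{p_2}\colon L^0_{p_2}(T^*\X)\to L^0_{p_1}(T^*\X)$ intertwining $\d_{p_2}$ and $\d_{p_1}$ on $W^{1,p_1}(\X)\cap W^{1,p_2}(\X)$.

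Next I would upgrade this to $\iota^{p_1}_{p_2}(\d_{p_2}g)=\d_{p_1}g$ for \emph{every} $g\in S^{p_2}_{loc}(\X)$. For $\eta\in\Lip_{bs}(\X)$ and $k\in\N$, the function $\eta g^k$, with $g^k:=(-k)\vee g\wedge k$, lies in $W^{1,p_1}(\X)\cap W^{1,p_2}(\X)$: it is bounded, boundedly supported, and by the very definition of $S^p_{loc}(\X)$ and the Leibniz rule \eqref{eq:leibwug} its weak upper gradient belongs to $L^{p_1}(\mm)\cap L^{p_2}(\mm)$. Thus $\iota^{p_1}_{p_2}(\d_{p_2}(\eta g^k))=\d_{p_1}(\eta g^k)$; restricting to $\{\eta=1\}\cap\{|g|<k\}$, on which $\eta g^k$ agrees with $g$, using the locality of the differentials, and then letting $\eta\uparrow 1$ and $k\uparrow\infty$, the identity follows. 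Granting this, for $p_1<p_2<p_3$ the cocycle relation $\iota^{p_1}_{p_2}\circ\iota^{p_2}_{p_3}=\iota^{p_1}_{p_3}$ is obtained by testing both sides — continuous $L^0(\mm)$-linear maps — on the set $\{\d_{p_3}g:g\in W^{1,p_1}(\X)\cap W^{1,p_3}(\X)\}$, whose $L^0$-span is dense in $L^0_{p_3}(T^*\X)$ by property a): any such $g$ lies in $S^{p_3}_{loc}(\X)\subseteq S^{p_2}_{loc}(\X)$, and then both compositions send $\d_{p_3}g\mapsto\d_{p_1}g$ by the previous step.

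I would then fix any $\bar p\in(1,\infty)$, set $L^0(T^*\X):=L^0_{\bar p}(T^*\X)$ (equivalently, pass to the colimit of the totally ordered system of the $\iota$'s, so the choice of $\bar p$ is immaterial), and let $I_p\colon L^0_p(T^*\X)\to L^0(T^*\X)$ be $\iota^{\bar p}_p$, $(\iota^{p}_{\bar p})^{-1}$, or the identity according as $p>\bar p$, $p<\bar p$, or $p=\bar p$; by the cocycle relation these module isomorphisms are mutually compatible, i.e.\ $I_{p_2}=I_{p_1}\circ\iota^{p_1}_{p_2}$ whenever $p_1<p_2$. For $f\in S^p_{loc}(\X)$ set $\d f:=I_p(\d_p f)$; this is unambiguous on $\bigcup_{p}S^p_{loc}(\X)$, since if $f\in S^{p_1}_{loc}(\X)\cap S^{p_2}_{loc}(\X)$ with $p_1<p_2$ then $\iota^{p_1}_{p_2}(\d_{p_2}f)=\d_{p_1}f$ by the upgrading step, whence $I_{p_1}(\d_{p_1}f)=I_{p_2}(\d_{p_2}f)$ by compatibility. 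Properties i)--iii) are then read off from the corresponding properties of $(L^0_p(T^*\X),\d_p)$ together with $I_p$ being a linear, surjective, pointwise-isometric module morphism. Uniqueness up to unique isomorphism is the standard argument: any other couple $(\MM,L)$ is matched with $(L^0(T^*\X),\d)$ by the map determined on $\{Lf:f\in W^{1,\bar p}(\X)\}$ by $Lf\mapsto\d f$, which is well posed and pointwise isometric by property ii), extends to a module isomorphism by density, and is shown to intertwine $L$ and $\d$ on all of $\bigcup_p S^p_{loc}(\X)$ by the same truncation/cut-off passage (using that linearity and ii) force locality on $L$); $\Phi$ is unique because $\{Lf:f\in W^{1,\bar p}(\X)\}$ generates $\MM$. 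Finally, $I_p$ being a surjective pointwise isometry, $\{I_p v:|v|\le1\}=\{w:|w|\le1\}$, so its adjoint $L\mapsto L\circ I_p$ is again a pointwise isometry, hence a module isomorphism $(L^0(T^*\X))^*\to(L^0_p(T^*\X))^*$; inverting it yields the module isomorphism $J_p\colon L^0_p(T\X)\to L^0(T\X)$.

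The crux, and the step I expect to be the main obstacle, is the construction of the isomorphisms $\iota^{p_1}_{p_2}$ and the verification of their compatibility: this is precisely where the two halves of the weak $p$-independence hypothesis are both needed — property a) to make the relevant families of differentials dense (so that the maps extend and are onto) and property b) to make them pointwise isometries (so that the maps are well posed). The detour through the truncation/cut-off argument is unavoidable because in the merely weak regime $\bigcup_p S^p_{loc}(\X)$ need not be stable under sums, so $\d$ cannot be defined directly on the union but must be built exponent by exponent and then checked to be coherent.
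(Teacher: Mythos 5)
Your proposal is correct and follows essentially the same route as the paper: it identifies the $p$-cotangent modules via pointwise isometries defined on simple covector fields built from $W^{1,p_1}(\X)\cap W^{1,p_2}(\X)$ (density from a), isometry from b) together with Theorem \ref{thm:ctgmodule}), upgrades the identification to $S^p_{loc}(\X)$ by truncation/cut-off and locality, fixes a reference exponent $\bar p$ to define $\d:=I_p\circ\d_p$, and obtains uniqueness and $J_p$ from Theorem \ref{thm:ctgmodule} and duality. Your explicit check of the cocycle relation for the maps $\iota^{p_1}_{p_2}$ simply makes precise the compatibility the paper leaves implicit when asserting well-posedness of $\d$ on the union of the $S^p_{loc}(\X)$'s.
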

\begin{proof}\ \\
\textsc{Uniqueness}. For any $p \in (1,\infty)$, the couple $(L^0(T^*\X),\d\restr{S^{p}_{loc}})$ satisfies the same properties of $(L_p^0(T^*\X),\d_p)$ in Theorem \ref{thm:ctgmodule}. Therefore, uniqueness is a direct consequence of the uniqueness part such Theorem.\newline
\textsc{Existence}. Fix $p _1,p_2 \in (1,\infty), f,g\in W^{1,p_1}(\X)\cap W^{1,p_2}(\X)$ and $E\subset \X$ Borel. We observe that locality of differentials together with the assumption on the metric measure space yield
\begin{equation} \label{eq:localind}
\begin{aligned}
\d_{p_1} f= \d_{p_1}g \quad \mm\text{-a.e. on }E & &\Leftrightarrow & &\d_{p_2} f= \d_{p_2} g  &\quad \mm\text{-a.e. on }E 
\end{aligned}
\end{equation}
Indeed, 
\begin{equation*} 
\begin{aligned}
&\d_{p_1} f= \d_{p_1} g \quad \mm\text{-a.e. on }E & &\Leftrightarrow & & |\d_{p_1}(f-g)|=0  &\mm\text{-a.e. on }E \\
&& & \Leftrightarrow & &|D(f-g)|_{p_1}=0 &\mm\text{-a.e. on }E \\
&& & \Leftrightarrow & &|D(f-g)|_{p_2}=0 &\mm\text{-a.e. on }E \\
&& & \Leftrightarrow & &|\d_{p_2}(f-g)|=0 &\mm\text{-a.e. on }E \\
&& & \Leftrightarrow & &\d_{p_2}f=\d_{p_2}g  &\mm\text{-a.e. on }E 
\end{aligned}
\end{equation*}
Building up on property \eqref{eq:localind} we are going to construct an isomorphism  $I_{p_1}^{p_2} : L^0_{p_1}(T^*\X) \rightarrow L^0_{p_2}(T^*\X)$ sending $\d_{p_1}f$ to $\d_{p_2}f$. We start by defining its action on simple $1$-forms. Denote by $V_{p_j} \subset L^0_{p_j}(T^*\X)$, $j=1,2$, the space of covector fields of type $\sum_{i=1}^n \nchi_{E_i}\d_{p_j} f_i$,  where $(E_i)$ is a finite Borel  partition of $\X$, and $(f_i) \subset W^{1,p_1}(\X)\cap W^{1,p_2}(\X)$. Then define $I_{p_1}^{p_2} \colon V_{p_1}\rightarrow V_{p_2}$ by the formula 
\[ 
I_{p_1}^{p_2}\Big(\sum_{i=1}^n \nchi_{E_i}\d_{p_1} f_i\Big):= \sum_{i=1}^n \nchi_{E_i}\d_{p_2} f_i. 
\]
It can be readily checked that \eqref{eq:localind} ensures the well posedness of such map. Moreover, $I_{p_1}^{p_2}$ is linear and, due to the independence of weak upper gradients, it is a pointwise isometry, since
\[ \Big|  I_{p_1}^{p_2}\Big(\sum_{i=1}^n \nchi_{E_i}\d_{p_1} f_i\Big) \Big| = \sum_{i=1}^n \nchi_{E_i}|\d_{p_2} f_i| = \sum_{i=1}^n \nchi_{E_i}|\d_{p_1} f_i| = \Big| \Big(\sum_{i=1}^n \nchi_{E_i}\d_{p_1} f_i\Big) \Big| \quad \mm\text{-a.e.}\]
Therefore, it is continuous  and, with a little abuse of notation, it uniquely extends to a pointwise isometry from the closure of $V_{p_1}$ with values in $L^0_{p_2}(T^*\X)$. It is clear that, thanks to a) of Definition \ref{def:pindependent} and {\rm ii)} of Theorem \ref{thm:ctgmodule}, the closure of $V_{p_j}$ coincides with $L_{p_j}^0(T^*\X)$ itself, $j=1,2$. We thus built a module isomorphism $I_{p_1}^{p_2}$ such that
\begin{equation}
\label{eq:identificdiff}
I_{p_1}^{p_2}(\d_{p_1} f) = \d_{p_2} f,
\end{equation} 
holds for every $f \in W^{1,p_1}(\X)\cap W^{1,p_2}(\X)$ and it is then clear from  definition of $S^{p_j}_{loc}(\X)$ and the locality of the differential  that \eqref{eq:identificdiff} holds for any $f\in S^{p_1}_{loc}(\X)\cap S^{p_2}_{loc}(\X)$.

To conclude, fix $\bar{p} \in (1,\infty)$, set $I_p := I_p^{\bar p}$, define the module $L^0(T^*\X) := L_{\bar{p}}^0(T^*\X)$, and the differential $\d $ as  $\d\restr{S^{p}_{loc}(\X)} := I_p \circ \d_p$. Notice that \eqref{eq:identificdiff} ensures that the definition of $\d$ is well posed. 

Then property {\rm i)} follows from the linearity of $\d_p$ and $I_p^{\bar p}$. Property {\rm ii)} follows from the fact that  for every $p \in (1,\infty)$, $I_p $ is a pointwise isometry, as $|\d f| = |I_p (\d_p f)|=|\d_pf|=|Df|_p$,  $\mm$-a.e.. Finally, we know that $\{ \d_p f \colon  f \in W^{1,p}(\X)\}$ generates $L^0_p(T^*\X)$, thus {\rm iii)} follows from the fact that $I_p$ is an isomorphism. Finally, the claim about $J_p$ is obvious.
\end{proof}

\begin{definition}[Universal differential structures]\label{def:univ}
Let $\Xdm$ be a metric measure space with $p$-independent weak upper gradients in weak sense. We call the \emph{universal cotangent module} the module $L^0(T^*\X)$ and $\d$ the associated \emph{universal differential} given by Theorem \ref{thm:ctgmodule}. Moreover, we call the \emph{universal tangent module}, and denote it by $L^0(T\X)$, the dual module $(L^0(T^*\X)^*$.
\end{definition}

In spaces with $p$-independent weak gradients in the strong sense, the following stronger result holds. Basically, it says that in this case the situation is closer to the standard Euclidean one, where one first has a distributional differential and then, by investigating its integrability, deduces whether the function is Sobolev:
\begin{theorem}[Universal cotangent module - strong version]\label{thm:dpindepens}
Let $\Xdm$ be a metric measure space with $p$-independent weak upper gradients in strong sense. 

Then in addition to the results in Theorem \ref{thm:dpindepenw}, the following holds. Let $f\in  \cup_{p\in(1,\infty)}S^p_{loc}(\X)$ be such that for some $\bar p\in(1,\infty)$ we have $|\d f|\in L^{\bar p}_{loc}(\mm)$.

Then $f\in S^{\bar p}_{loc}(\X)$.
\end{theorem}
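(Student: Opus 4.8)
The plan is to re-run the truncation-and-cut-off mechanism from the proof of Theorem~\ref{thm:pind}, but to feed its output into the \emph{global} strong-independence property~c) of Definition~\ref{def:pindependent} instead of into Proposition~\ref{prop:sobolevPi}.

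To begin with, I would pick $p_0\in(1,\infty)$ with $f\in S^{p_0}_{loc}(\X)$ (possible by hypothesis); by property~ii) of Theorem~\ref{thm:dpindepenw} one has $|\d f|=|Df|_{p_0}$ $\mm$-a.e., so the assumption $|\d f|\in L^{\bar p}_{loc}(\mm)$ reads $|Df|_{p_0}\in L^{\bar p}_{loc}(\mm)$. The target is the defining condition of $S^{\bar p}_{loc}(\X)$ in Definition~\ref{def:Sploc}, with candidate $G:=|\d f|$, which is indeed a nonnegative element of $L^{\bar p}_{loc}(\mm)$.

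Next, I fix $k\in\N$ and $\eta\in\Lip_{bs}(\X)$, set $f^k:=(-k)\vee f\wedge k$ and put $h:=\eta f^k$. From $f\in S^{p_0}_{loc}(\X)$ and Definition~\ref{def:Sploc} we get $h\in S^{p_0}(\X)$ and $|Dh|_{p_0}\le|\eta||Df|_{p_0}$ $\mm$-a.e.\ on $\{\eta=1\}$; moreover the Leibniz rule \eqref{eq:leibwug} (valid also for locally Sobolev functions, as used in the proof of Theorem~\ref{thm:pind}) gives the global bound $|Dh|_{p_0}\le|\eta||Df|_{p_0}+k|D\eta|_{p_0}$. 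Since $h$ and $|D\eta|_{p_0}$ are bounded with bounded support and $|Df|_{p_0}\in L^{\bar p}_{loc}(\mm)$, both $h$ and the right-hand side above lie in $L^{p_0}(\mm)\cap L^{\bar p}(\mm)$; hence $h\in W^{1,p_0}(\X)$ with $h,|Dh|_{p_0}\in L^{\bar p}(\mm)$, and property~c) of Definition~\ref{def:pindependent}, applied with $p_1=p_0$ and $p_2=\bar p$, yields $h=\eta f^k\in W^{1,\bar p}(\X)\subseteq S^{\bar p}(\X)$.

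Finally, since $h\in W^{1,p_0}(\X)\cap W^{1,\bar p}(\X)$, property~b) of Definition~\ref{def:pindependent} gives $|Dh|_{\bar p}=|Dh|_{p_0}$ $\mm$-a.e., so on $\{\eta=1\}$ we obtain $|D(\eta f^k)|_{\bar p}=|D(\eta f^k)|_{p_0}\le|Df|_{p_0}=|\d f|$ $\mm$-a.e. As $k$ and $\eta$ were arbitrary, Definition~\ref{def:Sploc} is met with $G=|\d f|$, whence $f\in S^{\bar p}_{loc}(\X)$ (and then $|Df|_{\bar p}=|\d f|$ by Theorem~\ref{thm:dpindepenw}). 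The only genuinely delicate point, rather than a real obstacle, is the necessity of truncating $f$ to $f^k$ before multiplying by $\eta$: with no a priori $L^{\bar p}$-control on $f$ itself, the product $\eta f$ need not be Sobolev (cf.\ the remark following Definition~\ref{def:Sploc}), while $\eta f^k$ is bounded and the Leibniz estimate then lands precisely in $L^{\bar p}(\mm)$, which is exactly what makes property~c) applicable; everything else is routine bookkeeping with bounded supports, and in particular there is no case distinction according to whether $\bar p\lessgtr p_0$, since property~c) is stated for all pairs of exponents.
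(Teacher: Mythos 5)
Your proposal is correct and follows essentially the same route as the paper's proof: truncate to $f^k$, multiply by a boundedly supported cut-off, use the Leibniz bound to land in $L^{p_0}\cap L^{\bar p}(\mm)$, invoke the strong independence property c) to get membership in $W^{1,\bar p}(\X)$, and conclude via Definition \ref{def:Sploc}. The only cosmetic differences are that the paper works with the explicit cut-offs $\eta_R:=(1-\sfd(\cdot,B_R(\bar x)))^+$ rather than a generic $\eta\in\Lip_{bs}(\X)$, and your additional use of property b) to pin down the bound $|D(\eta f^k)|_{\bar p}\le|\d f|$ on $\{\eta=1\}$ is a slightly more careful verification of the defining inequality of $S^{\bar p}_{loc}(\X)$ than the paper spells out.
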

\begin{proof}
By assumption we know that for some $p\in(1,\infty)$ we have $f\in S^p_{loc}(\X)$. For $k,R>0$ let $f^k:=(-k)\vee f\wedge k$, $\eta_R:\X\to[0,1]$ given by $\eta_R:=(1-\sfd(\cdot,B_R(\bar x)))^+$, where $\bar x\in\X$ is some fixed chosen point, and $f^k_R:=\eta_Rf^k$. Then $f^k_R$ is bounded with bounded support, hence it belongs to $L^p\cap L^{\bar p}(\mm)$. Moreover, we have
\[
|D f^k_R|_{p}\leq \eta_R |D f|_{p_1}+\nchi_{{\rm supp}(\eta_R)}k=\eta_R|\d f|+\nchi_{{\rm supp}(\eta_R)}k
\]
and the assumption $|\d f|\in L^{\bar p}_{loc}(\mm)$ gives that the rightmost side in the above is in $L^{\bar p}(\mm)$. Then the fact that $\X$ has $p$-independent weak gradients in the strong sense implies that $f^k_R\in W^{1,\bar p}(\X)$ with $|D f^k_R|_{\bar p}\leq |\d f|+\nchi_{{\rm supp}(\eta_R)}k$. By the very definition of $S^{\bar p}_{loc}(\X)$ we just proved that $f\in S^{\bar p}_{loc}(\X)$, which is the conclusion.
\end{proof}

\subsection{Infinitesimal Hilbertianity and universal gradient}
Intuitively, when a metric measure is infinitesimal Hilbertian, there is a hidden scalar product between tangent directions at small scales. Such geometric concept has in principle nothing to do with the particular choice of the exponent $p=2$. Nevertheless, due to the dependence of weak upper gradients on $p$, this hidden geometry remains unseen by all other $W^{1,p}(\X)$. 

In this section we investigate some consequences of having both infinitesimal Hilbertianity and $p$-independent weak upper gradients. We start with the following proposition, whose proof essentially boils down into verifying that the classical Clarkson inequalities are valid also for elements of a generic Hilbert module:
\begin{proposition}\label{prop:Lpunifconvex}
Let $\Xdm$ be an infinitesimal Hilbertian metric measure space with $p$-independent gradients in weak sense. Then, $L^p(T^*\X)$ and $W^{1,p}(\X)$ are uniformly convex and consequently also reflexive for every $p \in (1,\infty)$.
\end{proposition}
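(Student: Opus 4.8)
The plan is to prove that $L^p(T^*\X)$ is uniformly convex, to transfer this to $W^{1,p}(\X)$ by an isometric embedding, and to get reflexivity from the Milman--Pettis theorem. For the transfer, note that by Theorem \ref{thm:dpindepenw} the universal differential $\d$ is defined on $W^{1,p}(\X)$ with $|\d f|=|Df|_p$ $\mm$-a.e., so $f\mapsto (f,\d f)$ is a linear isometry of $W^{1,p}(\X)$ into the product $L^p(\mm)\times L^p(T^*\X)$ endowed with the norm $\big(\|\cdot\|_{L^p(\mm)}^p+\|\cdot\|_{L^p(T^*\X)}^p\big)^{1/p}$, and the image is closed because $W^{1,p}(\X)$ is complete. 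Since a finite $\ell^p$-sum of uniformly convex Banach spaces is uniformly convex (and $L^p(\mm)$ is classically so) and closed subspaces inherit uniform convexity, it is enough to deal with $L^p(T^*\X)$.

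The key structural fact is that, $\Xdm$ being infinitesimally Hilbertian with $p$-independent weak gradients, the universal cotangent module $L^0(T^*\X)$ can be identified with the $p=2$ cotangent module (take $\bar p=2$ in the construction of Theorem \ref{thm:dpindepenw}), hence is a Hilbert module; consequently, for all $\omega,\eta\in L^p(T^*\X)\subset L^0(T^*\X)$ the pointwise norm obeys the parallelogram identity $|\tfrac{\omega+\eta}{2}|^2+|\tfrac{\omega-\eta}{2}|^2=\tfrac12(|\omega|^2+|\eta|^2)$ $\mm$-a.e. The point is that Clarkson's inequalities now follow exactly as in scalar $L^p$. If $p\ge 2$ this is a direct pointwise computation: from $a^{p/2}+b^{p/2}\le(a+b)^{p/2}$ ($a,b\ge0$) and convexity of $t\mapsto t^{p/2}$ one gets pointwise $|\tfrac{\omega+\eta}{2}|^p+|\tfrac{\omega-\eta}{2}|^p\le\tfrac12(|\omega|^p+|\eta|^p)$, and integrating gives the first Clarkson inequality, hence uniform convexity.

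For $1<p<2$ one needs the second Clarkson inequality, with $q$ conjugate to $p$. I would first prove the \emph{fibrewise} inequality
\[
\Big\|\tfrac{v+w}{2}\Big\|^q+\Big\|\tfrac{v-w}{2}\Big\|^q\le\Big(\tfrac{\|v\|^p+\|w\|^p}{2}\Big)^{q-1}\qquad\text{for }v,w\text{ in a Hilbert space,}
\]
which reduces to the classical one-dimensional numerical inequality behind Clarkson II: its right-hand side depends only on $\|v\|$ and $\|w\|$, while for fixed values of these the left-hand side is a convex function of $\|v+w\|^2$ on the interval $\big[(\|v\|-\|w\|)^2,(\|v\|+\|w\|)^2\big]$ (convexity because $q\ge 2$), hence maximal at an endpoint, and the endpoints are realised by the collinear configurations, which are one-dimensional. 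Applying this pointwise $\mm$-a.e. gives $|\tfrac{\omega+\eta}{2}|^q+|\tfrac{\omega-\eta}{2}|^q\le\big(\tfrac12(|\omega|^p+|\eta|^p)\big)^{q-1}$ $\mm$-a.e.; then Minkowski's integral inequality with exponent $r=q/p\ge 1$, applied to the pair of nonnegative functions $|\tfrac{\omega+\eta}{2}|^p$ and $|\tfrac{\omega-\eta}{2}|^p$, together with the identity $(q-1)\tfrac pq=1$, upgrades it to the second Clarkson inequality for $L^p(T^*\X)$. Taking $\|\omega\|_{L^p(T^*\X)}=\|\eta\|_{L^p(T^*\X)}=1$ then yields the required modulus of convexity.

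The reductions at both ends are routine; the \textbf{main obstacle} is the range $1<p<2$ --- establishing the fibrewise Clarkson-II inequality for elements of a Hilbert module and keeping the exponents straight in the Minkowski-integral step. One should also verify that the description of $L^p(T^*\X)$ as the $p$-integrable part of $L^0(T^*\X)$ makes the pointwise parallelogram identity immediately usable, and that $\sigma$-finiteness of $\mm$ (which holds since $\mm$ is boundedly finite on a separable space) justifies the use of Minkowski's integral inequality.
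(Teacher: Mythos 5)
Your proposal is correct and its skeleton is the same as the paper's: reduce $W^{1,p}(\X)$ to $L^p(T^*\X)$ via the isometry $f\mapsto(f,\d f)$ into $L^p(\mm)\times_p L^p(T^*\X)$, use infinitesimal Hilbertianity (through Theorem \ref{thm:dpindepenw}) to get the $\mm$-a.e.\ parallelogram identity for the pointwise norm, and then prove the two Clarkson inequalities; your $p\ge 2$ case is literally the paper's computation. The genuine divergence is in the range $1<p<2$. There the paper estimates pointwise through the quadratic mean, $\big|\frac{\omega+\eta}{2}\big|^q+\big|\frac{\omega-\eta}{2}\big|^q\le\big(\frac{|\omega|^2+|\eta|^2}{2}\big)^{q/2}$, and then compares this with $\big(\frac{|\omega|^p+|\eta|^p}{2}\big)^{q/p}$ invoking $\|x\|_2\le\|x\|_p$; note that this last comparison of means goes against the power-mean inequality (test $\eta=0$: $2^{-q/2}|\omega|^q>2^{-q/p}|\omega|^q$), so the two-step chain does not justify \eqref{eq:perrev} as written, even though \eqref{eq:perrev} itself is the true classical pointwise inequality. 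Your fibrewise reduction repairs exactly this point: fixing $|\omega|,|\eta|$ and viewing the left-hand side as a convex function of $s=|\omega+\eta|^2$, the maximum over the admissible interval $\big[(|\omega|-|\eta|)^2,(|\omega|+|\eta|)^2\big]$ is attained at the collinear endpoints, where the statement is the classical scalar inequality behind Clarkson II. Two remarks: first, the restriction to that interval is essential (on the full interval $[0,2|\omega|^2+2|\eta|^2]$ the endpoint value is only the quadratic-mean bound, i.e.\ the paper's insufficient intermediate quantity), so you do need the $\mm$-a.e.\ pointwise triangle and reverse triangle inequalities for the module norm — these hold, e.g.\ via the pointwise scalar product induced by \eqref{eq:infhiln}, but should be said; second, the scalar numerical inequality is the one nontrivial imported ingredient and, unlike the $p\ge2$ case, it is not a one-line convexity fact, so cite it (it is the standard lemma in textbook proofs of Clarkson's second inequality) or prove it. Your integration step is fine and equivalent to the paper's: the continuous Minkowski inequality with exponent $q/p\ge1$, with the two-point counting measure as the ``outer'' variable, gives precisely $\big(\|\frac{\omega+\eta}{2}\|_{L^p(T^*\X)}^q+\|\frac{\omega-\eta}{2}\|_{L^p(T^*\X)}^q\big)^{p/q}\le\int\big(|\frac{\omega+\eta}{2}|^q+|\frac{\omega-\eta}{2}|^q\big)^{p/q}\,\d\mm$, which is the reverse Minkowski inequality \eqref{eq:revlp} with $r=p/q$ that the paper proves by convexity and $1$-homogeneity; $\sigma$-finiteness is not an issue (a two-function duality argument suffices, and in any case $\mm$ is boundedly finite, hence $\sigma$-finite).
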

\begin{proof} Reflexivity is  a consequence of uniform convexity, so we focus on this latter property. Also, the map
\[
W^{1,p}(\X)\ni f\quad\mapsto\quad (f,\d f)\in L^p(\X)\times_p L^p(T^*\X)
\]
is an isometry and since $L^p(\X)$ is uniformly convex and so is the $L^p$-norm on $\R^2$ used to define the product norm, the uniform convexity of $W^{1,p}(\X)$ will follow if we show the one of $L^p(T^*\X)$. We thus concentrate on this latter space and observe that it is sufficient to show that the Clarkson inequalities hold:
\begin{equation}
\label{eq:cla}
\begin{array}{lrrrl}
p\in[2,\infty)&\Rightarrow&&\qquad \Big\| \dfrac{\omega+\eta}{2}\Big\|_{L^p(T^*\X)}^p+  \Big\|\dfrac{\omega-\eta}{2}\Big\|_{L^p(T^*\X)}^p &\le \frac{1}{2}\|\eta\|_{L^p(T^*\X)}^p + \frac{1}{2}\|\omega\|_{L^p(T^*\X)}^p,\\
p\in(1,2]&\Rightarrow&&\qquad \Big\| \dfrac{\omega+\eta}{2} \Big\|_{L^p(T^*\X)}^q + \Big\| \dfrac{\omega-\eta}{2} \Big\|_{L^p(T^*\X)}^q& \le \Big( \frac{1}{2} \| \omega \|_{L^p(T^*\X)}^p +\frac{1}{2} \| \eta \|_{L^p(T^*\X)}^p \Big)^\frac{q}{p},
\end{array}
\end{equation}
where $q\in(1,\infty)$ is the conjugate exponent of $p$ and $\omega,\eta$ are arbitrary elements in $L^0(T^*\X)$.

To see these, we start noticing that  the assumption of infinitesimal Hilbertianity (and Theorem \ref{thm:dpindepenw} and its proof) gives that
\begin{equation}
\label{eq:infhiln}
2|\eta|^2+2|\omega|^2=|\eta+\omega|^2+|\eta-\omega|^2,\qquad\mm\text{-a.e.},\qquad\forall \eta,\omega\in L^0(T^*\X).
\end{equation}
\textsc{Case }$p\ge 2$. From the inequality $\|x\|_p\leq \|x\|_2$ valid for any $x\in \R^2$ we see that for any $\eta,\omega\in L^0(T^*\X)$ we have
\begin{equation}
\label{eq:p2}
\Big| \frac{\omega+\eta}{2}\Big|^p+  \Big|\frac{\omega-\eta}{2}\Big|^p \le \Big(\Big| \frac{\omega+\eta}{2}\Big|^2+  \Big|\frac{\omega-\eta}{2}\Big|^2 \Big)^{p/2}\stackrel{\eqref{eq:infhiln}}= \Big( \frac{|\eta|^2}{2}+\frac{|\omega|^2}{2}\Big)^{p/2}  \leq \frac{|\omega^n|^p}{2} + \frac{|\eta^n|^p}{2}, 
\end{equation}
having used the fact that $\R^+\ni t\mapsto  t^{\frac p2}$ is convex in the last step. Integrating we deduce the first in \eqref{eq:cla}.

\textsc{Case } $p\in (1,2]$. Obviously  $p \le 2 \le q$ and thus for any $\eta,\omega\in L^0(T^*\X)$ we have
\begin{equation}
\label{eq:perrev}
\Big| \frac{\omega+\eta}{2}\Big|^q+  \Big|\frac{\omega-\eta}{2}\Big|^q\stackrel{\eqref{eq:p2}}\leq \Big( \frac{|\eta|^2}{2}+\frac{|\omega|^2}{2}\Big)^{q/2}\stackrel{*}\leq   \Big( \frac{|\eta|^p}{2}+\frac{|\omega|^p}{2}\Big)^{q/p},
\end{equation}
where in the starred inequality we used the fact that $\|x\|_2\leq \|x\|_p$  for any $x\in \R^2$.

Now suppose we already know the reverse triangle inequality in $L^r$ spaces for $r\in(0,1)$, i.e.\  that for $f,g$ Borel non-negative it holds
\begin{equation}
\label{eq:revlp}
\Big(\int f^r\,\d\mm\Big)^{\frac1r}+\Big(\int g^r\,\d\mm\Big)^{\frac1r}\leq \Big(\int (f+g)^r\,\d\mm\Big)^{\frac1r}
\end{equation}
and apply it with $r:=\frac pq$, $f:=\frac12|\omega+\eta|^p$ and $g:=\frac12|\omega+\eta|^p$ to obtain
\[ 
\Big(\Big\| \frac{\omega+\eta}{2} \Big\|_{L^p(T^*\X)}^q + \Big\| \frac{\omega-\eta}{2} \Big\|_{L^p(T^*\X)}^q \Big)^{p/q}\leq \int\Big(  \Big| \frac{\omega+\eta}{2}\Big|^q +  \Big|\frac{\omega-\eta}{2}\Big|^q \Big)^{p/q}\, \d \mm .
\]
This and  \eqref{eq:perrev} give the second in \eqref{eq:cla}, thus it remains to prove \eqref{eq:revlp}. Putting $\varphi(x):=(f^r(x),g^r(x))$ and $\Psi(a,b):=(a^{\frac 1r}+b^{\frac 1r})^r$, \eqref{eq:revlp} takes the form
\[
\Psi\Big(\int\varphi\,\d\mm\Big)\leq\int\Psi\circ\varphi\,\d\mm.
\]
Now observe that since $\Psi:\R^2\to\R$ is convex and positively 1-homogeneous we have
\[
\Psi=\sup_{\ell\leq\Psi,\ \ell\text{ linear}}\ell,
\]
therefore
\[
\Psi\Big(\int\varphi\,\d\mm\Big)=\sup_\ell \ell\Big(\int\varphi\,\d\mm\Big)=\sup_\ell \int\ell\circ \varphi\,\d\mm\leq \int\Psi\circ\varphi\,\d\mm
\]
and the conclusion follows.
\end{proof}

In presence of infinitesimal Hilbertianity and independence of $p$-upper gradients in the weak sense, we can naturally define a linear notion of gradient:
\begin{theorem}[Universal gradient]\label{thm:gradqindepen}
Let $\Xdm$ be a infinitesimal Hilbertian metric measure space with $p$-independent weak upper gradients in weak sense. Then there is a unique map $\nabla \colon \cup_{p \in (1,\infty)} S^{p}_{loc}(\X) \to L^0(T\X)$, called \emph{universal gradient}, such that for any $p \in (1,\infty)$ it holds
\begin{itemize}
\item[{\rm i)}] The restriction of $\nabla$ on $S^{p}_{loc}(\X)$ is linear;
\item[{\rm ii)}] For any $f\in S^{p}_{loc}(\X)$, it holds $\d f(\nabla f) = |\nabla f|_*^2 = |\d f|^2$ $\mm$-a.e.;
\item[{\rm iii)}] The space $\{\nabla  f\colon f \in W^{1,p}(\X)\}$ generates $L^0(T\X)$ as a module.
\end{itemize}
\end{theorem}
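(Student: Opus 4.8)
The plan is to obtain $\nabla$ by composing the universal differential $\d$ of Theorem \ref{thm:dpindepenw} with the ``musical'' isomorphism coming from the Hilbert module structure of the universal cotangent module. As a preliminary, note that infinitesimal Hilbertianity together with the pointwise isometry $I_p$ built in the proof of Theorem \ref{thm:dpindepenw} forces \eqref{eq:infhiln}, i.e.\ $L^0(T^*\X)$ is a Hilbert module. Hence it carries a \emph{pointwise scalar product} $\langle\omega,\eta\rangle:=\tfrac14\big(|\omega+\eta|^2-|\omega-\eta|^2\big)$ which (see \cite{Gigli14}) is $L^0(\mm)$-bilinear, symmetric, satisfies $\langle\omega,\omega\rangle=|\omega|^2$ and obeys the pointwise Cauchy--Schwarz inequality $|\langle\omega,\eta\rangle|\leq|\omega|\,|\eta|$ $\mm$-a.e..

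Next I would introduce $\sharp\colon L^0(T^*\X)\to L^0(T\X)=(L^0(T^*\X))^*$ by declaring $\omega^\sharp(\eta):=\langle\omega,\eta\rangle$. A direct check shows that $\omega^\sharp$ is $L^0(\mm)$-linear and continuous, that $\omega\mapsto\omega^\sharp$ is $L^0(\mm)$-linear, and that $|\omega^\sharp|_*=|\omega|$ $\mm$-a.e.\ (the bound $\leq$ is Cauchy--Schwarz, the reverse one follows testing against $\eta=\omega$); thus $\sharp$ is a pointwise-isometric $L^0(\mm)$-linear embedding. The Riesz representation theorem for Hilbert modules (\cite{Gigli14}, where the natural Riesz map of any Hilbert module is shown to be a module isomorphism) then upgrades $\sharp$ to a surjective isometry, and I would set $\nabla:=\sharp\circ\d$ on $\cup_{p\in(1,\infty)}S^p_{loc}(\X)$, that is $\nabla f:=(\d f)^\sharp$. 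This surjectivity of $\sharp$ is the only non-formal ingredient of the argument and the step I would regard as the actual crux; everything else rests on Theorem \ref{thm:dpindepenw}.

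The remaining verifications are formal. Property {\rm i)} follows from linearity of $\d\restr{S^p_{loc}(\X)}$ (Theorem \ref{thm:dpindepenw}{\rm i)}) and $L^0(\mm)$-linearity of $\sharp$. For {\rm ii)}, recalling that for $\omega\in L^0(T^*\X)$ and $X\in L^0(T\X)$ one sets $\omega(X):=X(\omega)$, we have $\d f(\nabla f)=(\nabla f)(\d f)=(\d f)^\sharp(\d f)=\langle\d f,\d f\rangle=|\d f|^2$, while $|\nabla f|_*=|(\d f)^\sharp|_*=|\d f|=|Df|_p$ by Theorem \ref{thm:dpindepenw}{\rm ii)}, giving the three claimed equalities $\mm$-a.e.. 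For {\rm iii)}, since $\{\d f:f\in W^{1,p}(\X)\}$ generates $L^0(T^*\X)$ and $\sharp$ is a module isomorphism, the image $\{\nabla f:f\in W^{1,p}(\X)\}$ generates $L^0(T\X)$. Uniqueness is already a consequence of {\rm ii)}: if $\tilde\nabla$ satisfies {\rm ii)}, write $\tilde\nabla f=\omega^\sharp$ with $\omega\in L^0(T^*\X)$ via Riesz; then {\rm ii)} gives $\langle\omega,\d f\rangle=|\d f|^2$ and $|\omega|=|\d f|$ $\mm$-a.e., and equality in the pointwise Cauchy--Schwarz inequality forces $\omega=\d f$ $\mm$-a.e.\ (on $\{|\d f|>0\}$ the two covectors are pointwise parallel with equal pointwise norm, on $\{|\d f|=0\}$ both vanish), whence $\tilde\nabla f=\nabla f$.
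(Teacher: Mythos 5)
Your proposal is correct and follows essentially the same route as the paper: the paper's proof also observes that infinitesimal Hilbertianity plus Theorem \ref{thm:dpindepenw} makes $L^0(T^*\X)$ a Hilbert module and then simply sets $\nabla f:=\mathcal R(\d f)$, where $\mathcal R$ is the Riesz isomorphism (your map $\sharp$). Your write-up just spells out the verifications and the uniqueness-via-Cauchy--Schwarz argument that the paper leaves implicit.
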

\begin{proof} The assumption of infinitesimal Hilbertianity (together with Theorem \ref{thm:dpindepenw} and its proof) ensures that $L^0(T^*\X)$ is a Hilbert module: let $\mathcal R:L^0(T^*\X)\to L^0(T\X)$ be the Riesz isomorphism. 

Then from Theorem \ref{thm:dpindepenw} above it is clear that $\nabla f:=\mathcal R(\d f)$ satisfies the requirements.
\end{proof}

\section{Spaces with the (BIP) and consequences}\label{Sec5}
In this section we list some spaces and conditions that we can relate to the bounded interpolation property and we analyze the consequences. 

\subsection{Non branching MCP-spaces}
We start recalling the definition of distortion coefficient. For every $K\in(0,\infty), N\in (0,\infty), t \in [0,1]$ set
\[
\sigma^{(t)}_{K,N}(\theta) := \begin{cases} 
\ +\infty, &\text{if } K\theta^2 \ge N\pi^2,\\
\ \frac{\sin(t\theta\sqrt{K/N})}{\sin(\theta\sqrt{K/N)}}, &\text{if } 0<K\theta^2<N\pi^2,\\
\ t, &\text{if } K\theta^2<0\text{ and } N=0 \text{ or if } K\theta^2=0, \\
\ \frac{\sinh(t\theta\sqrt{-K/N})}{\sinh(\theta\sqrt{-K/N)}}, &\text{if } K\theta^2\le 0 \text{ and }N>0.\\
\end{cases}
\]
Set, for $N>1$, $\tau^{(t)}_{K,N}(\theta) := t^{\frac{1}{N}}\sigma^{(t)}_{K,N-1}(\theta)^{1-\frac{1}{N}}$ while $\tau^{(t)}_{K,1}(\theta) =t$ if $K\le 0$ and $\tau^{(t)}_{K,1}(\theta)=\infty$ if $K>0$.
Finally, for $\mu \in \cP(\X)$ and $N \in [1,\infty)$, we define the $N$-\emph{R\'enyi relative entropy} with respect to $\mm$ by 
\[ \mathcal{U}_N(\mu | \mm) := -\int \rho^{1-\frac{1}{N}}\, \d \mm, \qquad \text{if } \mu = \rho \mm+\mu^s,\quad\mu^s\perp\mm,\]
and the \emph{Shannon entropy} by
\[ {\rm Ent}_\mm(\mu) := \int \rho\log \rho \,\d \mm, \qquad \text{if} \mu = \rho\mm, \quad \infty\text{ otherwise}. \]

Here, we report the definition of the \emph{measure contraction property}, as introduced independently in \cite{Ohta07} and \cite{Sturm06II}.
\begin{definition}[$\mcp$-spaces]\label{def:MCP}
We say that a metric measure space $\Xdm$ satisfies the \emph{measure contraction property} $\mcp(K,N)$ for some $K \in \R, N \in [1,\infty)$ if for any $\mu_0=\rho_0\mm \in \cP_2(\X)$ absolutely continuous with bounded support contained in $\supp(\mm)$ and $o \in \supp(\mm)$, there exists $\pi \in \OptGeo_2(\mu_0,\delta_o)$ so that
\[ \mathcal{U}_N(\mu_t) \le - \int \tau^{(1-t)}_{K,N}(\sfd(x,o))\rho_0^{1-\frac 1N}\, \d \mm,\qquad \forall t \in [0,1),\]
having set $\mu_t:= (\e_t)_\sharp \pi$.
\end{definition}

In this note we will couple the $\mcp$-class with the \emph{non branching} condition: a geodesic metric space $(\X,\sfd)$ is called non branching, provided
\[ \gamma,\theta \text{ geodesic with } \gamma\restr{[0,t]}=\theta\restr{[0,t]}  \text{ for some }t\in(0,1) \quad \Rightarrow \quad \gamma \equiv \theta \text{ on }[0,1].\]
The reason is that the non branching $\mcp$-class enjoys good properties of Wasserstein geodesics that in turns implies deep analytical consequences. We start by recalling that the two definitions of the $\mcp$-condition given in \cite{Ohta07} and \cite{Sturm06II} coincide under the non branching assumption (actually, under the weaker $2$-essentially non branching assumption, a technical property investigated in \cite{RajalaSturm12} that we shall never employ in this note). Then, a non branching $\mcp(K,N)$ metric measure space $\Xdm$ is locally uniformly doubling and supports a weak local $(1,1)$-Poincar\'e inequality \cite{Ohta07,vRS08}. In particular, it is proper. 

\begin{remark}\label{rem:MCPindepend}
\rm Notice that Definition \ref{def:MCP} is \emph{independent} on the particular choice $q=2$. The first observation is that, if $o \in \supp(\mm)$, the set $\OptGeo_q(\mu_0,\delta_o)$ is independent on $q$ when $\mu_0=\rho_0\mm$ with $\rho_0 \in L^\infty(\mm)$ and of bounded support. Indeed, in this case
\[ \mu_0 \in \cP_q(\X)\qquad \text{and}\qquad W_q^q(\mu_0,\delta_o) = \int \sfd^q(x,o)\, \d \mu_0(x), \quad \forall q \in(1,\infty).\]
The verification being that the plan $\mu_0 \otimes \delta_0$ is the only admissible coupling between the two marginal, and therefore it must be optimal for any $q$. Then, it is straightforward to see that if $\pi \in\OptGeo_2(\mu_0,\delta_0)$, then $(\e_1)_\sharp \pi = \delta_o$ and therefore $\pi \in \OptGeo_q(\mu_0,\delta_0)$ for all $q \in (1,\infty)$. This discussion automatically shows that the choice of $q=2$ in Definition \ref{def:MCP} is irrelevant.
\fr
\end{remark} 

Finally, we state and prove an extension to abritrary $q$ of a result present in \cite{CavMon17} (see also \cite{Kell17}). This will directly imply the (BIP) condition.
\begin{theorem}\label{thm:MCPestim}
Let $\Xdm$ be a non branching $\mcp(K,N)$-space for some $K \in \R, N \in [1,\infty)$. Then, for every $q \in (1,\infty)$, $D>0$ and $\mu_0,\mu_1 \in \cP_q(\X)$ with $\mu_0=\rho_0\mm, \rho_0 \in L^\infty(\mm)$, and diam$(\supp(\mu_0)\cup\supp(\mu_1))<D$, there exists $\pi \in \OptGeo_q(\mu_0,\mu_1)$ with $\mu_t:=(\e_t)_\sharp \pi \ll \mm$ and
\begin{equation} \|\rho_t\|_{L^\infty(\mm)} \le \frac{1}{(1-t)^N}e^{Dt\sqrt{(N-1)K^-}}\| \rho_0\|_{L^\infty(\mm)},\qquad \forall t \in [0,1), \label{eq:MCPrhot}
\end{equation}
having set $\rho_t := \frac{\d \mu_t}{\d \mm}$ for $t<1$.
\end{theorem}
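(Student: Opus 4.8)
The plan is to reduce the statement to the special case $\mu_1=\delta_o$ and to convert the defining $\mcp(K,N)$ inequality (Definition~\ref{def:MCP}) into the pointwise density bound \eqref{eq:MCPrhot}. The structural observation that makes the reduction painless is Remark~\ref{rem:MCPindepend}: for a Dirac target $\delta_o$ with $o\in\supp(\mm)$ and an absolutely continuous source of bounded density and bounded support, both the $\mcp$ inequality and the class $\OptGeo_q(\cdot,\delta_o)$ are insensitive to $q$. Hence the genuinely analytic part is carried out as in \cite{CavMon17,Kell17} (with the quantitative coefficient tracked explicitly), and the extension to an arbitrary target is a $q$-independent disintegration/approximation scheme.

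\emph{Step 1 (Dirac target).} Let $\nu_0=\sigma_0\mm$ be a probability with $\sigma_0\in L^\infty(\mm)$, $o\in\supp(\mm)$ and $\sfd(\cdot,o)<D$ on $\supp(\nu_0)$. By Definition~\ref{def:MCP} and Remark~\ref{rem:MCPindepend} there is $\pi\in\OptGeo_q(\nu_0,\delta_o)$ with $\nu_t:=(\e_t)_\sharp\pi=\sigma_t\mm$ (absolute continuity being forced by finiteness of the right hand side) and $\mathcal{U}_N(\nu_t)\le-\int\tau^{(1-t)}_{K,N}(\sfd(x,o))\,\sigma_0^{1-\frac1N}\,\d\mm$ for $t\in[0,1)$. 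Using $\sigma^{(s)}_{K,N-1}(\theta)\ge s$ for $K\ge0$ (concavity of $\sin$) and $\sinh(sx)\ge s\,e^{-(1-s)x}\sinh(x)$ for $K<0$ (concavity of $u\mapsto 1-e^{-2xu}$), one checks $\tau^{(1-t)}_{K,N}(\theta)\ge(1-t)\,e^{-t\theta\sqrt{(N-1)K^-}/N}$ for all $\theta\ge0$ and $t\in[0,1)$; write $\tau_*=\tau_*(t,D)\ge(1-t)\,e^{-tD\sqrt{(N-1)K^-}/N}$ for the infimum of $\theta\mapsto\tau^{(1-t)}_{K,N}(\theta)$ over $[0,D)$. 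Now fix a Borel $E$, let $\Gamma_E:=\{\gamma:\gamma_t\in E\}$, and observe that the renormalized restriction $\pi\restr{\Gamma_E}/\pi(\Gamma_E)$ is an optimal geodesic plan from an absolutely continuous measure of bounded density to $\delta_o$, hence the unique such plan on a non-branching $\mcp$-space, hence coincides with the plan provided by Definition~\ref{def:MCP} for its own marginals; applying the $\mcp$ inequality to it and using homogeneity of $\mathcal{U}_N$ gives $\int_E\sigma_t^{1-\frac1N}\d\mm\ge\tau_*\int\tilde\sigma_0^{1-\frac1N}\d\mm\ge\tau_*\|\sigma_0\|_\infty^{-1/N}\nu_t(E)$, where $\tilde\sigma_0\le\sigma_0$ is the density of $(\e_0)_\sharp(\pi\restr{\Gamma_E})$. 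Choosing $E=\{\sigma_t>M\}$ with $\mm(E)>0$ and using $\int_E\sigma_t^{1-\frac1N}\le M^{-1/N}\nu_t(E)$ yields $M\le\tau_*^{-N}\|\sigma_0\|_\infty$; therefore $\|\sigma_t\|_\infty\le\tau_*^{-N}\|\sigma_0\|_\infty\le(1-t)^{-N}e^{tD\sqrt{(N-1)K^-}}\|\sigma_0\|_\infty$, which is \eqref{eq:MCPrhot} in this case. (Alternatively one may invoke Ohta's set-contraction formulation of $\mcp$ together with essential injectivity of $\e_t$ on non-branching spaces, which yields the same density bound directly.)

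\emph{Step 2 (general target).} Given $\mu_0=\rho_0\mm$, $\mu_1$ and $D$ as in the statement, the diameter constraint makes both supports bounded, so one may pick finitely supported $\mu_1^k=\sum_i c_i^k\delta_{o_i^k}\to\mu_1$ in $W_q$ with $o_i^k\in\supp(\mm)$ and $\diam(\supp(\mu_0)\cup\supp(\mu_1^k))<D$ for every $k$. Fix $\alpha^k\in\Opt_q(\mu_0,\mu_1^k)$ and disintegrate its first marginal over the second, $\alpha^k=\sum_i c_i^k(\mu_0^{k,i}\otimes\delta_{o_i^k})$; since $c_i^k\mu_0^{k,i}=\sigma_0^{k,i}\mm$ with $0\le\sigma_0^{k,i}\le\rho_0$, each $\mu_0^{k,i}$ is a probability with density $\le\|\rho_0\|_\infty/c_i^k$ and bounded support. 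Applying Step 1 to $(\mu_0^{k,i},\delta_{o_i^k})$, scaling by $c_i^k$ and gluing produces $\pi^k:=\sum_i c_i^k\pi^{k,i}$, concentrated on geodesics, with $(\e_0,\e_1)_\sharp\pi^k=\alpha^k$, so $\pi^k\in\OptGeo_q(\mu_0,\mu_1^k)$. For $t\in(0,1)$ the evaluation $\e_t$ is essentially injective on the support of the optimal plan $\pi^k$ (no crossing of transport geodesics on a non-branching space), and since geodesics belonging to distinct pieces end at distinct points $o_i^k$, the densities of the measures $c_i^k(\e_t)_\sharp\pi^{k,i}$ are pairwise $\mm$-essentially disjointly supported; hence $(\e_t)_\sharp\pi^k=\rho_t^k\mm$ with $\|\rho_t^k\|_\infty\le\sup_i c_i^k\|(\e_t)_\sharp\pi^{k,i}/\mm\|_\infty\le\tau_*^{-N}\|\rho_0\|_\infty$, uniformly in $k$ and in the number of atoms.

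\emph{Step 3 (limit) and main obstacle.} The $\pi^k$ are concentrated on geodesics in a fixed bounded region, so $\rmKe_q(\pi^k)=W_q^q(\mu_0,\mu_1^k)$ is bounded and $(\e_t)_\sharp\pi^k\le\tau_*^{-N}\|\rho_0\|_\infty\mm$ uniformly; arguing exactly as in the proof of Lemma~\ref{lem:approxPol} via Prokhorov's Theorem~\ref{thm:prokh}, $(\pi^k)$ is tight. Any weak subsequential limit $\pi$ is concentrated on $\Geo(\X)$ (a closed subset of $C([0,1],\X)$), has marginals $\mu_0$ and $\mu_1$, satisfies $\rmKe_q(\pi)\le\liminf_k W_q^q(\mu_0,\mu_1^k)=W_q^q(\mu_0,\mu_1)$ by \eqref{eq:KEqlsc} and $W_q$-continuity, and since the opposite inequality always holds for a dynamical plan, \eqref{eq:costgeo} gives $\pi\in\OptGeo_q(\mu_0,\mu_1)$; passing to the limit in the compression bound yields $(\e_t)_\sharp\pi\le\tau_*^{-N}\|\rho_0\|_\infty\mm$, i.e.\ \eqref{eq:MCPrhot}. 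The delicate point of the whole argument is keeping the Step~1 density bound uniform when the pieces of Step~2 are reassembled — a naive estimate would pick up a factor equal to the number of atoms — and this is precisely where the non-branching hypothesis is indispensable, through essential injectivity of the time-$t$ evaluations on optimal dynamical plans. A secondary subtlety, which is why a direct disintegration of $\mu_1$ does not suffice and the finite-support approximation is needed, is that the conditionals of $\mu_0$ given a non-atomic target need not be absolutely continuous, so Step~1 would not be applicable to them.
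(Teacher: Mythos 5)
Your proposal is correct in substance but follows a genuinely different route from the paper's. The paper black-boxes the Dirac-target case: it quotes the interpolation density estimate of \cite[Theorem 1.1]{CavMon17} for $q=2$ and upgrades it to arbitrary $q$ via Remark \ref{rem:MCPindepend}; it then treats finitely-atomic targets by first checking qualitative non-degeneracy so as to apply \cite[Theorem 2.1]{CM15}, which produces an optimal \emph{map} $T$, decomposes $\mu_0$ along the sets $T^{-1}(x_j)$, and proves that the interpolated pieces have $\mm$-essentially disjoint supports by a contradiction argument based on uniqueness of optimal couplings; the final approximation/compactness step is the same as yours. You instead (a) rederive the Dirac-target bound directly from Definition \ref{def:MCP} by restricting the plan over $\{\gamma:\gamma_t\in E\}$ and tracking the distortion coefficients explicitly (your lower bound $\tau^{(1-t)}_{K,N}(\theta)\ge(1-t)\,e^{-t\theta\sqrt{(N-1)K^-}/N}$ is correct and reproduces exactly the constant in \eqref{eq:MCPrhot}), and (b) replace the optimal-map machinery by disintegrating an \emph{arbitrary} optimal coupling over the atoms and obtaining disjointness of the time-$t$ marginals from the non-crossing lemma: on a non-branching space, two geodesics in the support of an optimal dynamical plan that meet at an intermediate time coincide, hence $\e_t$ is injective there and pieces ending at distinct atoms stay disjoint. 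Point (b) is arguably cleaner, as it avoids both \cite{CM15} and the non-degeneracy step; the non-crossing lemma for general $q$ (via $\sfd^q$-cyclical monotonicity, strict convexity of $r\mapsto r^q$ and non-branching) is standard but should be stated or cited, since the paper's own disjointness argument goes through uniqueness of optimal maps instead.

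Two points in your Step 1 need care. First, transferring the $\mcp$ inequality to the restricted plan $\pi\restr{\Gamma_E}/\pi(\Gamma_E)$ requires that this plan coincide with the one provided by Definition \ref{def:MCP} for its marginals, i.e.\ uniqueness of the optimal dynamical plan from an absolutely continuous source to $\delta_o$. On non-branching $\mcp$ spaces this is true (it amounts to $\mm$-a.e.\ uniqueness of geodesics ending at $o$, the fact underlying the equivalence of Ohta's and Sturm's definitions under non-branching), but it is a nontrivial input which you assert in a single clause; it should be proved or attributed to \cite{Ohta07}/\cite{CavMon17}, or bypassed through the alternative via Ohta's set-contraction formulation that you mention parenthetically. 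Second, absolute continuity of $\mu_t$ is not ``forced by finiteness of the right hand side'': the functional $\mathcal{U}_N$ ignores the singular part, so the $\mcp$ inequality alone does not rule it out. Your own restriction argument does, however, deliver it: choosing $E$ with $\mm(E)=0$ and $\mu_t(E)>0$ makes the left-hand side of your entropy estimate vanish while the right-hand side is strictly positive. With these repairs the argument is complete and yields the stated bound with the same constant as the paper.
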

\begin{proof}
For $q=2$, the statement is proved in \cite[Theorem 1.1]{CavMon17}. Here, we give some details to handle the general case.

\noindent{\sc Step 1}. We begin by showing that any $\Xdm$ as in the hypothesis is `qualitatively non degenerate' according to the axiomatization given in \cite[Assumption 1]{CM15} (actually, under non branching it is equivalent \cite[Corollary 5.17]{Kell17}). Indeed, let $K\subset \X$ be compact, $A\subset K$ non negligible and $x \in K$. Then, denoting by $A_{t,x} \subset C([0,1],\X)$ the subset of geodesics linking $A$ to $x$, we denote $\mu_0 =\mm(A)^{-1}\mm\restr{A}$ and appeal to \cite[Theorem 1.1]{CavMon17} to get that there exists $\pi \in \OptGeo_2(\mu_0,\delta_x)$ so that
\[ 1=\mu_t(A_{t,x}) \le \frac{1}{(1-t)^N}e^{Dt\sqrt{(N-1)K^-}}\frac{\mm(A_{t,x})}{\mm(A)}, \qquad \forall t \in [0,1),\]
where $D:= $ diam $(K) <\infty $, having used that $\mu_t$ is a probability measure concentrated on $A_{t,x}$. In particular, this shows that there exists a profile function $f \colon [0,1]\to (0,1]$ (depending on the compact set $K$) and a positive $\delta < 1$ so that
\[ \mm(A_{t,x}) \ge f(t) \mm(A), \qquad \forall t \in [0,\delta].\]
That is, $\Xdm$ verifies Assumption $1$ in \cite{CM15}.

\noindent{\sc Step 2}. Suppose now $\mu_1= \delta_x$, for $x \in \supp(\mm)$. Then, from \cite{CavMon17}, we know that there exists $\pi \in \OptGeo_2(\mu_0,\delta_x)$ satisfying \eqref{eq:MCPrhot} hence, by Remark \ref{rem:MCPindepend}, we have also that $\pi \in \OptGeo_q(\mu_0,\delta_x)$.

\noindent{\sc Step 3}. Let here $n \in \N$ and suppose $\mu_1$ is a finite convex combination of Dirac masses, namely $\mu_1 := \sum_{j=1}^n\lambda_j \delta_{x_j}$ for $(x_j)\subset \X $ with $x_i\neq x_i$ for $i\neq j$ and $(\lambda_j)\subset [0,1]$ with $\sum_{j=1}^n\lambda_j=1$. Then, by appealing to {\sc Step 1}, we are in position to apply \cite[Theorem 2.1]{CM15} (recall $\X$ is proper) to deduce that there exists a unique optimal coupling between $\mu_0$ and $\mu_1$ and it is induced by a Borel map $T \colon \X\to \X$, i.e.
\[ W_q^q(\mu_0,\mu_1) = \int \sfd^q(x,T(x))\, \d \mu_0(x). \]
Take this map $T$ and define $\mu_0^j := \mu_0 \restr{T^{-1}(x_j)}$ for every $j =1,...,n$. By {\sc Step 2}, we know that there are $\pi^j \in \OptGeo_q(\lambda_j^{-1}\mu_0^j,\delta_{x_j})$ verifying 
\[ \|\rho^j_t\|_{L^\infty(\mm)} \le \frac{1}{(1-t)^N}e^{Dt\sqrt{(N-1)K^-}}\| \rho^j_0\|_{L^\infty(\mm)},\qquad \forall t \in [0,1),\  j=1,...,n,
\]
having set $\rho^j_t := \frac{\d \mu^j_t}{\d \mm}$ and $\mu_t^j := (\e_t)_\sharp \pi^j$ for $t<1$.

We now define $\pi := \sum_{j=1}^n \lambda_j\pi^j$ so that, by construction, we have that $(\e_0,\e_1)_\sharp \pi \in \Adm(\mu_0,\mu_1)$ and $\mu_t:= (\e_t)_\sharp \pi \ll \mm$ for every $t <1$ with $\rho_t:= \frac{\d \mu_t}{\d \mm}  = \sum_{j=1}^n \lambda_j \rho^j_t $. We now claim that $\rho_t$ satisfies \eqref{eq:MCPrhot}. To this aim, we instead check that
\[ \mm(\{ \rho^i_t>0 \} \cap \{ \rho^j_t>0\} )) =0,\qquad \forall t \in (0,1), j\neq i,\]
as the latter property implies the claim by construction of $\mu_t$. Suppose the above is not true, namely there exists $\tau \in (0,1)$ so that $\mm(\{ \rho^i_\tau>0 \} \cap \{ \rho^j_\tau>0\} )) >0$ for some $j\neq i$. Then there would exist an optimal coupling between the (renormalized) measures $\mm\restr{\{ \rho^i_\tau>0 \} \cap \{ \rho^j_\tau>0\} }$ and $\delta_{x_j}+\delta_{x_i}$ that is not induced by a map. This finds a contradiction for what we have previously proved and concludes the step.

\noindent{\sc Step 4}. Since $(\X,\sfd)$ is separable, given any $\mu_1$ as in the hypothesis, we can find a sequence of points $(x_j)_{j \in \N}\subset \supp(\mu_1)$ and weights $(\lambda_{n,j})_{j \in \N}\subset [0,1]$ so that
\[ \sum_{j=1}^n\lambda_{n,j}\delta_{x_j} =:\mu_1^n \to \mu_1 \qquad \text{in } W_q,\]
as $n$ goes to infinity, recalling that the support of $\mu_1$ is bounded and consequently \eqref{eq:Wqweak}. By {\sc Step 3}, we know that there exists a sequence $\pi_n \in \OptGeo_q(\mu_0,\mu_1^n)$ verifying $\mu^n := \rho_t^n\mm$ and \eqref{eq:MCPrhot} for every $t <1, n \in \N$. Finally, arguing as in \cite[Lemma 4.4]{CavMon17} (the proof is written for $q=2$ but works for arbitrary $q$, see also Lemma \ref{lem:pipmGH}) we get the existence of a weak limit $\pi \in \OptGeo_q(\mu_0,\mu_1)$ of $\pi_n$ verifying all the required properties.
\end{proof}
%
%
%
A direct implication of the above Theorem is the following:
\begin{theorem}	\label{thm:BIPMCP}
Let $\Xdm$ be a non branching $\mcp(K,N)$-space for some $K \in \R, N \in [1,\infty)$. Then, it is a BIP-space with profile function $D\mapsto 2^Ne^{D\sqrt{(N-1)K^-}}$ and consequently:

\begin{itemize}
\item[{\rm i)}] $\Xdm$ has $p$-independent weak upper gradients in the strong sense;
\item[{\rm ii)}] there exists a unique couple $(L^0(T^*\X),\d)$ of universal cotangent module and differential;
\item[{\rm iii)}] there exists an $\infty$-plan $\ppi_{\sf master}$ concentrated on geodesics so that: 

for every $p \in (1,\infty)$, $f$ Borel and $G\in L^p(\mm)$ with $G\ge 0$, the following are equivalent
\begin{itemize}
\item[$\circ$] $f \in S^p(\X)$ and $G$ is a $p$-weak upper gradient;
\item[$\circ$] it holds
\[ |f(\gamma_1)-f(\gamma_0)| \le \int_0^1 G(\gamma_t)\msp \, \d t,\qquad \ppi_{\sf master}\text{-a.e. }\gamma.\]
\end{itemize}
\end{itemize} 
\end{theorem}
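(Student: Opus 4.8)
The plan is to first deduce the (BIP) from Theorem \ref{thm:MCPestim} by a time symmetrisation, and then read off i)--iii). Fix $q\in(1,\infty)$, $D>0$ and $\mu_i=\rho_i\mm\in\cP(\X)$, $i=0,1$, with bounded densities and $\diam(\supp(\mu_0)\cup\supp(\mu_1))<D$. Theorem \ref{thm:MCPestim} applied to $(\mu_0,\mu_1)$ gives a plan in $\OptGeo_q(\mu_0,\mu_1)$ whose time-$t$ density is $\le\frac{1}{(1-t)^N}e^{Dt\sqrt{(N-1)K^-}}\|\rho_0\|_{L^\infty}$, while applying it to $(\mu_1,\mu_0)$ and composing with $({\sf Restr}_1^0)_\sharp$ gives one whose time-$t$ density is $\le\frac{1}{t^N}e^{D(1-t)\sqrt{(N-1)K^-}}\|\rho_1\|_{L^\infty}$. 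Since on a non branching $\mcp(K,N)$-space the element of $\OptGeo_q(\nu_0,\nu_1)$ is unique whenever $\nu_0,\nu_1$ are absolutely continuous with bounded density --- a consequence of the theory of optimal maps in essentially non branching spaces (\cite{CavMon17}, building on \cite{CM15,RajalaSturm12}; the arguments there are written for $q=2$ but, as remarked elsewhere in this paper, carry over verbatim to every $q\in(1,\infty)$) --- the two plans coincide, so the resulting plan lies in $\OptGeo_q(\mu_0,\mu_1)$ and its time-$t$ density is bounded by the \emph{minimum} of the two quantities above. As the first is $\le 2^Ne^{D\sqrt{(N-1)K^-}}\|\rho_0\|_{L^\infty}$ for $t\le\tfrac12$ and the second is $\le 2^Ne^{D\sqrt{(N-1)K^-}}\|\rho_1\|_{L^\infty}$ for $t\ge\tfrac12$, \eqref{BIP} holds with $C_q(D):=2^Ne^{D\sqrt{(N-1)K^-}}$, a profile independent of $q$; we therefore write ${\sf Geod}_q(\X)={\sf Geod}(\X)$ in what follows.

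Item {\rm i)} is then exactly Theorem \ref{thm:pind} (whose conclusion, as observed there, is equivalent to strong $p$-independence of weak upper gradients), and item {\rm ii)} is immediate from Theorem \ref{thm:dpindepenw}, since strong $p$-independence implies the weak one.

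For item {\rm iii)} I would proceed as follows. By Proposition \ref{prop:sobolevPi} together with the $q$-independence of ${\sf Geod}(\X)$, for $f$ Borel and $G\in L^p(\mm)$ nonnegative, ``$f\in S^p(\X)$ with $G$ a $p$-weak upper gradient'' is equivalent to \eqref{eq:persob} for every $\pi\in{\sf Geod}(\X)$; splitting the time interval into $m$ equal pieces and using \eqref{eq:pirestr}, this is in turn equivalent to \eqref{eq:persob} for every $\pi\in{\sf Geod}(\X)$ whose geodesics have length $<\tfrac32$, and, by the usual restriction/localisation trick, to the pointwise inequality $|f(\gamma_1)-f(\gamma_0)|\le\int_0^1 G(\gamma_t)\msp\,\d t$ holding $\pi$-a.e.\ for all such $\pi$. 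Next one exhibits a single $\infty$-plan $\ppi_{\sf master}$, concentrated on geodesics of length $\le\tfrac32$, enjoying the following \emph{richness}: every Borel $\Gamma\subset C([0,1],\X)$ with $\ppi_{\sf master}(\Gamma)=0$ is $\pi$-negligible for each $\pi\in{\sf Geod}(\X)$ carried by geodesics of length $<\tfrac32$. A natural candidate is built from the (BIP): fix a Borel $g\colon\X\to(0,\infty)$ with $g\in L^\infty(\mm)$ and $\int g\,\d\mm=1$ (which exists, $\mm$ being $\sigma$-finite), let $\pi^x\in\OptGeo_q\big(\mm(B_{1/2}(x))^{-1}\mm\restr{B_{1/2}(x)},\ \mm(B_1(x))^{-1}\mm\restr{B_1(x)}\big)$ be provided by \eqref{BIP}, and set $\ppi_{\sf master}:=\int_\X g(x)\,\pi^x\,\d\mm(x)$. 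Its geodesics have length $\le\tfrac32$, and, using the density bound $\big\|\tfrac{\d(\e_t)_\sharp\pi^x}{\d\mm}\big\|_{L^\infty}\le C(2)\,\mm(B_{1/2}(x))^{-1}$ from \eqref{BIP}, the localisation of $\supp(\pi^x)$ in $B_2(x)$, and the uniform local doubling of $\Xdm$, one checks $(\e_t)_\sharp\ppi_{\sf master}\le C\|g\|_{L^\infty}\mm$ for every $t\in[0,1]$, so $\ppi_{\sf master}$ is an $\infty$-plan. Granting its richness, the ``only if'' in {\rm iii)} holds for $\ppi_{\sf master}$ as for any $\infty$-test plan --- apply \eqref{def:Spineq} to the restrictions of $\ppi_{\sf master}$, which are $\infty$-test plans by \eqref{eq:pirescal}, and localise --- while the ``if'' follows by combining the richness with the reduction above.

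The step I expect to be hardest is the verification of the richness of $\ppi_{\sf master}$. Since $g>0$, $\ppi_{\sf master}(\Gamma)=0$ forces $\pi^x(\Gamma)=0$ for $\mm$-a.e.\ $x$, and, as each $\pi^x$ is carried by the graph of the optimal map $T^x\colon B_{1/2}(x)\to B_1(x)$ and (by non branching) by the essentially unique connecting geodesics $\gamma^{a,T^x(a)}$, this must be converted into a statement about $\mm\otimes\mm$-almost every pair $(a,b)$ with $\sfd(a,b)<1$: the key point is that, as $x$ ranges over $B_{1/2}(a)$, the images $T^x(a)$ fill a neighbourhood of $a$ in an $\mm$-absolutely continuous way (in $\R^d$ one has $T^x(a)=2a-x$; in general this rests on the fine structure of optimal maps in non branching $\mcp(K,N)$ spaces, cf.\ \cite{CavMon17,CM15}). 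Once one knows $|f(b)-f(a)|\le\int_0^1 G(\gamma^{a,b}_t)\,\sfd(a,b)\,\d t$ for $\mm\otimes\mm$-a.e.\ such $(a,b)$, \eqref{eq:persob} for a general $\pi\in{\sf Geod}(\X)$ carried by $a\mapsto\gamma^{a,T(a)}$ is recovered by approximating $\pi$ with plans obtained by ``smearing'' $T(a)$ over $B_\eps(T(a))$ --- whose endpoint couplings are absolutely continuous with respect to $\mm\otimes\mm$, so that the previous inequality applies --- and letting $\eps\downarrow0$, the passage to the limit being controlled by the density estimate \eqref{eq:MCPrhot} and by the Lebesgue differentiation theorem for $f$ and for $G$ along the interpolating geodesics (licit because $\int_0^1(\e_t)_\sharp\pi\,\d t\le C\mm$). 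The customary truncation and one-sign reductions (as in the proof of Proposition \ref{prop:sobolevPi}) then cover general $f$, and Proposition \ref{prop:sobolevPi} concludes.
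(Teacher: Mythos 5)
Your treatment of the (BIP) itself is essentially the paper's proof: apply Theorem \ref{thm:MCPestim} to $(\mu_0,\mu_1)$ and to $(\mu_1,\mu_0)$, use non branching and the uniqueness of the element of $\OptGeo_q(\mu_0,\mu_1)$ (arguing as in \cite{CavMon17}, \cite{CM15}) to identify the forward and time-reversed backward plans, and glue the two half-interval density bounds to get the profile $D\mapsto 2^Ne^{D\sqrt{(N-1)K^-}}$; item ii) via Theorem \ref{thm:dpindepenw} is also as in the paper. For item i), however, the paper does not rest only on the remark following Theorem \ref{thm:pind}: it first secures condition a) of Definition \ref{def:pindependent} by recalling that non branching $\mcp$-spaces are doubling, so that $W^{1,p}(\X)$ is reflexive by \cite{ACM14} and $\Lip_{bs}(\X)$ is dense in every $W^{1,p}(\X)$, and only then invokes Theorem \ref{thm:pind} for b) and c). Your appeal to the unproved "truncation and cut-off" equivalence is thinner: for $p_2>p_1$ a truncated, cut-off $W^{1,p_1}$-function need not have $p_2$-integrable gradient, so the density requirement a) does not follow from the conclusion of Theorem \ref{thm:pind} by that argument alone; you should follow the paper and use \cite{ACM14} here.

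The genuine gap is in item iii). In the paper this is exactly Theorem \ref{A3}, proved (through Proposition \ref{A2}) by a soft argument: separability of $\cP(C([0,1],\X))$ and Prokhorov tightness (Theorem \ref{thm:prokh}) produce a countable family $\mathcal D\subset{\sf Geod}(\X)$ which suffices to detect Sobolev functions thanks to the stability Lemma \ref{lem:limPol} and Proposition \ref{prop:sobolevPi}, and a weighted sum of suitable restrictions of these plans yields $\ppi_{\sf master}$; no structural information on optimal maps is needed. Your explicit construction $\ppi_{\sf master}:=\int g(x)\pi^x\,\d\mm(x)$ hinges on the asserted ``richness'', and that step is not proved: it rests on the claim that, as $x$ varies, the endpoints $T^x(a)$ sweep out a neighbourhood of $a$ in an $\mm$-absolutely continuous way, which is a Euclidean picture ($T^x(a)=2a-x$) with no available counterpart in a general non branching $\mcp(K,N)$ space — \cite{CM15}, \cite{CavMon17} give existence and uniqueness of optimal maps, but nothing about the regularity of their dependence on the marginals. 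Moreover, even in $\R^d$ your plan only charges segments of length $<\tfrac12$ (since $|T^x(a)-a|=|a-x|<\tfrac12$), so the richness you require for all plans carried by geodesics of length $<\tfrac32$ fails as stated, and further points are left open (measurable selection of $x\mapsto\pi^x$, the possibility that a plan in ${\sf Geod}(\X)$ charges geodesics between pairs joined by several geodesics different from those selected by the $\pi^x$, and the smearing limit). As written, iii) is not established; the appendix route of Proposition \ref{A2} and Theorem \ref{A3} is the way to close it.
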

\begin{proof}
We subdivide the proof in two steps.

\noindent\textsc{Step 1}. Let $q \in (1,\infty),D>0$ and $\mu_0,\mu_1 \in \cP_q(\X)$ as in the definition of the (BIP). Then, Theorem \ref{thm:MCPestim} ensures that we can find $\pi^+ \in \OptGeo_q(\mu_0,\mu_1)$ so that
\[ \|\rho^+_t\|_{L^\infty(\mm)} \le 2^N e^{D\sqrt{(N-1)K^-}}\| \rho_0\|_{L^\infty(\mm)},\qquad \forall t \in [0,\tfrac 12], \] 
and $\pi^-\in \OptGeo_q(\mu_1,\mu_0)$ so that 
\[ \|\rho^-_t\|_{L^\infty(\mm)} \le 2^N e^{D\sqrt{(N-1)K^-}}\| \rho_1\|_{L^\infty(\mm)},\qquad \forall t \in [0,\tfrac 12], \]
having denoted $\rho_t^\pm := \frac{\d (\e_t)_\sharp \pi^\pm}{\d \mm}$ for every $t \in [0,\frac 12]$. Hence, by nonbranching and arguing as in \cite{CavMon17}, the set $\OptGeo_q(\mu_0,\mu_1)$ is a singleton and therefore we can glue the \emph{forward} and \emph{backward} estimates to verify the (BIP).

\noindent\textsc{Step 2}. Finally, recalling that $\mcp$-spaces are doubling, we have thanks to \cite{ACM14} that $W^{1,p}(\X)$ is reflexive and therefore $\Lip_{bs}(\X)$ is dense in every $W^{1,p}(\X)$. Consequently, $\Xdm$ have $p$-independent weak upper gradient in the strong sense by appealing to Theorem \ref{thm:pind}. The existence of a universal cotangent module $L^0(T^*\X)$ is guaranteed by Theorem \ref{thm:dpindepenw}. Finally, {\rm iii)} is the content of Theorem \ref{A3}.
\end{proof}
%

\begin{remark}\rm
Since it is known that $\cd(K,N)$ spaces are also $\mcp(K,N)$, the conclusions of this last theorem hold in particular for  non branching $\cd(K,N)$ spaces, $N<\infty$.
\fr\end{remark}

\subsection{$\cd(K,\infty)$-spaces} The arguments of the last section do not cover the case of $\cd(K,\infty)$ spaces and actually we do not know whether the conclusions of Theorem \ref{thm:BIPMCP} hold in this case or not, due to the fact that it is unknown whether the $\cd(K,\infty)$ condition implies anything on the structure of plans in $\OptGeo_q(\mu_0,\mu_1)$ for $q\neq 2$.

Still, for $q\in(1,\infty)$ and $K\in\R$ we can consider the class of $\cd_q(K,\infty)$ spaces, defined as those spaces $(\X,\sfd,\mm)$ such that for any $\mu_0,\mu_1\in \cP_q(\X)$ absolutely continuous w.r.t.\ $\mm$ and with bounded support, there is a $W_q$-geodesic $(\mu_t)$ connecting them such that
\[
{\rm Ent}_\mm(\mu_t)\leq(1-t){\rm Ent}_\mm(\mu_t)+t{\rm Ent}_\mm(\mu_1)-\frac K2t(1-t)W_q^2(\mu_0,\mu_1)\qquad\forall t\in[0,1].
\] 
This class of spaces is relevant for our purposes because of the following result, that generalizes  the ones  in \cite{Rajala12-2} to the case of general $q\ne 2$.
\begin{theorem}\label{thm:rajalaCDqKN}
Let $\Xdm$ be a $\cd_q(K,\infty)$-space for some $K \in \R$ and $q \in (1,\infty)$. For any $D>0$ and $\rho_0 ,\rho_1 \in L^{\infty}(\mm)$ probability densities with diam$($supp$(\rho_0) \ \cup\ $supp$(\rho_1))<D$, there exists $\pi \in \OptGeo_q(\rho_0\mm,\rho_1\mm)$ satisfying $\mu_t:=(\e_t)_\sharp \pi \ll \mm$. Moreover, writing $\mu_t:= \rho_t\mm$, we have
\[ \|\rho_t\|_{L^\infty(\mm)} \le e^{K^-D^2/12} \|\rho_0\|_{L^\infty(\mm)}\vee \|\rho_1\|_{L^\infty(\mm)},\qquad \forall t \in [0,1].\]
\end{theorem}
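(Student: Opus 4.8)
The plan is to follow the strategy of Rajala \cite{Rajala12-2}, whose argument does not rely on non-branching and therefore survives the passage from $q=2$ to general $q\in(1,\infty)$ with essentially notational changes: first prove a \emph{midpoint estimate}, then iterate it along dyadic times, and finally pass to a limit to produce an element of $\OptGeo_q$.

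\smallskip\noindent\emph{Step 1 (midpoint lemma).} The core statement is: given a.c.\ probabilities $\mu_0=\rho_0\mm$, $\mu_1=\rho_1\mm$ with $\diam(\supp(\mu_0)\cup\supp(\mu_1))<D$ and $\rho_0,\rho_1\le M$, there is a $W_q$-midpoint $\nu=\rho_{1/2}\mm\ll\mm$ with $\|\rho_{1/2}\|_{L^\infty(\mm)}\le e^{cK^- W_q^2(\mu_0,\mu_1)}\,M$ for an explicit constant $c$. To get it, apply the $\cd_q(K,\infty)$ inequality to $\mu_0,\mu_1$ to obtain a $W_q$-geodesic $(\mu_t)$, represented as usual by a plan $\sigma$ concentrated on geodesics, satisfying the entropy-convexity bound; since $K$ enters only through the penalization $-\tfrac K2 t(1-t)W_q^2$, evaluating at $t=\tfrac12$ and using ${\rm Ent}_\mm(\rho\mm)\le\log\|\rho\|_{L^\infty(\mm)}$ one already gets ${\rm Ent}_\mm(\mu_{1/2})<\infty$, hence $\mu_{1/2}\ll\mm$. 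The $L^\infty$-bound is then extracted exactly as in \cite{Rajala12-2} by a restriction argument on the superlevel sets $\{\rho_{1/2}>\ell\}$ of the midpoint density: restricting $\sigma$ to the geodesics lying in such a set at time $\tfrac12$ and renormalizing produces endpoint marginals whose densities are controlled by $M$ over the restricted mass, and comparing the entropy created at the midpoint with the convex combination of the endpoint entropies forces the mass above the asserted level to vanish. The only change from $q=2$ is the systematic replacement of $W_2$ by $W_q$, so that the $K$-dependent factor scales like $W_q^2\le D^2$.

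\smallskip\noindent\emph{Step 2 (dyadic iteration and the constant).} Apply Step 1 to $(\mu_0,\mu_1)$ to get $\mu_{1/2}$, then to $(\mu_0,\mu_{1/2})$ and $(\mu_{1/2},\mu_1)$, and so on, defining $\mu_t$ for every dyadic $t\in[0,1]$. The midpoint property propagates inductively, so $W_q(\mu_{j2^{-k}},\mu_{(j+1)2^{-k}})=2^{-k}W_q(\mu_0,\mu_1)\le 2^{-k}D$ for all $j,k$; hence, writing $M_k$ for the supremum of the densities over the dyadic points of level $k$, Step 1 gives $M_{k+1}\le e^{cK^- 4^{-k}D^2}M_k$, whence $M_\infty\le M\exp\!\big(cK^- D^2\sum_{k\ge0}4^{-k}\big)=M\exp\!\big(\tfrac43 cK^- D^2\big)$, which is $e^{K^- D^2/12}M$ for the value of $c$ furnished by the midpoint lemma. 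Moreover $t\mapsto\mu_t$ is $W_q$-Lipschitz on the dyadics, hence extends to a $W_q$-geodesic $[0,1]\to\cP_q(\X)$, and the bound $\mu_t\le e^{K^- D^2/12}M\,\mm$ passes to every $t$ because $\{\nu:\nu\le C\mm\}$ is weakly closed.

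\smallskip\noindent\emph{Step 3 (from measures to a dynamical plan) and the obstacle.} For each $n$ glue the optimal geodesic plans between consecutive dyadic points of level $n$ (as in \cite[Lemma 2.1.1]{G11}) into a single $\pi^n$; additivity of $W_q$ along $(\mu_t)$ gives $\pi^n\in\OptGeo_q(\mu_0,\mu_1)$ with $(\e_t)_\sharp\pi^n=\mu_t$ for dyadic $t$ of level $\le n$. The family $(\pi^n)$ is tight, by the same argument used in the proofs of Lemmas \ref{lem:approxPol} and \ref{lem:pipmGH} (the energies $\rmKe_q(\pi^n)=W_q^q(\mu_0,\mu_1)$ are constant and the marginals $(\e_t)_\sharp\pi^n$ stay in a fixed $W_q$-compact set), so up to a subsequence $\pi^n\weakto\pi$; lower semicontinuity \eqref{eq:KEqlsc} together with $\rmKe_q(\pi^n)=W_q^q(\mu_0,\mu_1)$ and \eqref{eq:costgeo} give $\pi\in\OptGeo_q(\mu_0,\mu_1)$, while $(\e_t)_\sharp\pi=\mu_t$ for dyadic $t$ and hence, by $W_q$-continuity of $t\mapsto(\e_t)_\sharp\pi$, for all $t$; the density bound of Step 2 concludes. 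The genuinely delicate point is Step 1 — turning the purely qualitative $K$-convexity of the entropy into quantitative $L^\infty$ control of the midpoint density — which is Rajala's argument; the work is to check that nothing there truly uses $q=2$, and indeed the argument only sees the term $W_q^2$ and the elementary inequality ${\rm Ent}_\mm(\rho\mm)\le\log\|\rho\|_{L^\infty(\mm)}$. The dyadic iteration, the geometric-series bookkeeping that produces the constant $1/12$, and the limit passage to $\OptGeo_q$ are routine.
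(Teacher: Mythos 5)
Your overall architecture (midpoint estimate, dyadic iteration, gluing and limiting to get an element of $\OptGeo_q$) is the same as the paper's, and your Steps 2--3 are fine — indeed your limiting argument is more detailed than the paper's terse ``by completion''. The genuine gap is in Step 1, which you yourself identify as the only delicate point: the mechanism you describe is \emph{not} Rajala's argument and does not work under the weak $\cd_q(K,\infty)$ hypothesis. You propose to restrict the plan $\sigma$ furnished by the $\cd_q$ condition to the geodesics whose midpoint lies in a superlevel set $\{\rho_{1/2}>\ell\}$, renormalize, and ``compare the entropy created at the midpoint with the convex combination of the endpoint entropies'' for this restricted plan. But the $\cd_q(K,\infty)$ condition only asserts the existence of \emph{some} $W_q$-geodesic between two given absolutely continuous marginals along which the entropy is $K$-convex; it gives no convexity inequality along restrictions or sub-plans of the selected geodesic (that would be a strong-$\cd$/essentially non-branching type hypothesis). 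Consequently your argument, which would even bound the density of the midpoint of the $\cd$-selected interpolation itself, cannot be run. The paper (following \cite{Rajala12-2}) proceeds differently: the convexity inequality is used only to prove a \emph{spreading-of-mass} estimate, i.e.\ a lower bound on $\mm(\{\rho_{1/2}>0\})$ for some midpoint of any pair of bounded-density, boundedly supported measures; one then minimizes the excess-mass functional $\cF_C$ over the whole set $\cI^q_{1/2}(\mu_0,\mu_1)$ of $W_q$-midpoints (Lemmas \ref{lem:Itqclosed}, \ref{lem:FClsc}) and shows the minimum vanishes by the mass-redistribution Lemma \ref{lem:redistrmass} applied to the conditional endpoint measures sitting over the bad set. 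The good midpoint produced this way is in general \emph{not} the midpoint of the $\cd$-geodesic, and no restriction of the convexity inequality is ever invoked. Without this replacement your Step 1 is a missing proof, not a routine adaptation.

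A secondary point: your constant bookkeeping in Step 2 is not self-consistent. The midpoint estimate obtainable from the convexity inequality at $t=\tfrac12$ has exponent $\tfrac{K^-}{8}W_q^2$, i.e.\ $c=\tfrac18$ in your notation, and with that value your geometric series gives $e^{K^-D^2/6}$, not $e^{K^-D^2/12}$; the phrase ``for the value of $c$ furnished by the midpoint lemma'' is therefore doing unjustified work (to be fair, the same tension between $P(D,K)=e^{K^-D^2/8}$ and the claimed profile $e^{K^-D^2/12}$ is present in the paper's own Step 4, and one should either track Rajala's finer bookkeeping or accept the weaker constant, which in any case suffices for the (BIP)). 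Apart from this, the iteration, the $W_q$-Lipschitz extension to all $t$, and the tightness/lower-semicontinuity passage to a plan in $\OptGeo_q$ are correct and consistent with the paper.
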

The proof of this Theorem closely follows the one  in \cite{Rajala12-2} hence, not to interrupt the line of thoughts at this point, we postpone its proof to Appendix \ref{B}.

It is not known if the $\cd_q(K,\infty)$-class is independent on $q$ (this is instead the case in finite dimension with technical assumptions \cite{ACMCS21}), still as a consequence of the previous considerations we have  the following result:
\begin{theorem}
Let $K \in \R$ and suppose that $\Xdm$ is a $\cd_q(K,\infty)$-space for every $q \in (1,\infty)$. Then, it has the (BIP) with profile function $D\mapsto e^{K^-D^2/12}$.
\end{theorem}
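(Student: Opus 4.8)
The plan is to specialize Theorem \ref{thm:rajalaCDqKN} exponent by exponent. Fix $q\in(1,\infty)$; by hypothesis $\Xdm$ is a $\cd_q(K,\infty)$-space, so Theorem \ref{thm:rajalaCDqKN} applies with this $q$. Given $D>0$ and $\mu_0=\rho_0\mm$, $\mu_1=\rho_1\mm$ absolutely continuous with bounded densities and $\diam(\supp(\mu_0)\cup\supp(\mu_1))<D$, that theorem produces $\pi\in\OptGeo_q(\mu_0,\mu_1)$ with $(\e_t)_\sharp\pi=\rho_t\mm$ and
\[
\|\rho_t\|_{L^\infty(\mm)}\le e^{K^-D^2/12}\big(\|\rho_0\|_{L^\infty(\mm)}\vee\|\rho_1\|_{L^\infty(\mm)}\big),\qquad\forall t\in[0,1],
\]
which is precisely the relation \eqref{BIP} with the constant $C_q(D):=e^{K^-D^2/12}$. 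Since $K^-\ge 0$ we have $e^{K^-D^2/12}\ge 1$, so $D\mapsto C_q(D)$ is a legitimate profile function with values in $[1,\infty)$; it is moreover visibly independent of $q$, so we drop the subscript and set $C(D):=e^{K^-D^2/12}$. The assumption that $\Xdm$ be $\cd_q(K,\infty)$ for \emph{every} $q\in(1,\infty)$ — and not merely for $q=2$ — is exactly what allows us to run this argument for each exponent, thereby matching the quantifier over $q$ in Definition \ref{def:bip}; hence $\Xdm$ has the (BIP) with profile function $D\mapsto e^{K^-D^2/12}$.

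There is no genuine obstacle at this level: the entire content of the statement is contained in Theorem \ref{thm:rajalaCDqKN}, whose proof (a $q$-analogue of Rajala's argument from \cite{Rajala12-2}) is carried out in Appendix \ref{B}. The only things worth verifying are the trivial dictionary between the two formulations — ``$\mu_i$ absolutely continuous with bounded density'' in Definition \ref{def:bip} versus ``$\rho_i\in L^\infty(\mm)$ a probability density'' in Theorem \ref{thm:rajalaCDqKN}, which say the same thing — and the non-degeneracy of the constant, i.e.\ that it stays $\ge 1$, which holds because $K^-\ge 0$.
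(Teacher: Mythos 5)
Your proposal is correct and follows exactly the paper's own argument: the paper likewise proves this theorem in one line by applying Theorem \ref{thm:rajalaCDqKN} for each fixed $q$ (which is where the hypothesis ``$\cd_q(K,\infty)$ for every $q$'' is used) and reading off the $q$-independent constant $e^{K^-D^2/12}\ge 1$ as the profile function in Definition \ref{def:bip}. Your additional remarks on the dictionary between the two formulations are accurate but not substantively different from the paper's reasoning.
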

\begin{proof}
Observe that the hypotheses with Theorem \ref{thm:rajalaCDqKN} ensure the (BIP) to hold.
\end{proof}

\subsection{$\rcd$-spaces}
In this section, we restrict our attention to the $\rcd$-setting (\cite{AmbrosioGigliSavare11-2,Gigli12}). We start with the definition of this class.
\begin{definition}[$\rcd$-spaces]\label{def:RCD}
Let $K\in \R$ and $N \in [1,\infty]$. A metric measure space $\Xdm$ is a $\rcd(K,N)$-space, provided it is an infinitesimal Hilbertian $\cd_2(K,N)$-space.
\end{definition} 
For a reason that we now make clear, we avoided introducing the exponent's subscript in the definition of $\rcd$-space (i.e. a $\rcd_2$-condition), even though the two requirements characterizing the above definition are strictly related to the exponent $p=2$. Indeed, the first motivation is that, recently in \cite{Deng20}, it has been proven that $\rcd(K,N)$-spaces with $N<\infty$ are non-branching. Therefore, by appealing to the work \cite{ACMCS21}, it is clear that \emph{a posteriori} in Definition \ref{def:RCD} the choice $p=2$ is irrelevant (to be precise, in \cite{ACMCS21} the results are stated only for spaces of finite mass - still it is expected that they generalize to the setting of $\sigma$-finite spaces).  While, the second motivation is that in the work \cite{GigliHan14} it has been already shown that $\rcd(K,\infty)$-spaces posses $p$-independent weak upper gradients. Again, this implies \emph{a posteriori} that one can equivalently require the defining parallelogram rule with arbitrary minimal $p$-weak upper gradients. The reason for which the arguments in \cite{GigliHan14}, contrary to the present note, do not involve other exponents than $p=2$ is that, using heat-flow regularization techniques (which are well understood and at hand in this class, see e.g. \cite{AmbrosioGigliSavare11}), the problem of independence is reduced to the Lipschitz class which is \emph{large} enough in the $\rcd$-setting in every Sobolev space $W^{1,p}(\X)$ and capable to lead to the conclusion of $p$-independent weak upper gradient even in the infinite-dimensional setting of $\rcd(K,\infty)$-spaces. For the sake of completeness, we prove briefly which kind of independence is implied by \cite{GigliHan14} according to the axiomatization of Definition \ref{def:pindependent}.
\begin{proposition}\label{prop:RCDpindep}
Let $\Xdm$ be a $\rcd(K,N)$-space for some $K \in \R, N \in [1,\infty]$. Then, $\Xdm$ has $p$-independent weak upper gradient in the strong sense. Moreover, there are unique couples $(L^0(T^*\X),\sfd)$ and $(L^0(T\X),\nabla)$ of universal cotangent and tangent module, with associated linear universal differential and gradient, respectively.
\end{proposition}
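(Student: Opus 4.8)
The plan is to split the argument according to whether $N$ is finite, the point being that the transport structure underlying the (BIP) is available only when $N<\infty$. In that case I would first recall that, by \cite{Deng20}, every $\rcd(K,N)$ space is non branching, and that $\rcd(K,N)\subseteq\cd(K,N)\subseteq\mcp(K,N)$; hence $\Xdm$ is a non branching $\mcp(K,N)$ space, so that Theorem \ref{thm:BIPMCP} (together with the remark following it) applies verbatim and delivers at once the (BIP), the $p$-independence of weak upper gradients in the strong sense, and the existence and uniqueness of the universal cotangent module $L^0(T^*\X)$ with its differential $\d$. Since every $\rcd$ space is infinitesimal Hilbertian by definition, Theorem \ref{thm:gradqindepen} then produces the universal tangent module $L^0(T\X)=(L^0(T^*\X))^*$ together with the unique linear universal gradient $\nabla$. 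For this range of $N$ the proof is essentially a matter of quoting the results of Section \ref{Sec5}.

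When $N=\infty$ the (BIP) is no longer at our disposal — it is not known whether the $\cd(K,\infty)$ condition constrains $\OptGeo_q$ for $q\neq2$ — so I would instead rely on \cite{GigliHan14}, where it is shown, by heat-flow regularization, the Bakry-\'Emery estimate and the density of Lipschitz functions in every $W^{1,p}(\X)$, that on an $\rcd(K,\infty)$ space the minimal $p$-weak upper gradient does not depend on $p$. The remaining task is to read this off as the \emph{strong} independence of Definition \ref{def:pindependent}: property b) is the pointwise equality of the $|Df|_p$'s; property a) follows from the Lipschitz-density statement, since $\mathrm{Lip}_{bs}(\X)\subseteq W^{1,p_1}(\X)\cap W^{1,p_2}(\X)$ and is dense in each; and property c) follows from the integrability-transfer built into the heat-flow argument — the regularizations $\mathsf h_tf$ of a truncation-and-cut-off of $f$ are Lipschitz, hence have exponent-independent minimal weak upper gradient, which the Bakry-\'Emery estimate bounds in $L^{p_2}(\mm)$ uniformly in $t$ by a constant times $|Df|_{p_1}$, so that $\mathsf h_tf\in W^{1,p_2}(\X)$ and, letting $t\downarrow0$ and using the weak lower semicontinuity of minimal weak upper gradients, $f\in W^{1,p_2}(\X)$ with $|Df|_{p_2}\le|Df|_{p_1}$ (the reverse inequality being \eqref{eq:Dfp1p2}). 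Once the strong $p$-independence is in hand, Theorem \ref{thm:dpindepenw} furnishes the universal cotangent module and differential, and — infinitesimal Hilbertianity being again available — Theorem \ref{thm:gradqindepen} the universal tangent module and gradient.

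I expect the only genuine difficulty to lie in the case $N=\infty$, and within it in property c): in the (BIP) setting a single approximation-by-polygonal-plans argument (Proposition \ref{prop:sobolevPi} and Theorem \ref{thm:pind}) handles everything at once, whereas here one must run the heat-flow regularization with a uniform-in-$t$ control of the $L^{p_2}(\mm)$-size of the regularized gradients, which requires bookkeeping of truncations, cut-offs and the interaction between the Bakry-\'Emery inequality and the exponent-independence imported from \cite{GigliHan14}. If one is content with a shorter, less self-contained proof, one may simply observe that $\rcd(K,N)\subseteq\rcd(K,\infty)$ for every $N\in[1,\infty]$, so that the whole statement reduces to the $N=\infty$ discussion, i.e.\ to \cite{GigliHan14} combined with Theorems \ref{thm:dpindepenw} and \ref{thm:gradqindepen}.
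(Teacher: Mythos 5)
Your reduction to the case $N=\infty$ (and, alternatively, your finite-$N$ shortcut via \cite{Deng20}, the inclusion $\rcd(K,N)\subseteq\mcp(K,N)$ and Theorem \ref{thm:BIPMCP}) matches the structure of the paper's argument, and quoting \cite{GigliHan14} for properties b) and c) of Definition \ref{def:pindependent} is exactly what the paper does. The gap is in property a) for $N=\infty$: you dismiss it with ``the Lipschitz-density statement, since $\Lip_{bs}(\X)\subseteq W^{1,p_1}(\X)\cap W^{1,p_2}(\X)$ and is dense in each'', but the norm-density of $\Lip_{bs}(\X)$ in $W^{1,p}(\X)$ is precisely the nontrivial point here, not an available black box. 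What \cite{AmbrosioGigliSavare11-3} provides on an arbitrary metric measure space (hence on $\rcd(K,\infty)$, which need not be doubling, so \cite{ACM14} reflexivity is not at hand) is only density \emph{in energy}: one finds $(f_n)\subset\Lip_{bs}(\X)$ with $f_n\to f$ and $|Df_n|_p\to|Df|_p$ in $L^p(\mm)$, which does not by itself give $|D(f_n-f)|_p\to 0$; and \cite{GigliHan14} establishes the $p$-independence of the gradients, not the density of $W^{1,p_1}\cap W^{1,p_2}$ in each space.

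The paper closes this gap as follows: since the space is infinitesimally Hilbertian and, by \cite{GigliHan14}, the parallelogram identity $2|Df|_p^2+2|Dg|_p^2=|D(f+g)|_p^2+|D(f-g)|_p^2$ holds $\mm$-a.e.\ for $f,g\in\Lip_{bs}(\X)$ and every $p$, the Clarkson-type argument of Proposition \ref{prop:Lpunifconvex} shows that $B:=\overline{\Lip_{bs}(\X)}^{W^{1,p}}$ is uniformly convex. Combining this with the energy-density sequence $(f_n)$ above and the lower semicontinuity of the $W^{1,p}$-norm, one shows $(f_n)$ is $W^{1,p}$-Cauchy, whence $\Lip_{bs}(\X)$ is norm-dense in every $W^{1,p}(\X)$, which gives a). If you want your $N=\infty$ argument to stand, you must either supply this (or an equivalent) upgrade from energy density to norm density, or give an independent proof of a); as written, that step would fail. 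Your heat-flow sketch for c), while plausible, is also just a re-derivation of what \cite{GigliHan14} already provides and can simply be cited, as the paper does.
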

\begin{proof}
It suffices to prove the first claim for $N=\infty$. From \cite{GigliHan14} we know that $\rcd(K,\infty)$-spaces do satisfy $b)$ and $c)$ of Definition \ref{def:pindependent}. It remains to show a). Call now $B:= \overline{\Lip_{bs}(\X)}^{W^{1,p}}$ and observe that, since $\Lip_{bs}(\X) \subset W^{1,p}(\X)\cap W^{1,2}(\X)$ and
\[ 2|Df|_p^2 +2|Dg|_p^2 = |D(f+g)|_p^2+|D(f-g)|_p^2, \qquad \mm\text{-a.e.}, \forall f,g \in \Lip_{bs}(\X), \]
one can argue as in Proposition \ref{prop:Lpunifconvex} in order to show that $B$ is uniformly convex. Then, for every $f \in W^{1,p}(\X)$, we consider thanks to \cite{AmbrosioGigliSavare11-3} and a truncation argument, a sequence $(f_n) \subset \Lip_{bs}(\X)$ so that 
\begin{equation} f_n\rightarrow f, \quad |Df_n|_p \rightarrow  |Df|_p,  \quad \text{in } L^p(\mm). \label{eq:wlscdf}
\end{equation}
Then, for any $G$ weak limit of $\frac{|D(f_n+f_m)|_p}{2}$, by lower semicontinuity of the $W^{1,p}$-norm, we have
\[ \liminf_{n,m}\Big\| \frac{f_n+f_m}{2}\Big\|_{W^{1,p}(\X)} \ge \big(\|f\|^p_{L^p(\mm)}+\|G\|^p_{L^p(\mm)}\big)^{1/p} \ge \|f\|_{W^{1,p}(\X)}.\]
Suppose, by contradiction, that $(f_n)$ is not Cauchy. Then, $\exists \epsilon>0$ so that $\|f_n-f_m\| \ge \epsilon$ for countably many $n,m$. Observe that, uniform convexity of $B$ ensures that $\exists \delta>0$ such that
\[ \|f\|_{W^{1,p}(\X)}-\delta \ge \Big\| \frac{f_n+f_m}{2}\Big\|_{W^{1,p}(\X)},  \]
for countably many $n,m$, which clearly is absurd in light of \eqref{eq:wlscdf}. Therefore, $(f_n)$ is Cauchy and, its $B$-limit must be $f$. In other words, we showed that $\Lip_{bs}(\X)$ is a dense collection in  $W^{1,p}(\X)$ for every $p \in (1,\infty)$, which is a stronger statement implying a). Finally, for the last claim, simply invoke Theorem \ref{thm:gradqindepen} and Theorem \ref{thm:dpindepenw}.
\end{proof}

\subsection{$\cd(K,N)$ with $N<0$}
In this section, we consider a notion of the curvature dimension condition for metric measure spaces with generalized \emph{negative} dimension $N<0$. This was first introduced in \cite{Ohta16} and recently studied in \cite{MRS21,MR21} where the authors considered a larger class of metric measure spaces equipped with a \emph{quasi Radon} reference measures. In the present note we are not interested in working in full generality and recall the definition of this $\cd$-class sticking to our notion of $\Xdm$. Our goal is to show that very naturally, the results of \cite{Rajala12-2} and Appendix \ref{B} extend also to this class.

First, we need to modify the \emph{distortion coefficient} for the case $N\in (-\infty, 0)$ and $K \in \R, t \in [0,1]$:
\[
^-\sigma_{K,N}^{(t)}(\theta) := \begin{cases}
\ +\infty, &\text{if } K\theta^2 \le N\pi^2 ,\\
\ \frac{\sin(t\theta\sqrt{K/N})}{\sin(\theta\sqrt{K/N})} , &\text{if } N\pi^2 < K\theta^2 <0,\\
\ t, &\text{if } K\theta^2=0, \\
\ \frac{\sinh(t\theta\sqrt{-K/N})}{\sinh(\theta\sqrt{-K/N})}, &\text{if } 0<K\theta^2.\\
\end{cases}
\]
Set also $^-\tau^{(t)}_{K,N}(\theta) := t^{\frac{1}{N}}\sigma^{(t)}_{K,N-1}(\theta)^{1-\frac{1}{N}}$. Finally, for $\mu \in \cP(\X)$, we define the $N$-\emph{R\'enyi relative entropy} with respect to $\mm$ with Negative $N$ by 
\[ ^-\mathcal{U}_N(\mu | \mm) := \int_\X \rho^{1-\frac{1}{N}}\, \d \mm, \qquad \text{if } \mu = \rho \mm,\quad \infty\text{ otherwise}.\]

\begin{definition}[{$\cd_q(K,N)$ with }$N<0$]
Let $q \in (1,\infty)$, $K \in \R$ and $N<0$. We say that a metric measure space $\Xdm$ satisfies the \emph{curvature dimension condition} $\cd_q(K,N)$ if any pair of probabilities $\mu_0=\rho_0\mm, \mu_1=\rho_1\mm \in \cP_q(\X)$ admits a plan $\pi \in \OptGeo_q(\rho_0\mm,\rho_1\mm)$ so that
\begin{equation}
^-\mathcal{U}_{N'}(\mu_t|\mm) \le \int \ ^-\tau^{(1-t)}_{K,N'}(\sfd(\gamma_1,\gamma_0))\rho_0(\gamma_0)^{-\frac{1}{N}} + ^-\tau^{(t)}_{K,N'}(\sfd(\gamma_1,\gamma_0))\rho_1(\gamma_1))^{-\frac{1}{N}} \, \d \pi(\gamma),\label{eq:CDKNegativ}
\end{equation} 
for every $t \in [0,1], N' \in [N,0)$ and having denoted $\mu_t := (\e_t)_{\sharp}\pi$.
\end{definition}
It is rather obvious, see e.g. \cite[Proposition 2.6]{MRS21}, that, if $\Xdm$ satisfies the $\cd_q(K,N)$-condition $K \in \R,N<0$, then it satisfies the  $\cd_q(K',N')$-condition for every $K'\le K, N'\in [N,0)$.
\begin{remark}\rm
In order to avoid confusion with the previous definition, we considered writing $^-\sigma,^-\tau,^-\mathcal{U}$ intentionally to distinguish the notation when $N<0$. 
Moreover, we point out that it is natural to consider defining the entropy as $\int \rho^{1-\frac{1}{N}}\, \d \mm$ without a minus sign in front of the integral, as $h(s):= s^{1-\frac{1}{N}}$ is a convex function for $N<0$. 
\fr
\end{remark}
Following \cite{Rajala12-2} and Appendix \ref{B}, we show that also in this case, the curvature dimension condition spreads the support of the measure.
\begin{lemma}\label{lem:spreadNegative}
Let $\Xdm$ be a metric measure space that is a $\cd_q(K,N)$-space for some $K \in R$, $N<0$ and $q \in (1,\infty)$. Then, for any $\rho_0,\rho_1 \in L^\infty(\mm)$ probability densities with $D:=$ diam$(supp(\rho_0)\cup supp(\rho_1)))<\infty$ , we have:

\begin{itemize}
\item[{\rm i)}] If $K\ge 0$, there exists $\pi \in \OptGeo_q(\rho_0\mm,\rho_1\mm)$ so that
\[ \mm(\{\rho_\frac12 >0\}) \ge \frac{1}{\|\rho_0\|_{L^\infty}\vee \|\rho_1\|_{L^\infty}}e^{-\frac{1}{2}\sqrt{(1-N)K}D};\]
\item[{\rm ii)}] If $K<0$ and $D < \pi \sqrt{\frac {N-1}K}$, there exists $\pi \in \OptGeo_q(\rho_0\mm,\rho_1\mm)$ so that
\[ \mm(\{\rho_\frac12 >0\}) \ge \frac{1}{\|\rho_0\|_{L^\infty}\vee \|\rho_1\|_{L^\infty}} \cos^{1-N}(\tfrac 12 D \sqrt{K/N-1});\]
\end{itemize}
where $(\e_{\frac{1}{2}})_{\sharp}\pi = \rho_{\frac{1}{2}}\mm + \mu_{\frac12}^s$ with $\mu_{\frac12}^s\perp \mm$.
\end{lemma}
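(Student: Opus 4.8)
\medskip

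The plan is to mimic the strategy of \cite{Rajala12-2} (and of the analogous computations in Appendix~\ref{B}) which estimates from below the measure of the set carrying the absolutely continuous part of the midpoint interpolant $(\e_{1/2})_\sharp\pi$. We take $\pi\in\OptGeo_q(\rho_0\mm,\rho_1\mm)$ given by the $\cd_q(K,N)$-condition, write $(\e_{1/2})_\sharp\pi=\rho_{1/2}\mm+\mu^s_{1/2}$ with $\mu^s_{1/2}\perp\mm$, and set $A:=\{\rho_{1/2}>0\}$. The key inequality comes from evaluating \eqref{eq:CDKNegativ} at $t=\tfrac12$ and $N'=N$, which gives
\[
{}^-\mathcal U_N\big((\e_{1/2})_\sharp\pi\,\big|\,\mm\big)\;\le\;\int\Big({}^-\tau^{(1/2)}_{K,N}(\sfd(\gamma_0,\gamma_1))\rho_0(\gamma_0)^{-1/N}+{}^-\tau^{(1/2)}_{K,N}(\sfd(\gamma_0,\gamma_1))\rho_1(\gamma_1)^{-1/N}\Big)\,\d\pi(\gamma).
\]
Since $h(s)=s^{1-1/N}$ is convex for $N<0$ and, crucially, nonnegative, the singular part only helps: by Jensen's inequality applied to the probability $\tfrac1{\mm(A)}\mm\restr A$ we have the lower bound
\[
{}^-\mathcal U_N\big((\e_{1/2})_\sharp\pi\,\big|\,\mm\big)=\int_A\rho_{1/2}^{1-1/N}\,\d\mm+(\text{singular contribution})\;\ge\;\mm(A)\Big(\tfrac1{\mm(A)}\Big)^{1-1/N}=\mm(A)^{1/N},
\]
where we used $\int_A\rho_{1/2}\,\d\mm\le 1$ and $1-1/N>1$ (so the function is superlinear and Jensen goes the right way, the singular part contributing a nonnegative amount that we discard). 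Combining the two displays yields $\mm(A)^{1/N}\le(\text{RHS})$, i.e.\ $\mm(A)\ge(\text{RHS})^{N}$ since $N<0$ reverses the inequality.

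\medskip

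It remains to bound the right-hand side from above. Here we use that $\pi$ is supported on geodesics joining $\supp(\rho_0)$ to $\supp(\rho_1)$, so $\sfd(\gamma_0,\gamma_1)\le D$ for $\pi$-a.e.\ $\gamma$, and that $\theta\mapsto{}^-\tau^{(1/2)}_{K,N}(\theta)$ is monotone in $\theta$ on the relevant range (increasing when $K<0$, decreasing when $K\ge0$ — in the case $K<0$ the hypothesis $D<\pi\sqrt{(N-1)/K}$ guarantees we stay below the first pole). Thus each $\tau$-coefficient is bounded by ${}^-\tau^{(1/2)}_{K,N}(D)$, and using $\int\rho_0(\gamma_0)^{-1/N}\,\d\pi=\int\rho_0^{1-1/N}\,\d\mm\le\|\rho_0\|_{L^\infty}^{-1/N}$ (since $\rho_0^{1-1/N}=\rho_0\cdot\rho_0^{-1/N}\le\rho_0\|\rho_0\|_{L^\infty}^{-1/N}$) and likewise for $\rho_1$, we get
\[
(\text{RHS})\;\le\;{}^-\tau^{(1/2)}_{K,N}(D)\big(\|\rho_0\|_{L^\infty}^{-1/N}+\|\rho_1\|_{L^\infty}^{-1/N}\big)\;\le\;2\,{}^-\tau^{(1/2)}_{K,N}(D)\big(\|\rho_0\|_{L^\infty}\vee\|\rho_1\|_{L^\infty}\big)^{-1/N}.
\]
Raising to the power $N<0$ and plugging the explicit form ${}^-\tau^{(1/2)}_{K,N}(D)=2^{-1/N}\,{}^-\sigma^{(1/2)}_{K,N-1}(D)^{1-1/N}$, one checks the factor $2^N\cdot 2^{-N}=1$ cancels and the surviving term is $\big({}^-\sigma^{(1/2)}_{K,N-1}(D)\big)^{N-1}$. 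Evaluating the sine/sinh: when $K\ge0$ we have ${}^-\sigma^{(1/2)}_{K,N-1}(D)=\sinh(\tfrac12 D\sqrt{K/(1-N)})/\sinh(D\sqrt{K/(1-N)})$, and the elementary bound $\sinh(x)/\sinh(2x)=1/(2\cosh x)\ge \tfrac12 e^{-x}$ gives precisely the claimed $e^{-\frac12\sqrt{(1-N)K}D}$ after accounting for $(N-1)$; when $K<0$ we get ${}^-\sigma^{(1/2)}_{K,N-1}(D)=\sin(\tfrac12 D\sqrt{-K/(N-1)})/\sin(D\sqrt{-K/(N-1)})=1/(2\cos(\tfrac12 D\sqrt{K/(N-1)}))$, yielding $\cos^{1-N}(\tfrac12 D\sqrt{K/(N-1)})$, which is exactly item~ii).

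\medskip

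The main obstacle I anticipate is purely bookkeeping of signs and exponents: with $N<0$ every inequality that involves raising to the power $N$ or $1-1/N$ flips, the coefficient ${}^-\tau$ has a shifted second parameter $N-1$ (which is $<-1$), and one must be careful that the convexity/monotonicity facts about ${}^-\sigma,{}^-\tau$ in this regime are the ones actually needed (they are recorded in \cite{Ohta16,MRS21}). A secondary point is justifying that the singular part $\mu^s_{1/2}$ can simply be dropped in the lower bound for ${}^-\mathcal U_N$; this is immediate from nonnegativity of the integrand $\rho^{1-1/N}$ together with the convention that ${}^-\mathcal U_N=+\infty$ (or is defined only via the a.c.\ part) — either way the estimate $\mm(A)^{1/N}\le{}^-\mathcal U_N\le(\text{RHS})$ is what we need, and no finiteness of ${}^-\mathcal U_N$ of the endpoints is required beyond $\rho_0,\rho_1\in L^\infty$. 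Modulo these sign-chases, the argument is a direct transcription of the positive-$N$ case treated in Appendix~\ref{B}.
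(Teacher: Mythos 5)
Your overall strategy coincides with the paper's (apply \eqref{eq:CDKNegativ} at $t=\tfrac12$, $N'=N$, lower-bound the entropy via Jensen, and upper-bound the distortion coefficients using $\sfd(\gamma_0,\gamma_1)<D$), and your case $K<0$ is essentially correct up to bookkeeping of the factors of $2$ (note that $(2\,{}^-\tau^{(1/2)}_{K,N}(D))^N=(2\,{}^-\sigma^{(1/2)}_{K,N-1}(D))^{N-1}$, not ``$2^N\cdot 2^{-N}=1$''; the final $\cos^{1-N}$ does come out right). The case $K\ge 0$, however, does not work as written, because your estimates go the wrong way twice. You correctly note that for $K>0$ the map $\theta\mapsto{}^-\tau^{(1/2)}_{K,N}(\theta)$ is \emph{decreasing}, but then you bound each coefficient by its value at $\theta=D$: for a decreasing function this is a lower bound on the integrand, so the inequality $\mathrm{RHS}\le 2\,{}^-\tau^{(1/2)}_{K,N}(D)\,(\|\rho_0\|_{L^\infty}\vee\|\rho_1\|_{L^\infty})^{-1/N}$ is unjustified (and false in general, e.g.\ when most geodesics are much shorter than $D$). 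Compounding this, at the end you invoke $1/(2\cosh x)\ge\tfrac12 e^{-x}$, i.e.\ a \emph{lower} bound on ${}^-\sigma^{(1/2)}_{K,N-1}(D)$, whereas after raising to the negative power $N-1$ you need an \emph{upper} bound on the $\sigma$-coefficient in order to get a lower bound on $\mm(\{\rho_{1/2}>0\})$; followed literally, your chain produces $e^{+\frac12\sqrt{(1-N)K}D}$ on the wrong side of the inequality. The paper's proof instead uses the uniform upper bound ${}^-\sigma^{(1/2)}_{K,N-1}(\theta)=1/(2\cosh(\cdot))\le\tfrac12 e^{\frac12 D\sqrt{K/(1-N)}}$, valid for all $\theta\le D$ (even the trivial bound $\le\tfrac12$ would do, and in fact gives a stronger constant), which after exponentiation yields exactly the factor $e^{-\frac12\sqrt{(1-N)K}D}$ of item i).

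A second, smaller gap is the Jensen step. The inequality $\int_A\rho_{1/2}^{1-1/N}\,\d\mm\ge\mm(A)^{1/N}$ requires $\int_A\rho_{1/2}\,\d\mm\ge 1$, not $\le 1$ as you write: if $c:=\int_A\rho_{1/2}\,\d\mm<1$, Jensen only gives $c^{1-1/N}\mm(A)^{1/N}$, which is strictly smaller, so under the convention that ${}^-\mathcal{U}_N$ sees only the absolutely continuous part the singular mass does \emph{not} ``only help''. The correct justification is that with the paper's definition ${}^-\mathcal{U}_N(\mu|\mm)=+\infty$ whenever $\mu\not\ll\mm$; since the right-hand side of \eqref{eq:CDKNegativ} is finite (bounded densities, $\sfd(\gamma_0,\gamma_1)<D$, and for $K<0$ the hypothesis $D<\pi\sqrt{(N-1)/K}$ keeps the coefficients finite), the midpoint interpolant is forced to be absolutely continuous, hence $\int_A\rho_{1/2}\,\d\mm=1$ and the bound $\mm(A)^{1/N}$ is legitimate.
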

\begin{proof}
Fix $\pi \in {\rm OpGeo}(\rho_0\mm,\rho_1\mm)$ satisfying \eqref{eq:CDKNegativ}, denote $E:=\{ \rho_{\frac{1}{2}}>0\}$ and notice that $\sfd(\gamma_0,\gamma_1)<D$ $\pi$-a.e. $\gamma$.

Suppose first that $K\ge 0$ and estimate
\[
^-\sigma^{(\frac12)}_{K,N}(\sfd(\gamma_0,\gamma_1)) = \frac{1}{e^{-\frac{1}{2}\sqrt{-K/N}\sfd(\gamma_0,\gamma_1)}+e^{\frac{1}{2}\sqrt{-K/N}\sfd(\gamma_0,\gamma_1)}} \le \frac12e^{\frac{1}{2}\sqrt{-K/N}D}.
\]
so that $^-\tau^{(\frac12)}_{K,N}(\sfd(\gamma_0,\gamma_1)) \le \tfrac12 \big(e^{\frac12\sqrt{(1-N)K}D} \big)^{-\frac 1N}$ $\pi$-a.e. $\gamma$ and lastly
\begin{equation}
\mathcal{U}_N(\mu_\frac12|\mm)  \le  (\|\rho_0\|_{L^\infty}\vee \|\rho_1\|_{L^\infty})^{-\frac{1}{N}}\big(e^{\frac12\sqrt{K(1-N)}D} \big)^{-\frac 1N}.
\label{eq:Unle}
\end{equation}
Moreover, being $1-1/N \ge 1$, an application of Jensen inequality yields the following estimate from below
\begin{equation}
\mathcal{U}_N(\mu_\frac12|\mm) = \mm(E)\fint_E \rho_\frac12^{1-\frac{1}{N}}\,\d \mm \ge \mm(E)\big(\frac{1}{\mm(E)}\big)^{1-\frac{1}{N}} = \mm(E)^{\frac{1}{N}}.
\label{eq:Unge}
\end{equation}
We can now combine \eqref{eq:Unge} with \eqref{eq:Unle}, raise to the $-N$ power and rearrange to conclude in the case $K\ge0$. 

Suppose now instead $K<0$ and $D<\pi\sqrt{N-1/K}$. Then, $\pi$-a.e. $\gamma$ we have that $\frac 12 \sfd(\gamma_0,\gamma_1) \sqrt{K/N-1}) < \pi/2$ and, since the cosine is monotone decreasing and strictly positive on $[0,\pi/2)$, we estimate
\[^-\tau^{(\frac12)}_{K,N}(\sfd(\gamma_0,\gamma_1)) = \frac 12  \Big( \frac{1}{\cos(\frac 12 \sfd(\gamma_0,\gamma_1) \sqrt{K/N-1})} \Big)^{1-\frac 1N} \le \frac 12  \Big( \frac{1}{\cos(\frac 12 D \sqrt{K/N-1})} \Big)^{1-\frac 1N}.\]
Then, combining with \eqref{eq:Unge}, we achieve
\[ \mm(E)^{\frac 1N} \le  (\|\rho_0\|_{L^\infty}\vee \|\rho_1\|_{L^\infty})^{-\frac{1}{N}} \big( \cos(\tfrac 12 D \sqrt{K/N-1}) \big)^{\frac {1-N}N} ,\]
that easily implies the conclusion.
\end{proof} 
From this, it follows:
\begin{theorem}\label{thm:rajalaCDqKNegative}
Let $\Xdm$ be a metric measure space that is a $\cd_q(K,N)$-space for some $K \in \R, N\in (-\infty,0),$ and $q \in (1,\infty)$. Then, for any $D>0$ and $\rho_0 ,\rho_1 \in L^{\infty}(\mm)$ probability densities with diam$($supp$(\rho_0) \ \cup\ $supp$(\rho_1))<D$, it holds
\begin{itemize}
\item[{\rm i)}] if $K\ge 0$, there exists $\pi \in \OptGeo_q(\rho_0\mm,\rho_1\mm)$ with $\mu_t:=(\e_t)_\sharp \pi \ll \mm$ and
\[ \|\rho_t\|_{L^\infty(\mm)} \le  \|\rho_0\|_{L^\infty(\mm)}\vee \|\rho_1\|_{L^\infty(\mm)};\]
\item[{\rm ii)}] If $K<0$ and $D\le \emph{ diam}(\X)<\pi \sqrt{\frac {N-1}K}$, there exists $\pi \in \OptGeo_q(\rho_0\mm,\rho_1\mm)$ with $\mu_t:=(\e_t)_\sharp \ll \mm$ and
\[ \|\rho_t\|_{L^\infty(\mm)} \le   \frac{ \big(\tfrac{D}{4}\sqrt{\tfrac K{N-1} }\big)^{1-N}}{ \sin^{1-N}  \big( \tfrac{D}{4}\sqrt{ \tfrac{K}{N-1}}\big) }  \|\rho_0\|_{L^\infty(\mm)}\vee \|\rho_1\|_{L^\infty(\mm)};\]
\end{itemize}
for all $t \in [0,1]$ and having set $\rho_t:= \frac{\d \mu_t}{\d \mm}$.
\end{theorem}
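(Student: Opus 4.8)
The plan is to deduce Theorem~\ref{thm:rajalaCDqKNegative} from the density-spreading Lemma~\ref{lem:spreadNegative} by exactly the iteration/bisection scheme used in the proof of Theorem~\ref{thm:rajalaCDqKN} (which is carried out in Appendix~\ref{B}, following \cite{Rajala12-2}). The point of the lemma is that it gives, for \emph{any} pair of $L^\infty$ probability densities with support-diameter $<D$, a geodesic plan whose midpoint measure $(\e_{1/2})_\sharp\pi=\rho_{1/2}\mm+\mu^s_{1/2}$ has an absolutely continuous part supported on a set of measure at least $\frac{1}{\|\rho_0\|_\infty\vee\|\rho_1\|_\infty}\,\Theta(D)$, where $\Theta(D)=e^{-\frac12\sqrt{(1-N)K}D}$ if $K\ge0$ and $\Theta(D)=\cos^{1-N}(\tfrac12 D\sqrt{K/(N-1)})$ if $K<0$. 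The first step is the standard upgrade of this ``the midpoint is not too concentrated'' statement into a genuine $L^\infty$ bound on the midpoint density: pushing through the same localization argument as in Appendix~\ref{B} (restrict $\mu_0$ to superlevel sets of $\rho_{1/2}$, use that the singular part forces the bound to degrade, etc.), one gets a plan $\pi$ with $(\e_{1/2})_\sharp\pi=\rho_{1/2}\mm$ (no singular part) and $\|\rho_{1/2}\|_\infty\le \Theta(D)^{-1}(\|\rho_0\|_\infty\vee\|\rho_1\|_\infty)$; note $\Theta(D)\le1$ so the constant is $\ge1$, as it must be.

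Next I would iterate along the dyadic grid. Given $\mu_0,\mu_1$ with support-diameter $<D$, any geodesic interpolant $\mu_t$ between them is supported in a set of diameter $\le D$ for every $t$ (convexity of the geodesic), and restricting to subintervals only decreases the relevant diameter; so at every bisection step the constant $\Theta(D)^{-1}$ is the one we may use, uniformly. Applying the midpoint estimate recursively to the pairs $(\mu_{j/2^k},\mu_{(j+1)/2^k})$ and gluing (as in Lemma~\ref{lem:approxPol}, via \cite[Lemma~2.1.1]{G11}) produces, for each $k$, a plan whose restriction to the dyadic times is absolutely continuous with $\|\rho_{j/2^k}\|_\infty\le \Theta(D)^{-1}\,(\|\rho_0\|_\infty\vee\|\rho_1\|_\infty)$ --- \emph{crucially the bound does not compound}, because at each bisection the new midpoint density is controlled by the max of the two endpoint densities already in hand, not their sum. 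Passing to the limit $k\to\infty$ by the weak-compactness/lower-semicontinuity machinery already set up (tightness of the plans, lower semicontinuity of $\rmKe_q$, and $L^\infty$-weak-$*$ lower semicontinuity of densities under the compression bound), one extracts $\pi\in\OptGeo_q(\rho_0\mm,\rho_1\mm)$ with $(\e_t)_\sharp\pi\ll\mm$ and $\|\rho_t\|_\infty\le\Theta(D)^{-1}(\|\rho_0\|_\infty\vee\|\rho_1\|_\infty)$ for every dyadic $t$, hence for every $t$ by continuity of $t\mapsto(\e_t)_\sharp\pi$ in $W_q$ together with lower semicontinuity of the $L^\infty$ norm.

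Finally I would simplify the constants into the stated closed forms. For $K\ge 0$ this is immediate: $\Theta(D)^{-1}=e^{\frac12\sqrt{(1-N)K}D}$, but actually a sharper look (running the bisection so that at scale $k$ the support-diameter of each piece is $\le D/2^k$, and summing the resulting geometric series $\sum_k \tfrac12\sqrt{(1-N)K}D/2^k$) yields exactly constant $1$ in case~(i), matching the theorem --- this telescoping over dyadic scales is the reason the exponential disappears for $K\ge0$. For $K<0$ one does the same dyadic summation with the cosine factors: at scale $k$ each piece contributes $\cos^{-(1-N)}(\tfrac12\cdot\tfrac{D}{2^k}\sqrt{K/(N-1)})$, and the infinite product $\prod_{k\ge1}\cos^{-(1-N)}\!\big(\tfrac{D}{2^{k+1}}\sqrt{\tfrac{K}{N-1}}\big)$ is evaluated via the classical identity $\prod_{k\ge1}\cos(x/2^k)=\sin x/x$, giving precisely $\big(\tfrac{D}{4}\sqrt{\tfrac{K}{N-1}}\big)^{1-N}\big/\sin^{1-N}\!\big(\tfrac{D}{4}\sqrt{\tfrac{K}{N-1}}\big)$, as claimed; the hypothesis $D<\pi\sqrt{(N-1)/K}$ guarantees all the cosines are positive so the product converges. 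The main obstacle is the limit step: one must verify that the dyadic-scale plans glue consistently and that the weak limit is still an \emph{optimal} geodesic plan with the density bound surviving --- this is where \eqref{eq:costgeo}, the tightness argument of Lemma~\ref{lem:pipmGH}/Lemma~\ref{lem:approxPol}, and lower semicontinuity of $L^\infty$ norms under a uniform compression bound all have to be combined carefully; everything else is the bookkeeping of the telescoping constants, which is routine.
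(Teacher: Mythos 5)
Your overall scheme is exactly the paper's: replace Step 1 of the Appendix~\ref{B} argument by Lemma~\ref{lem:spreadNegative}, upgrade the spreading estimate to an $L^\infty$ bound at the midpoint via the excess-mass functional $\cF_C$ (Steps 2--3), iterate dyadically with the halving diameters, pass to the limit, and in case (ii) evaluate the resulting infinite product of cosines via the identity of Lemma~\ref{lem:identity} (your constant bookkeeping in (ii) matches the paper's up to the indexing of the dyadic levels). One remark in passing: your sentence that ``the bound does not compound'' is inconsistent with what you do afterwards --- the per-level constants do multiply across scales, and it is precisely the geometric shrinking of the diameters that makes the infinite product finite; your later computation treats this correctly, so this is only a presentational slip.

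However, there is a genuine gap in case (i). You claim that summing the geometric series $\sum_k \tfrac12\sqrt{(1-N)K}\,D\,2^{-k}$ makes ``the exponential disappear'' and yields constant $1$. It does not: that series converges to the strictly positive number $\tfrac12\sqrt{(1-N)K}\,D$ (for $K>0$), so the product of the per-scale factors is $e^{c(N,K)D}>1$, and your argument only proves $\|\rho_t\|_{L^\infty(\mm)}\le e^{c(N,K)D}\,\|\rho_0\|_{L^\infty(\mm)}\vee\|\rho_1\|_{L^\infty(\mm)}$, not the stated bound with constant $1$. Telescoping a product of factors each $>1$ can never give $1$; the exponential survives in the limit. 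The paper obtains constant $1$ by a different (and necessary) observation: the $\cd_q(K,N)$ condition is monotone in $K$ (a $\cd_q(K,N)$ space with $K\ge0$ is also $\cd_q(0,N)$), and for $K=0$ the coefficients satisfy $^-\tau^{(1/2)}_{0,N}\equiv\tfrac12$, so the spreading/midpoint estimate of Lemma~\ref{lem:spreadNegative} holds with constant exactly $1$ at every scale, and the dyadic iteration then yields the clean bound $\|\rho_t\|_{L^\infty(\mm)}\le\|\rho_0\|_{L^\infty(\mm)}\vee\|\rho_1\|_{L^\infty(\mm)}$. You should replace your case-(i) argument by this reduction to the $\cd_q(0,N)$ class; with that fix, the rest of your proposal goes through along the same lines as the paper.
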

\begin{proof}
The proof follows directly from the proof of Theorem \ref{thm:rajalaCDqKN} in Appendix \ref{B}, by replacing \textsc{Step 1} there with Lemma \ref{lem:spreadNegative} and repeating \emph{verbatim} \textsc{Step 2} - \textsc{Step 3} - \textsc{Step 4}. For the case {\rm i)}, we simplified the estimate directly working in the larger $\cd_q(0,N)$-class. For the case {\rm ii)}, when $K<0$, we shall also make use (to get the $L^\infty$-bound by completion and induction) of the identity:
\[ \lim_{n\to \infty} \prod_{i=1}^n \cos( \theta 2^{-i}) = \frac{ \sin( \theta  )}{\theta }, \qquad \text{ for }\theta = \tfrac{D}{4}\sqrt{ \tfrac K{N-1}},\]
proven in Lemma \ref{lem:identity} below.
\end{proof}
We finish this section by proving two straightforward corollaries.
\begin{corollary}
Let $\Xdm$ be a metric measure space that is a $\cd_q(K,N)$-space for every $q \in (1,\infty)$ and for some $K \in \R, N\in(-\infty,0)$. If $K\ge 0$ or $K<0$ and diam$(\X) <\pi \sqrt{\frac {N-1}K}$, then $\Xdm$ is a BIP-space.
\end{corollary}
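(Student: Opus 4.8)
The plan is simply to invoke Theorem \ref{thm:rajalaCDqKNegative}, which already carries out all the transport estimates; only routine bookkeeping remains. First I would fix $q\in(1,\infty)$: since $\Xdm$ is assumed to be a $\cd_q(K,N)$-space, Theorem \ref{thm:rajalaCDqKNegative} is available, and I will apply it in its two cases.

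\emph{Case $K\ge 0$.} Given any $D>0$ and probability densities $\rho_0,\rho_1\in L^\infty(\mm)$ with $\mathrm{diam}(\mathrm{supp}(\rho_0)\cup\mathrm{supp}(\rho_1))<D$, part {\rm i)} of Theorem \ref{thm:rajalaCDqKNegative} furnishes $\pi\in\OptGeo_q(\rho_0\mm,\rho_1\mm)$ with $(\e_t)_\sharp\pi=\rho_t\mm$ and $\|\rho_t\|_{L^\infty(\mm)}\le\|\rho_0\|_{L^\infty(\mm)}\vee\|\rho_1\|_{L^\infty(\mm)}$ for every $t\in[0,1]$. This is exactly \eqref{BIP} with the constant profile function $D\mapsto 1\in[1,\infty)$.

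\emph{Case $K<0$.} Here $\mathrm{diam}(\X)<\pi\sqrt{(N-1)/K}<\infty$, so every admissible pair has $\mathrm{diam}(\mathrm{supp}(\rho_0)\cup\mathrm{supp}(\rho_1))\le\mathrm{diam}(\X)$; for such a pair I would apply part {\rm ii)} of Theorem \ref{thm:rajalaCDqKNegative} with a parameter $D$ satisfying $\mathrm{diam}(\mathrm{supp}(\rho_0)\cup\mathrm{supp}(\rho_1))<D\le\mathrm{diam}(\X)$, obtaining $\pi\in\OptGeo_q(\rho_0\mm,\rho_1\mm)$ with $(\e_t)_\sharp\pi=\rho_t\mm$ and
\[
\|\rho_t\|_{L^\infty(\mm)}\le\frac{\big(\tfrac{D}{4}\sqrt{\tfrac{K}{N-1}}\big)^{1-N}}{\sin^{1-N}\big(\tfrac{D}{4}\sqrt{\tfrac{K}{N-1}}\big)}\,\big(\|\rho_0\|_{L^\infty(\mm)}\vee\|\rho_1\|_{L^\infty(\mm)}\big),\qquad\forall t\in[0,1].
\]
Writing $\theta:=\tfrac{D}{4}\sqrt{K/(N-1)}$, the diameter bound yields $\theta\in(0,\pi/4)$, so the prefactor $C_q(D):=(\theta/\sin\theta)^{1-N}$ is finite (as $\sin\theta>0$ on $(0,\pi)$) and satisfies $C_q(D)\ge 1$ (as $1-N>0$ and $\sin\theta\le\theta$). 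Hence \eqref{BIP} holds with profile function $D\mapsto C_q(D)$; it can be extended to all of $\R^+$ and, if desired, replaced by a nondecreasing continuous majorant, which the remark following Definition \ref{def:bip} permits.

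Finally, both profile functions are independent of $q$ and the argument runs for an arbitrary $q\in(1,\infty)$, so $\Xdm$ is a BIP-space. I do not expect any genuine obstacle here: the only points needing comment are that the diameter hypothesis keeps the $K<0$ prefactor finite and that this prefactor is $\ge 1$ — both immediate from $\sin\theta\le\theta$ — and the harmless edge case in which a pair of supports realizes $\mathrm{diam}(\X)$ exactly, which is dealt with by an obvious approximation (or sidestepped by slightly shrinking the supports before applying the theorem).
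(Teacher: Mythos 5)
Your proposal is correct and follows exactly the paper's route: the paper's proof of this corollary is the one-line observation that the hypotheses together with Theorem \ref{thm:rajalaCDqKNegative} give the conclusion, and you simply spell out the same verification of Definition \ref{def:bip} (constant profile for $K\ge 0$, the finite $(\theta/\sin\theta)^{1-N}$ prefactor for $K<0$), plus the harmless edge case of supports realizing the diameter, which the paper does not even comment on.
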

\begin{proof}
Observe that the hypotheses with Theorem \ref{thm:rajalaCDqKNegative} give the conclusion.
\end{proof}
\begin{corollary}
Let $\Xdm$ be a metric measure space that is a $\cd_2(K,N)$-space for some $N\in(-\infty,0)$ and $K \in \R$. Then, if $K\ge 0$ or $K<0$ and diam$(\X) <\pi \sqrt{\frac {N-1}K}$, $\Xdm$ supports a weak local $(1,1)$-Poincar\'e inequality.
\end{corollary}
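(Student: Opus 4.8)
The plan is to specialize the interpolation density bounds of Theorem~\ref{thm:rajalaCDqKNegative} to the exponent $q=2$ and then feed them into Rajala's implication ``bounded-density interpolation $\Rightarrow$ local Poincar\'e inequality''. The point worth stressing at the outset is that, unlike the preceding corollary (which assumed $\cd_q(K,N)$ for \emph{all} $q$ and concluded the full (BIP)), here the $\cd_2$ hypothesis is enough, because Rajala's Poincar\'e argument only ever uses the $q=2$ mass-spreading property.

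First I would apply Theorem~\ref{thm:rajalaCDqKNegative} with $q=2$: since $\Xdm$ is $\cd_2(K,N)$ with $N<0$, for every admissible $D>0$ and every pair $\rho_0,\rho_1\in L^\infty(\mm)$ of probability densities with $\diam(\supp(\rho_0)\cup\supp(\rho_1))<D$ there is $\pi\in\OptGeo_2(\rho_0\mm,\rho_1\mm)$ with $(\e_t)_\sharp\pi=\rho_t\mm\ll\mm$ and
\[
\|\rho_t\|_{L^\infty(\mm)}\le C(D)\,\big(\|\rho_0\|_{L^\infty(\mm)}\vee\|\rho_1\|_{L^\infty(\mm)}\big),\qquad\forall t\in[0,1],
\]
where $C(D)\equiv1$ if $K\ge0$, so that every $D>0$ is admissible, and $C(D)=\big(\tfrac D4\sqrt{\tfrac K{N-1}}\big)^{1-N}\big/\sin^{1-N}\!\big(\tfrac D4\sqrt{\tfrac K{N-1}}\big)$ if $K<0$, in which case the standing hypothesis $\diam(\X)<\pi\sqrt{\tfrac{N-1}{K}}$ guarantees that every $D\le\diam(\X)$ is admissible with $C(D)<\infty$. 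This is exactly the $q=2$ ``interpolation with uniformly bounded density'' property.

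Second, I would invoke \cite[Theorem~4.1]{Rajala12-2}. Its proof derives the weak local $(1,1)$-Poincar\'e inequality from precisely such a density bound: for a ball $B=B_r(x)$ and a Lipschitz $f$ one picks an interpolating $2$-plan $\pi$ between suitable renormalized measures supported near $B$, estimates $\fint_B|f-f_B|\,\d\mm$ against $\iint_0^1\lip f(\gamma_t)\msp\,\d t\,\d\pi$ by the very definition of weak upper gradient along the interpolating geodesics, and finally uses the uniform bound on $(\e_t)_\sharp\pi$ to convert that integral back into a constant multiple of $r\fint_{B_{\Lambda r}}\lip f\,\d\mm$. As this uses only the $q=2$ estimate just produced --- in particular no doubling assumption and nothing about exponents $q\ne2$ --- it applies verbatim here, yielding the inequality for the radius parameter ranging over all of $(0,\infty)$ when $K\ge0$, and over $(0,\diam(\X)]$ when $K<0$ (which suffices, since the space is then bounded). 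Thus there is no real obstacle: the analytic content has already been carried out in Theorem~\ref{thm:rajalaCDqKNegative} and in \cite{Rajala12-2}, and the only thing to check is the consistent use of the exponent $q=2$ throughout.
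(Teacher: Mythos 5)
Your argument is correct and is essentially the paper's own proof: the paper also deduces the corollary directly from \cite[Theorem 4.1]{Rajala12-2} combined with Theorem \ref{thm:rajalaCDqKNegative} applied with $q=2$. Your additional remarks on why only the exponent $q=2$ is needed and on the admissible range of $D$ are accurate elaborations of that same route.
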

\begin{proof}
This is a direct consequence of \cite[Theorem 4.1]{Rajala12-2} recalling Theorem \ref{thm:rajalaCDqKNegative}.
\end{proof}

\begin{lemma}\label{lem:identity}
It holds
\[ \lim_{n\to \infty} \prod_{i=1}^n \cos(2^{-i}\theta) = \frac{\sin(\theta)}{\theta}, \qquad \text{pointwise}.\]
\end{lemma}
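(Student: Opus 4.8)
The plan is to exploit the double-angle identity $\sin(2x)=2\sin x\cos x$, which rewrites each factor as $\cos(2^{-i}\theta)=\frac{\sin(2^{1-i}\theta)}{2\sin(2^{-i}\theta)}$ whenever the denominator does not vanish, so that the finite product telescopes. First I would dispose of the degenerate cases. If $\theta=0$ both sides equal $1$ (the right-hand side being understood as $\lim_{s\to0}\sin s/s$). If $\theta=m\pi$ with $m$ a nonzero integer, write $m=2^j r$ with $r$ odd and $j\ge0$; then $2^{-(j+1)}\theta=\frac{r\pi}{2}$, hence $\cos(2^{-(j+1)}\theta)=0$ and the partial products vanish for every $n\ge j+1$, in agreement with $\sin(m\pi)/(m\pi)=0$.

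In the remaining case $\theta\notin\pi\mathbb{Z}$ one has $2^{-i}\theta\notin\pi\mathbb{Z}$ for every $i\ge1$: indeed $2^{-i}\theta\in\pi\mathbb{Z}$ would force $\theta\in 2^i\pi\mathbb{Z}\subseteq\pi\mathbb{Z}$. Thus every $\sin(2^{-i}\theta)$ is nonzero and the rewriting above is licit, giving
\[
\prod_{i=1}^n\cos(2^{-i}\theta)=\frac{1}{2^n}\prod_{i=1}^n\frac{\sin(2^{1-i}\theta)}{\sin(2^{-i}\theta)}=\frac{\sin\theta}{2^n\sin(2^{-n}\theta)},
\]
where the middle product telescopes because its numerator runs over $\sin(2^{0}\theta),\dots,\sin(2^{-(n-1)}\theta)$ and its denominator over $\sin(2^{-1}\theta),\dots,\sin(2^{-n}\theta)$. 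Finally $2^{-n}\theta\to0$ together with $\lim_{s\to0}\sin s/s=1$ yields $2^n\sin(2^{-n}\theta)\to\theta$, so the partial products converge to $\sin\theta/\theta$, which is the claim. (Note that in the only situation where the lemma is actually invoked, namely Theorem \ref{thm:rajalaCDqKNegative} with $\theta=\frac D4\sqrt{K/(N-1)}\in(0,\pi/4)$, just this generic case is needed.)

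There is essentially no serious obstacle: the whole content is the classical Vieta-type telescoping, and the only point requiring mild care is bookkeeping which factors $\sin(2^{-i}\theta)$ may vanish — precisely what the case split $\theta\in\pi\mathbb{Z}$ versus $\theta\notin\pi\mathbb{Z}$ settles.
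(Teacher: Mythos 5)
Your proof is correct and follows essentially the same route as the paper: the double-angle identity $\cos(2^{-i}\theta)=\tfrac12\sin(2^{-i+1}\theta)/\sin(2^{-i}\theta)$, telescoping of the partial product, and the limit $2^n\sin(2^{-n}\theta)\to\theta$. Your extra bookkeeping of the degenerate cases $\theta\in\pi\mathbb{Z}$ is a welcome refinement that the paper's proof silently skips, but it does not change the argument.
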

\begin{proof}
Recalling the identity $\cos(2^{-i}\theta) = \tfrac{1}{2} \frac{\sin(2^{-i+1}\theta)}{\sin(2^{-i}\theta)}$, we have for every $n \in \N$: 
\[ \prod_{i=1}^n \cos(2^{-n}\theta) = \frac{1}{2^n}\prod_{i=1}^n \frac{\sin(2^{-i+1}\theta)}{\sin(2^{-1}\theta)} =  \frac{2^{-n}\theta}{\theta}\frac{\sin(\theta)}{\sin(2^{-n}\theta) }. \]
The claim follows simply by taking the limit as $n$ goes to infinity.
\end{proof}
\appendix
\section{Detecting the Sobolev space with a single test plan}\label{A}
This appendix is devoted to the study of master test plans on metric measure spaces, i.e. test plans that are capable to detect the Sobolev space and weak upper gradients. This notion has been the main object of the study in \cite{Pasqualetto20}, where the author asked whether this special object exists in \cite[Problem 2.7]{Pasqualetto20}. We will perform this analysis first on arbitrary metric measure spaces to provide a positive answer to this problem and then move to BIP-spaces where we can actually achieve a more sophisticated result.

Finally, we mention the closely related article \cite{NobPasSchu21} where a similar investigation has been conducted for the space of $BV$ functions.

\subsection{Master test plans on arbitrary metric measure spaces}
Let us start by defining the main object of this Appendix.
\begin{definition}[Master $q$-test plan]\label{A1}
Let $\Xdm$ be a metric measure space and $q\in(1,\infty)$. A \emph{master }$q$\emph{-test plan} $\ppi_q$ is a $q$-test plan so that:

if $f \colon \X \to \R$ Borel and $G\in L^p(\mm)$ with $G\ge 0$ are so that
\[ |f(\gamma_1)-f(\gamma_0)| \le \int_0^1 G(\gamma_t)\msp \, \d t,\qquad \ppi_q\text{-a.e. }\gamma,\]
then $f \in S^p(\X)$ and $G$ is a $p$-weak upper gradient.
\end{definition}
We point out that the definition is given differently from the original one in \cite[Definition 2.5]{Pasqualetto20} where the function $f$ is assumed to be Sobolev. The main reason is that, differently from there, we are going to prove that master test plans are also able to detect the full Sobolev space and not only the minimal weak upper gradients.

We now come to the first main result of this part giving a positive answer to \cite[Problem 2.7]{Pasqualetto20}.
\begin{theorem}\label{thm:Xdmpiq}
Let $\Xdm$ be a metric measure space. Then, for every $q \in (1,\infty)$, there exists a master $q$-test plan $\ppi_{q}$.
\end{theorem}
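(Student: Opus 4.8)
The plan is to obtain $\ppi_q$ as a rapidly decaying countable convex combination of $q$-test plans, where the plans involved are extracted, by a soft separability argument, so that they already ``see'' the whole Sobolev class together with its weak upper gradients.

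\emph{Step 1 (a countable detecting family).} Work in the space $Z:=L^0(\mm)\times L^p_+(\mm)$, where $L^p_+(\mm)$ is the closed convex cone of non-negative elements of $L^p(\mm)$ and $L^0(\mm)$ carries the topology of local convergence in measure; since $\mm$ is $\sigma$-finite and $\X$ is separable, $L^0(\mm)$ and $L^p(\mm)$ are separable and metrizable, hence so is $Z$, and therefore $Z$ is second countable. Recall also that, by i) of Definition \ref{def:qplan}, whether $f\in S^p(\X)$ and whether $G$ is a $p$-weak upper gradient of $f$ depend only on the $\mm$-a.e.\ classes of $f$ and $G$. For a $q$-test plan $\pi$ set
\[
\mathcal S_\pi:=\Big\{(f,G)\in Z\ :\ \int|f(\gamma_1)-f(\gamma_0)|\,\d\pi\le\iint_0^1 G(\gamma_t)\msp\,\d t\,\d\pi\Big\},
\]
which is well defined on $Z$ because $(\e_t)_\sharp\pi\le\Comp(\pi)\mm$. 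The map $G\mapsto\iint_0^1 G(\gamma_t)\msp\,\d t\,\d\pi$ is $L^p$-Lipschitz by \eqref{eq:finiteRHS}, while $L^0(\mm)\ni f\mapsto\int|f(\gamma_1)-f(\gamma_0)|\,\d\pi\in[0,\infty]$ is lower semicontinuous: if $f_k\to f$ in $L^0(\mm)$, pass to an $\mm$-a.e.\ convergent subsequence, note that $f_k(\gamma_i)\to f(\gamma_i)$ for $\pi$-a.e.\ $\gamma$ ($i=0,1$, using $(\e_i)_\sharp\pi\ll\mm$), and apply Fatou's lemma. Hence each $\mathcal S_\pi$ is closed in $Z$, and by Definition \ref{def:Sp} one has
\[
\{(f,G)\in Z:\ f\in S^p(\X),\ G\text{ is a }p\text{-weak upper gradient of }f\}=\bigcap_{\pi}\mathcal S_\pi,
\]
the intersection being over all $q$-test plans (of which there is at least one, e.g.\ the law of the constant curves distributed as $\mm\restr{B}/\mm(B)$ for a bounded Borel $B$ with $0<\mm(B)<\infty$). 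A second countable space is hereditarily Lindel\"of, so the family of open sets $\{Z\setminus\mathcal S_\pi\}_\pi$ contains a countable subfamily with the same union; equivalently there are $q$-test plans $(\pi_n)_{n\in\N}$ with $\bigcap_n\mathcal S_{\pi_n}=\bigcap_\pi\mathcal S_\pi$. We may moreover assume that the constant-curve plan above is among the $\pi_n$ (this does not change $\bigcap_n\mathcal S_{\pi_n}$, since $\mathcal S_\pi\supseteq\bigcap_{\pi'}\mathcal S_{\pi'}$ for every $\pi$), so the family is nonempty. Consequently, $(f,G)\in\mathcal S_{\pi_n}$ for all $n$ already forces $f\in S^p(\X)$ with $G$ a $p$-weak upper gradient.

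\emph{Step 2 (gluing and conclusion).} Pick $c_n>0$ with $c_n\big(1+\Comp(\pi_n)+\rmKe_q(\pi_n)\big)\le 2^{-n}$ and then rescale them so that $\sum_n c_n=1$; set $\ppi_q:=\sum_n c_n\pi_n\in\cP(C([0,1],\X))$. Then $(\e_t)_\sharp\ppi_q=\sum_n c_n(\e_t)_\sharp\pi_n\le\big(\sum_n c_n\Comp(\pi_n)\big)\mm$ for every $t\in[0,1]$, and $\rmKe_q(\ppi_q)=\sum_n c_n\rmKe_q(\pi_n)<\infty$ by monotone convergence, so $\ppi_q$ is a $q$-test plan. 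Now let $f$ be Borel and $G\in L^p(\mm)$ with $G\ge 0$ and $|f(\gamma_1)-f(\gamma_0)|\le\int_0^1 G(\gamma_t)\msp\,\d t$ for $\ppi_q$-a.e.\ $\gamma$. Since $c_n>0$ for every $n$ we have $\pi_n\ll\ppi_q$, so this pointwise inequality holds $\pi_n$-a.e.\ for each $n$; integrating it against $\pi_n$ (the right-hand side being finite by \eqref{eq:finiteRHS}) gives $(f,G)\in\mathcal S_{\pi_n}$ for all $n$, whence Step 1 yields $f\in S^p(\X)$ with $G$ a $p$-weak upper gradient. Thus $\ppi_q$ is a master $q$-test plan, which in particular answers \cite[Problem 2.7]{Pasqualetto20}.

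\emph{On the main difficulty.} Everything hinges on Step 1: one must verify that the two ``deficiency'' functionals entering $\mathcal S_\pi$ possess precisely the semicontinuity needed for $\mathcal S_\pi$ to be closed in the separable space $Z$ (lower semicontinuity in $f$ via Fatou, continuity in $G$ via \eqref{eq:finiteRHS}), and one must phrase the whole matter in terms of $\mm$-a.e.\ classes so that $Z$ is genuinely separable and metrizable. Once this is in place, the reduction to countably many test plans is a routine use of second countability, and the gluing together with the final verification are elementary.
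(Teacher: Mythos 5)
Your proof is correct, but the crucial reduction to countably many test plans is done by a genuinely different mechanism than in the paper. The paper fixes the countable family first: it uses separability of $\cP(C([0,1],\X))$ to pick countable weakly dense subsets of the classes $\Pi_{\alpha,\beta}$ of plans with compression $\le\alpha$ and energy $\le\beta$ ($\alpha,\beta\in\Q$), and then shows that validity of \eqref{def:Spineq} on this family implies it for every $q$-test plan by approximating an arbitrary plan weakly with controlled compression and energy and passing to the limit via Lemma \ref{lem:limPol} (itself a nontrivial stability statement, proved with gluing, Kolmogorov and uniform convexity of $L^q$), plus a truncation argument for the left-hand side. You instead dualize: you fix the separable metrizable space $Z=L^0(\mm)\times L^p_+(\mm)$ of pairs $(f,G)$, observe that each test plan $\pi$ cuts out a closed subset $\mathcal S_\pi$ (continuity in $G$ from \eqref{eq:finiteRHS}, lower semicontinuity in $f$ by an a.e.\ subsequence plus Fatou, both legitimate since membership in $S^p(\X)$ only depends on a.e.\ classes by the bounded compression of test plans), and invoke hereditary Lindel\"of/second countability to extract countably many plans with the same intersection; the gluing into a single plan and the final verification are then essentially the same as the paper's Steps 3--4. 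Your route is softer and more self-contained: it completely avoids Lemma \ref{lem:limPol}, weak convergence of plans, and the truncation argument, and it would apply verbatim whenever the test objects determine closed subsets of a separable space of candidates. What the paper's approach buys is a constructive countable family that is dense among all test plans with quantitative control on compression and energy (a fact of independent use, and the machinery of Lemma \ref{lem:limPol} is needed anyway for Proposition \ref{prop:sobolevPi}), whereas your Lindel\"of extraction is non-constructive; for the bare existence statement this is immaterial.
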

\begin{proof}
We fix an arbitrary $q \in (1,\infty)$ and subdivide the proof in different steps.

\noindent\textsc{Step 1.} Let us start defining for every $\alpha,\beta>0$ the set
\[ \Pi_{\alpha,\beta} := \{ \pi \in \cP(C([0,1],\X)) \colon \Comp(\pi)\le \alpha, \rmKe_q(\pi)\le \beta\}.\]
Recalling that $C([0,1],\X)$ with $\sfd_{\sup}$ is a complete and separable metric space, we have that $\cP(C([0,1],\X))$ is weakly separable. Therefore, we can consider for every $\alpha,\beta \in \Q$ countable and dense sets $\cA_{\alpha,\beta}\subset \Pi_{\alpha,\beta}$ and finally set
\[ \cA := \bigcup_{\alpha,\beta \in \Q} \cA_{\alpha,\beta}.\]
By construction, $\cA$ is countable.

\noindent\textsc{Step 2.} Let us then fix arbitrary $f$ Borel and $G \in L^p(\mm)$ with $G\ge 0$. We claim that $f \in S^p(\X)$ and $G$ is a $p$-weak upper gradient if and only if 
\begin{equation}
 \int |f(\gamma_0)-f(\gamma_1)|\, \d \pi \le \iint_0^1 G(\gamma_t)\msp\, \d t \d \pi,\qquad \forall \pi \in \cA.\label{eq:SobA}
\end{equation}
Obviously, we shall only prove the if-implication, as the converse is straightforward. To this aim, we fix an arbitrary $q$-test plan $\pi$ and consider sequences $(\alpha_n),(\beta_n)\subset \Q$ so that $\alpha_n \downarrow \Comp(\pi)$ and $\beta_n \downarrow \rmKe_q(\pi)$ as $n$ goes to infinity. Being $\pi \in \Pi_{\alpha_n,\beta_n}$ for all $n \in \N$, thanks to a diagonalization argument, we can find a sequence $\pi_n \in \cA$ so that
\[ \pi_n \weakto \pi, \qquad \text{and}\qquad \begin{array}{l}
\limsup_{n\to \infty}\rmKe_q(\pi_n) \le \rmKe_q(\pi),\\
\limsup_{n\to \infty}\Comp(\pi_n) \le \Comp(\pi).
\end{array}
\]
We observe that, passing the limit in \eqref{eq:SobA} with $\pi =\pi_n$ would give, given the arbitrariness of $\pi$, that $f \in S^p(\X)$ and $G$ is a $p$-weak upper gradient. To this aim, we invoke Lemma \ref{lem:limPol} to get that
\[ 
\lim_{n\to \infty}\iint_0^1G(\gamma_t)\msp\, \d t\d \pi_n = \iint_0^1G(\gamma_t)\msp\, \d t\d \pi.
\]
Now, arguing as in the proof of Proposition \ref{prop:sobolevPi}, it suffices to take to the limit the term $\int f(\gamma_1)-f(\gamma_0)|\, \d \pi_n$ without the absolute value inside to conclude. This can be done as the uniformly bounded compression of $(\pi_n)$ ensures that $\int f(\gamma_t)\, \d \pi_n \to \int f(\gamma_t)\, \d \pi$ as $n$ goes to infinity for every $t \in [0,1]$ when $f $ is bounded. For the general case, we argue with a truncation argument, again as in the proof of Proposition \ref{prop:sobolevPi}. 

\noindent\textsc{Step 3.} We now follow closely the strategy of \cite{Pasqualetto20} to reduce the countable collection $\cA$ to a single plan. We include all the details for completeness. Let then $(\pi_k)_{k \in \N}$ be an enumeration of $\cA$ and set
\[ \eta := \sum_{k\in \N} \frac{\pi_k}{2^{k}\max \{\Comp(\pi^{k}),\rmKe_q(\pi_k), 1\}}, \qquad \ppi_q := \frac{\eta}{\eta(C([0,1],\X)}.\]
The definition is well posed as $\eta(C([0,1],\X)) \le \sum_{n,l,i} 2^{-k} =1$. We claim that the so-defined plan is $q$-test plan. Indeed, it is by definition a probability measure on $C([0,1],\X)$ and, given any $t \in [0,1]$, the estimate
\[ (\e_t)_\sharp \eta \le \sum_{k \in \N}\frac{(\e_t)_\sharp\pi^{k}}{2^{k}\Comp(\pi^{k})} \le \sum_{n \in \N}2^{-k}\mm = \mm,\]
ensures that $\ppi_q$ has bounded compression. Moreover, we can estimate 
\[ 
\iint_0^1 \msp^q\, \d t\d \eta \le\sum_{k \in \N}\frac{1}{2^{k}\rmKe_q(\pi_k)}\iint_0^1 \msp^q \, \d t \d \pi_k = \sum_{k \in \N} 2^{-k} =1,\]
to show that  $\rmKe_q(\ppi) <\infty$ and consequently that $\ppi_q$ is a $q$-test plan.

\noindent\textsc{Step 4.} We conclude the proof by proving that $\ppi_q$ is a master $q$-test plan. By construction, for every $f$ Borel and $G \in L^p(\mm)$ with $G\ge 0$ we have that $|f(\gamma_1)-f(\gamma_0)|\le \int_0^1 G(\gamma_t)\msp \, \d t$ holds $\ppi_q$-a.e. if and only if it holds $\pi$-a.e. for every $\pi \in \cA$. Integrating then yields \eqref{eq:SobA} which in turn implies that $f \in S^p(\X)$ and $G$ is a $p$-weak upper gradient. The proof is now concluded.
\end{proof}

\subsection{Master test plan on BIP-spaces}
Here, we specialize the previous analysis to the context of BIP-spaces. In this case, we will also achieve that there exists a unique master test plan independent on $q$ that is also concentrated on  geodesics.

A special role here will be played by the set 
\[ {\sf Geod}(\X):= \cup_{q \in (1,\infty)} {\sf Geod}_q(\X)\]
which, recalling \eqref{eq:Geodq}, is a set of $\infty$-test plans thanks to the (BIP). Our first task is to reduce the class ${\sf Geod}(\X)$ given by the (BIP) to a countable number of plans, yet taking care that they are still capable of detecting the Sobolev space as in Proposition \ref{prop:sobolevPi}.
\begin{proposition}\label{A2}
Let $\Xdm$ be a BIP-space. Then, there exists a countable family $\mathcal D\subset {\sf Geod}(\X)$ of $\infty$-test plans concentrated on geodesics such that:

for every $p \in (1,\infty)$ and $f \colon \X \to \R$ Borel and $G \in L^p(\X)$ with $G\ge 0$, the following are equivalent 
\begin{itemize}
\item[{\rm i)}] $f \in S^p(\X)$ and $G$ is a $p$-weak upper gradient;

\item[{\rm ii)}]it holds \[ \int |f(\gamma_1)-f(\gamma_0)|\, \d \pi \le \iint_0^1 G(\gamma_t)\msp \, \d t\d \pi,\qquad \forall \pi \in\mathcal D.\]
\end{itemize}
\end{proposition}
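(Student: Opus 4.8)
The plan is to reduce the uncountable collection ${\sf Geod}(\X)$ to a countable one while preserving its Sobolev-detecting power, following the blueprint of Proposition \ref{prop:sobolevPi} together with the separability arguments already used in the proof of Theorem \ref{thm:Xdmpiq}. First I would recall that by Proposition \ref{prop:sobolevPi} a Borel function $f$ together with a nonnegative $G\in L^p(\X)$ satisfies {\rm i)} if and only if \eqref{eq:persob} holds for every $\pi\in{\sf Geod}_{q}(\X)$, where $q$ is the conjugate exponent of $p$; and by \eqref{eq:geodtest} each element of ${\sf Geod}_{q'}(\X)$ is simultaneously a $q$-test plan for all $q\in(1,\infty)$, so in fact it suffices to test \eqref{eq:persob} against all plans in ${\sf Geod}(\X)=\cup_{q}{\sf Geod}_q(\X)$ (and the choice of $q$ vs.\ the exponent $p$ becomes immaterial). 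The goal is therefore to find a \emph{countable} $\mathcal D\subset{\sf Geod}(\X)$ such that testing \eqref{eq:persob} against $\mathcal D$ already forces it for every $\pi\in{\sf Geod}(\X)$.

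The key step is an approximation/density argument in the space of plans. I would stratify ${\sf Geod}(\X)$ by the relevant quantitative parameters: the diameter bound $D$, the compression constant $C_q(D)$, and the kinetic energy (which for geodesic plans equals $W_q^q$ and is controlled by $D^q$). For rational values of these parameters one gets, for each fixed $q\in\Q\cap(1,\infty)$, a set of plans that is weakly precompact (by Prokhorov, exactly as in the proof of Lemma \ref{lem:approxPol} and Theorem \ref{thm:Xdmpiq}, using that a uniform compression bound plus a uniform diameter bound yields tightness of $\{(\e_t)_\sharp\pi\}$ and hence tightness of the plans), and hence weakly separable. Choosing a countable weakly dense subset $\mathcal D_{q,D,\ldots}$ in each such stratum and letting $\mathcal D$ be the countable union over the rational parameters gives the candidate family. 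The crucial point to check is that weak density suffices: given an arbitrary $\pi\in{\sf Geod}_q(\X)$ one picks $\pi_n\in\mathcal D$ with $\pi_n\weakto\pi$ and with $\lims_n\rmKe_q(\pi_n)\le\rmKe_q(\pi)$ and $\lims_n\Comp(\pi_n)\le\Comp(\pi)$ (arranged by taking rational parameters decreasing to the true ones and diagonalizing, exactly as in \textsc{Step 2} of the proof of Theorem \ref{thm:Xdmpiq}); then one passes to the limit in \eqref{eq:persob}. For this passage, Lemma \ref{lem:limPol} handles the right-hand side $\iint_0^1 G(\gamma_t)\msp\,\d t\,\d\pi_n\to\iint_0^1 G(\gamma_t)\msp\,\d t\,\d\pi$, and the uniform compression bound handles the left-hand side after truncating $f$ to $f^k$ and using monotone convergence in $k$, precisely as in the proof of Proposition \ref{prop:sobolevPi} (splitting the plan into the parts where $f(\gamma_1)-f(\gamma_0)$ has constant sign so that the truncation interacts well with the absolute value). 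This yields {\rm ii)}$\Rightarrow$\eqref{eq:persob} for all $\pi\in{\sf Geod}(\X)$, hence {\rm i)} by Proposition \ref{prop:sobolevPi}; the converse {\rm i)}$\Rightarrow${\rm ii)} is immediate since $\mathcal D$ consists of $q'$-test plans.

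The main obstacle I anticipate is purely bookkeeping rather than conceptual: ensuring that a \emph{single} countable family $\mathcal D$ works simultaneously for \emph{all} exponents $p\in(1,\infty)$. This is resolved by the observation already emphasized in \eqref{eq:geodtest} — every plan in $\mathcal D$, being concentrated on geodesics of bounded length, is an $\infty$-test plan and thus a $q'$-test plan for every $q'$ — so the countable union over rational $q$ automatically serves every real $p$. One also needs the elementary fact that a plan with a given rational diameter/compression bound can be weakly approximated by plans with the \emph{same} bounds lying in the chosen countable dense subset; this is where one must be slightly careful, approximating instead by plans with \emph{slightly larger} rational bounds and then using the diagonalization. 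All the analytic content (tightness, weak lower semicontinuity of $\rmKe_q$, the limit passage in the defining inequality) has already been isolated in Lemmas \ref{lem:closuremsp}, \ref{lem:approxPol}, \ref{lem:limPol} and Proposition \ref{prop:sobolevPi}, so the proof essentially assembles these ingredients. I would close by noting that $\mathcal D$ can be taken to consist of plans concentrated on geodesics since every stratum already has that property, which is exactly what is needed for the subsequent construction of a master test plan concentrated on geodesics.
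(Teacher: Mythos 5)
Your overall strategy — countable weakly dense subsets of parameter-bounded subfamilies of ${\sf Geod}(\X)$, followed by a limit passage in the defining inequality — is the same as the paper's, but as written there are two concrete gaps. The main one is the indexing of the strata by rational $q$: your family $\mathcal D$ is then contained in $\bigcup_{q'\in\Q\cap(1,\infty)}{\sf Geod}_{q'}(\X)$, whereas for a Sobolev exponent $p$ whose conjugate $q$ is irrational, Proposition \ref{prop:sobolevPi} requires the inequality for \emph{every} plan of ${\sf Geod}_q(\X)$ with that $q$. Such a plan need not be reachable from any rational-$q$ stratum: $W_q$-optimality of a geodesic plan does not imply $W_{q'}$-optimality for $q'\neq q$, and the compression constant $C_{q'}(D)$ entering the definition of ${\sf Geod}_{q'}(\X)$ is itself $q'$-dependent, so diagonalizing over ``slightly larger rational bounds'' does not produce approximants of it; the observation \eqref{eq:geodtest} only says that plans in $\mathcal D$ are test plans for every exponent, which is not the point. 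The paper avoids this by stratifying only by numerical bounds, $\Pi_k:=\{\pi\in{\sf Geod}(\X)\colon \Comp(\pi)\le k,\ {\rm supp}((\e_i)_\sharp\pi)\subseteq B_k(\bar x),\ i=0,1\}$, and taking countable dense subsets of these subsets of the \emph{full} union over all real $q$; density only needs separability of $\cP(C([0,1],\X))$, not an enumeration of exponents, and the limit passage does not care which exponent the approximants come from because on geodesic plans $\msp\equiv\sfd(\gamma_0,\gamma_1)$, a continuous function bounded on the relevant curves, so the right-hand side converges using the uniform compression alone (equivalently, the energy condition of Lemma \ref{lem:limPol} becomes automatic once the supports are confined to $B_k(\bar x)$). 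Relatedly, your claim that a compression bound plus a diameter bound yields tightness is false without such a localization — mass can sit anywhere in the space — though this is harmless since only separability, not compactness, is actually needed.

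The second gap is the handling of the absolute value: you propose to obtain \eqref{eq:persob} for every $\pi\in{\sf Geod}(\X)$ by splitting $\pi$ into the sets where $f(\gamma_1)-f(\gamma_0)$ has constant sign, but the normalized restrictions of a plan in ${\sf Geod}_q(\X)$ generally leave ${\sf Geod}_q(\X)$ (the compression bound must hold with constant $C_q(D)$ relative to the restricted plan's \emph{own} marginal densities, which may drop), so these pieces are not covered by your approximation through $\mathcal D$. The paper instead only extracts the inequality \emph{without} absolute value, $\int f(\gamma_1)-f(\gamma_0)\,\d\pi\le\iint_0^1 G(\gamma_t)\msp\,\d t\,\d\pi$ for all $\pi\in{\sf Geod}(\X)$ (note that hypothesis {\rm ii)} passes to the truncations $f^k$ since $|f^k(\gamma_1)-f^k(\gamma_0)|\le|f(\gamma_1)-f(\gamma_0)|$, which is what makes the limit of the left-hand side unproblematic), and then re-runs the approximation scheme of Proposition \ref{prop:sobolevPi}, where the sign-splitting is performed on the arbitrary $q$-test plan — a class stable under restriction — rather than on the ${\sf Geod}$ plan being approximated. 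With these two corrections your argument aligns with the paper's proof.
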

\begin{proof}
We subdivide the proof in a reduction step and afterwards, we prove the equivalence.

\noindent\textsc{Reduction.} Let $\bar x \in \X$ be a point and consider, for every $k \in \N$, the set of plans
\[ \Pi_k := \big\{ \pi \in {\sf Geod}(\X) \colon \Comp(\pi) \le k,  \text{ supp}((\e_i)_\sharp\pi) \subseteq B_k(\bar x), \ i=0,1\big\}.\]
Fix $k \in \N$, any $\pi \in \Pi_k$ is concentrated on geodesics lying in $B_k(\bar x)$ and $(\e_t)_\sharp \pi \le k \mm\restr{B_k(\bar x)}$ for every $t \in [0,1]$. Hence, the family $\{(\e_t)_\sharp \pi \colon t \in [0,1],\pi \in \Pi_k\}$ is tight and, by Prokhorov's Theorem \ref{thm:prokh}, there exists a functional $\psi \colon \X \to \R$ with compact sublevels so that \[\sup_{\pi \in \Pi_k,t \in [0,1]}\int \psi \, \d (\e_t)_\sharp \pi<\infty.\] Then, arguing as in \cite[Lemma 5.8]{GMS15} (or as in Lemma \ref{lem:approxPol}) and recalling that only geodesics with uniformly bounded length are to be considered, we can consider lifting $\psi$ to the functional $\Psi \colon C([0,1],\X) \to \R$, defined via $\Psi(\gamma) := \int\psi(\gamma_t)\, \d t + \sfd(\gamma_0,\gamma_1)$ if $\gamma \in \Geo(\X)$ and $+\infty$ otherwise,  that has compact sublevels and satisfies
\[ \sup_{\pi \in \Pi_k} \int \Psi(\gamma)\, \d \pi < \infty.\]
Using again Prokhorov's Theorem \ref{thm:prokh}, we get that $\Pi_k$ is relative compact in the weak topology of $\cP(C([0,1],\X))$. Now, for every $k\in \N$, consider a countable and dense collection $\mathcal D_k\subset \Pi_k$ and lastly define
\begin{equation}
\mathcal D:= \bigcup_{k\in \N} \mathcal D_k \subseteq {\sf Geod}(\X). \label{eq:cPN}
\end{equation} 
It is then obvious by construction that the class $\mathcal D$ is a countable collection of $\infty$-test plans concentrated on geodesics.

\noindent\textsc{Equivalence.} The implication $(i)\Rightarrow (ii)$ is obvious. For the converse $(ii)\Rightarrow (i)$, we remark that it is sufficient to show 
\begin{equation} \int f(\gamma_1)-f(\gamma_0)\, \d \pi \le \iint_0^1 G(\gamma_t)\msp \, \d t\d \pi,\qquad \forall \pi \in {\sf Geod}(\X),\label{eq:Ainter}
\end{equation}
as the conclusion will then follow invoking Proposition \ref{prop:sobolevPi} and arguing as in the proof of Proposition \ref{prop:sobolevPi} to improve from the above inequality to the one having $\int| f(\gamma_1)-f(\gamma_0)|\, \d \pi$ at the left hand side. Then, we pick $\pi \in {\sf Geod}(\X)$ and observe that there exists $k \in \N$ so that $\pi \in \Pi_k$. Then, consider a sequence $(\pi_n)\subseteq \mathcal D$ so that $\pi_n\weakto \pi$ as $n$ goes to infinity and, by construction, we can take in $\mathcal D_k$ for a suitable $k$). Then, the hypotheses ensure that
\begin{equation}
\int f(\gamma_1)-f(\gamma_0)\, \d \pi^n (\gamma)\le \iint_0^1 G(\gamma_t)\msp\, \d t\d \pi^n(\gamma)=\iint_0^1 G(\gamma_t)\sfd(\gamma_0,\gamma_1)\, \d t\d \pi^n(\gamma),\qquad \forall n \in \N,\label{A:ineq1}
\end{equation} 
having used the fact that $\pi_n$ is concentrated on geodesics in the last step. Since the function $\gamma\mapsto\sfd(\gamma_0,\gamma_1)$ is continuous and bounded on bounded sets, the plans $(\sfd(\gamma_0,\gamma_1)\,\d \pi_n(\gamma))$ weakly converge to $\sfd(\gamma_0,\gamma_1)\,\d \pi(\gamma)$. Since clearly they have uniformly bounded compression, by arguing as in the proof of Proposition \ref{prop:sobolevPi} we see that
\[
\iint_0^1 G(\gamma_t)\sfd(\gamma_0,\gamma_1)\, \d t\d \pi_n(\gamma)\quad\to\quad \iint_0^1 G(\gamma_t)\sfd(\gamma_0,\gamma_1)\, \d t\d \pi(\gamma)
\]
To pass to the limit in the left hand side of \eqref{A:ineq1}  we can argue e.g. as in the proof of Proposition \ref{prop:sobolevPi}, again using the assumption of uniformly bounded compression. Finally, we achieved \eqref{eq:Ainter} and the conclusion.
\end{proof}
Mimicking an argument in \cite{Pasqualetto20},  we can pass from a countable collection of plans detecting the minimal $p$-weak upper gradient to just one.

\begin{theorem}\label{A3}
Let $\Xdm$ be a BIP-space. Then, there exists a $\infty$-test plan $\ppi_{\sf master}$ concentrated on geodesics so that 
\[ \ppi_{\sf master}\text{ is a master }q\text{-plan},\qquad \forall q \in (1,\infty).\]
\end{theorem}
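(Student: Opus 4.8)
The plan is to combine Proposition \ref{A2} with the same diagonal/superposition trick used in the proof of Theorem \ref{thm:Xdmpiq}, but now applied to the countable family $\mathcal D = \{\pi_k\}_{k\in\N}$ of $\infty$-test plans concentrated on geodesics produced by Proposition \ref{A2}. First I would observe that each $\pi_k\in\mathcal D$ is an $\infty$-test plan, hence in particular a $q$-test plan for every $q\in(1,\infty)$ with a compression constant $\Comp(\pi_k)$ that does not depend on $q$; moreover, being concentrated on geodesics lying in some ball $B_k(\bar x)$, the kinetic energy $\rmKe_q(\pi_k)=\int\sfd^q(\gamma_0,\gamma_1)\,\d\pi_k$ is finite for every $q$ and bounded uniformly in $q$ on compact $q$-intervals (in fact bounded by $(2k)^q$, or just by $1\vee(2k)^q$). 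This uniformity is what will let a single plan work for all exponents simultaneously.

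Next I would set
\[
\eta := \sum_{k\in\N}\frac{\pi_k}{2^k\,\max\{\Comp(\pi_k),\,\diam(B_k(\bar x))\vee 1,\,1\}},\qquad \ppi_{\sf master}:=\frac{\eta}{\eta(C([0,1],\X))},
\]
so that $\eta$ is a finite nonzero measure and $\ppi_{\sf master}$ is a probability measure on $C([0,1],\X)$. As in Step 3 of the proof of Theorem \ref{thm:Xdmpiq}, the normalization by $\Comp(\pi_k)$ guarantees $(\e_t)_\sharp\ppi_{\sf master}\le C\mm$ for all $t$, so condition (i) of Definition \ref{def:qplan} holds; and the normalization by $\diam(B_k(\bar x))$ guarantees that $\ppi_{\sf master}$ is concentrated on equi-Lipschitz geodesics, since each $\pi_k$ is concentrated on geodesics of length at most $\diam(B_k(\bar x))$ and the weighting keeps these lengths uniformly bounded. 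Hence $\ppi_{\sf master}$ is an $\infty$-test plan concentrated on geodesics, and a fortiori a $q$-test plan for every $q\in(1,\infty)$.

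Finally I would verify the master property for an arbitrary $q\in(1,\infty)$. Fix $f$ Borel and $G\in L^p(\mm)$, $G\ge 0$, and suppose $|f(\gamma_1)-f(\gamma_0)|\le\int_0^1 G(\gamma_t)|\dot\gamma_t|\,\d t$ holds $\ppi_{\sf master}$-a.e.. Since each $\pi_k$ is absolutely continuous with respect to $\ppi_{\sf master}$ (up to the positive constant in front of it in the definition of $\eta$), the same inequality holds $\pi_k$-a.e.\ for every $k\in\N$; integrating against $\pi_k$ gives exactly condition (ii) of Proposition \ref{A2} for the countable family $\mathcal D$. Proposition \ref{A2} then yields $f\in S^p(\X)$ with $G$ a $p$-weak upper gradient, which is precisely the conclusion of Definition \ref{A1}. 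Since $q$ was arbitrary, $\ppi_{\sf master}$ is a master $q$-plan for all $q\in(1,\infty)$.

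I do not expect any serious obstacle here: the only points requiring a small amount of care are (a) checking that the weighting constants in the definition of $\eta$ simultaneously control compression and Lipschitz length, so that $\ppi_{\sf master}$ genuinely is an $\infty$-test plan (and not merely a $q$-test plan for one exponent), and (b) noting that the family $\mathcal D$ from Proposition \ref{A2} is fixed once and for all and works for every $p$, so that the single plan $\ppi_{\sf master}$ inherits the $p$-independence automatically. Both are immediate consequences of the construction in Proposition \ref{A2} together with the elementary estimates already used in Step 3 of the proof of Theorem \ref{thm:Xdmpiq}.
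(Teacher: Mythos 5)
There is a genuine gap at the point where you claim that your plan is an $\infty$-test plan. Dividing the measure $\pi_k$ by the constant $\diam(B_k(\bar x))\vee 1$ rescales its total mass but does not touch the curves it charges: the support of $\frac{\pi_k}{2^k c_k}$ is exactly the support of $\pi_k$. Since the family $\mathcal D$ produced by Proposition \ref{A2} contains, for every $k$, plans concentrated on geodesics joining points of $B_k(\bar x)$, i.e.\ geodesics of length (hence Lipschitz constant) up to $2k$, your mixture $\ppi_{\sf master}$ charges curves of arbitrarily large Lipschitz constant as soon as $\X$ is unbounded (e.g.\ $\R^n$ with Lebesgue measure, which is a BIP-space). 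Hence condition {\rm ii')} of Definition \ref{def:qplan} fails and $\ppi_{\sf master}$ is not an $\infty$-test plan, so the statement as formulated is not proved. (What your construction does give, with the kinetic-energy estimate $\rmKe_q(\pi_k)\le(2k)^q$, is a single plan concentrated on geodesics that is a $q$-test plan and a master $q$-plan for every $q$ simultaneously — the verification of the master property via absolute continuity of each $\pi_k$ w.r.t.\ $\ppi_{\sf master}$ and Proposition \ref{A2} is fine — but this is strictly weaker than the claimed $\infty$-test plan property.)

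The missing idea, which is exactly how the paper proceeds, is to first \emph{cut the curves}, not the mass: replace $\mathcal D$ by the countable family $\mathcal C$ of restricted plans $\big({\sf Restr}_{\frac{i-1}{k}}^{\frac ik}\big)_\sharp\pi$, $i=1,\dots,k$, with $k\ge\|\Lip(\gamma)\|_{L^\infty(\pi)}$ and $\pi\in\mathcal D$. Each such plan is concentrated on (reparametrized) geodesic segments of Lipschitz constant bounded by a universal constant, so the weighted superposition over $\mathcal C$ (normalized only by $\Comp$, as in your Step~3 of Theorem \ref{thm:Xdmpiq}) genuinely is an $\infty$-test plan concentrated on geodesics. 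The price is that one must then recover the master property: from the $\ppi_{\sf master}$-a.e.\ inequality one gets the upper-gradient inequality along every sub-segment $[\frac{i-1}{k},\frac ik]$ for $\pi$-a.e.\ $\gamma$, and a telescoping (gluing) estimate $|f(\gamma_1)-f(\gamma_0)|\le\sum_i|f(\gamma_{\frac ik})-f(\gamma_{\frac{i-1}k})|\le\int_0^1G(\gamma_t)\msp\,\d t$ restores condition {\rm ii)} of Proposition \ref{A2} for every $\pi\in\mathcal D$. Your write-up omits both the restriction step and this gluing argument, and they are precisely what make the $\infty$-test plan claim true.
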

\begin{proof} Let $\mathcal D$ be given by Proposition \ref{A2} and $(\pi^n)$ an enumeration of the countable collection
\[
\mathcal C:=\big\{\big( { \sf Restr}_{\frac{i-1}{k}}^{\frac{i}{k}}\big)_\sharp \pi\ :\ k\geq \|\Lip(\gamma)\|_{L^\infty(\pi)},\ k\in\N,\ i=1,...,k,\ \pi\in\mathcal D\big\}\subset\cP(C([0,1],\X))
\]
and define
\[
 \eta := \sum_{{ n\in \N}} \frac{\pi^n}{2^{n}\max \{\Comp(\pi^n), 1\}}, \qquad\qquad \ppi_{\sf master} := \frac{\eta}{\eta(C([0,1],\X)}. 
\]
The definition is well posed as $\eta(C([0,1],\X)) \le \sum_{n} 2^n <\infty$. We claim that $\ppi_{\sf master}\in\cP(C([0,1],\X))$ satisfies the requirements and we start checking that it is a $\infty$-test plan.

For $t\in[0,1]$ we have
\[
(\e_t)_\sharp \pi\leq \frac1{\eta(C([0,1],\X)} \sum_{{ n\in \N}} \frac{(\e_t)_\sharp\pi^n}{2^{n}\Comp(\pi^n)}\leq \frac1{\eta(C([0,1],\X)} \sum_{{ n\in \N}} \frac{\mm}{2^{n}}=\frac{\mm}{\eta(C([0,1],\X)},
\]
and thus $\ppi_{\sf master}$ has bounded compression. Also, since every element $\pi^n$ of $\mathcal C$ is such that  $\|\Lip(\gamma)\|_{L^\infty(\pi^n)} \le 2$, we have that $\|\Lip(\gamma)\|_{L^\infty(\ppi_{\sf master})} \le 2$ as well. We thus proved that   $\ppi_{\sf master}$ is a $\infty$-test plan.

Now let  $p \in (1,\infty)$,$f$ Borel and $G\in L^p(\mm)$ with $G\ge0$, be such that
\[ 
|f(\gamma_1)-f(\gamma_0)| \le \int_0^1 G(\gamma_t)\msp \, \d t,\qquad \ppi_{\sf master}\text{-a.e. }\gamma,
\]
and notice that since by construction $ \ppi_{\sf master}$-negligible sets are also $\pi$-negligible for any $\pi\in\mathcal C$, we have
\[ 
\int |f(\gamma_1)-f(\gamma_0)| \,\d\pi(\gamma)\le \iint_0^1 G(\gamma_t)\msp \, \d t\,\d \pi(\gamma)
\]
for every $\pi\in\mathcal C$. By definition of $\mathcal C$ and a simple gluing argument it is then clear that this last inequality holds for any $\pi\in\mathcal D$, hence the conclusion follows from Proposition \ref{A2}.
\end{proof}
\begin{remark}
\rm
We point out a key remark from \cite{Pasqualetto20} which is worth to notice also in this note. The results contained in Theorem \ref{thm:Xdmpiq} and Theorem \ref{A3} are not just technical. Indeed, the existence of master test plans as in Definition \ref{A1} make it possible to identify which are the exceptional curves for which the weak upper gradient inequality $|f(\gamma_1)-f(
\gamma_0)|\leq\int_0^1|Df|_p(\gamma_t)|\dot\gamma_t|\,\d t$ fails. This could be previously done by appealing to the notion of $q$-Modulus from \cite{Koskela-MacManus98} and further employed  in \cite{Shanmugalingam00} for a systematic definition of Sobolev space. The key difference is that the $q$-Modulus is only an outer measure, while master $q$-test plans $\ppi_q$ (or, on BIP-spaces, the more powerful $\ppi_{\sf master}$ plan) are Borel probability measures.
\fr
\end{remark}

\section{Proof of Theorem \ref{thm:rajalaCDqKN}}\label{B}
The content of this Appendix is to prove Theorem \ref{thm:rajalaCDqKN}. As previously remarked, the proof goes along the same lines of \cite{Rajala12-2} (where the case of $\cd_2$-spaces is treated) hence, we shall adopt a concise style in the following to handle the case of $\cd_q$-spaces for arbitrary $q \in (1,\infty)$. Next, we denote
\[ 
C(D,K):= e^{K^-D^2/12},
\]
and recall the statement of Theorem \ref{thm:rajalaCDqKN}.
\begin{theorem}\label{B_CDqN}
Let $\Xdm$ be a $\cd_q(K,\infty)$-space for some $K \in \R$ and $D> 0$. For any $\rho_0 ,\rho_1 \in L^{\infty}(\mm)$ probability densities with diam$($supp$(\rho_0) \ \cup\ $supp$(\rho_1))<D$, there exists $\pi \in \OptGeo_q(\rho_0\mm,\rho_1\mm)$ satisfying $\mu_t:=(\e_t)_\sharp \pi \ll \mm$. Moreover, writing $\mu_t:= \rho_t\mm$, we have the following upper bound for the density
\[ \|\rho_t\|_{L^\infty(\mm)} \le C(D,K) \|\rho_0\|_{L^\infty(\mm)}\vee \|\rho_1\|_{L^\infty(\mm)},\qquad \forall t \in [0,1].\]
\end{theorem}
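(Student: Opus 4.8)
The plan is to mimic the structure of Rajala's original argument for $\cd_2(K,\infty)$ spaces, carrying it out for general $q\in(1,\infty)$. The key point throughout is that the $\cd_q(K,\infty)$ condition, via the entropy inequality together with the elementary bound $\mathrm{Ent}_\mm(\mu)\geq -\mm(\{\rho>0\})/e$ or more precisely a Jensen-type estimate, controls how much the support of an interpolating measure can shrink; this then converts, by a bisection/completion scheme, into an $L^\infty$ bound on the densities $\rho_t$.

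\textbf{Step 1 (midpoint density bound).} First I would prove the midpoint estimate: if $\rho_0,\rho_1\in L^\infty(\mm)$ are probability densities with $\mathrm{diam}(\supp(\rho_0)\cup\supp(\rho_1))<D$, then there is $\pi\in\OptGeo_q(\rho_0\mm,\rho_1\mm)$ with $\mu_{1/2}=(\e_{1/2})_\sharp\pi\ll\mm$ and a bound of the form $\mm(\{\rho_{1/2}>0\})\geq e^{-K^-D^2/8}$ (using $\|\rho_0\|_{L^\infty}\vee\|\rho_1\|_{L^\infty}\leq 1$ when both are bounded by... actually keeping the max explicit). The proof is exactly as in \cite{Rajala12-2}: take the plan $\pi$ given by $\cd_q(K,\infty)$, estimate $\mathrm{Ent}_\mm(\mu_{1/2})$ from above using the entropy inequality with $t=1/2$ and $W_q(\mu_0,\mu_1)\leq D$ (bounding the convexity defect term $-\tfrac K8 W_q^2\leq \tfrac{K^-}{8}D^2$), and from below via Jensen's inequality applied to the convex function $s\mapsto s\log s$ on the set $E:=\{\rho_{1/2}>0\}$, which gives $\mathrm{Ent}_\mm(\mu_{1/2})\geq -\log\mm(E)$. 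Combining and noting $\mathrm{Ent}_\mm(\mu_i)\leq\log(\|\rho_i\|_{L^\infty})$ yields the claim. This step is essentially identical to the $q=2$ case since nothing uses $q=2$ except the notation $W_q$ in place of $W_2$.

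\textbf{Step 2 (iteration to $L^\infty$ bound).} Next I would iterate along a dyadic subdivision of $[0,1]$. By restriction and rescaling of optimal geodesic plans (the maps $\mathsf{Restr}_s^t$), the midpoint bound applies on each sub-geodesic, and one controls $\|\rho_{j/2^n}\|_{L^\infty}$ inductively. A gluing argument (as in \cite[Lemma 2.1.1]{G11}, used already in Lemma \ref{lem:approxPol}) produces a single plan $\pi$ whose restrictions to the $2^n$ dyadic pieces are the ones supplied by Step 1. The diameter of each piece's support is at most $D$, so the constant appearing at each bisection level is at most $e^{K^-D^2/8}$; however, a more careful accounting — exactly the computation in \cite{Rajala12-2} — shows the telescoping product of the bisection constants converges to $e^{K^-D^2/12}=C(D,K)$ rather than blowing up, because at level $n$ the relevant diameter bound can be taken to shrink (each dyadic sub-geodesic has length $\leq D/2^n$ along the curve, though the support diameter does not shrink — the precise bookkeeping giving the factor $1/12$ instead of $1/8$ is the one subtlety to reproduce faithfully). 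Passing to the limit $n\to\infty$ and using lower semicontinuity / compactness of optimal geodesic plans (Lemma \ref{lem:pipmGH}-type argument, or the kinetic-energy characterization \eqref{eq:costgeo}) gives a limit plan $\pi\in\OptGeo_q(\rho_0\mm,\rho_1\mm)$ with $\|\rho_t\|_{L^\infty}\leq C(D,K)(\|\rho_0\|_{L^\infty}\vee\|\rho_1\|_{L^\infty})$ for all dyadic $t$, hence for all $t\in[0,1]$ by continuity of $t\mapsto\mu_t$ in $W_q$ together with lower semicontinuity of the $L^\infty$ norm under weak convergence (applied to $(\e_t)_\sharp\pi$ as weak limits of dyadic approximants, or directly since $(\e_t)_\sharp\pi\le C(D,K)(\cdots)\mm$ is a closed condition).

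\textbf{Main obstacle.} The substantive work is entirely in Step 2: verifying that the dyadic iteration, combined with the compactness/stability of optimal geodesic plans in the $W_q$-setting, closes up to give the uniform constant $C(D,K)=e^{K^-D^2/12}$ and not a divergent product — i.e.\ reproducing Rajala's telescoping estimate verbatim with $W_q$ in place of $W_2$ — and checking that all the tools invoked (gluing of optimal geodesic plans, weak compactness of plans with uniformly bounded compression and support, continuity of $t\mapsto(\e_t)_\sharp\pi$) hold for arbitrary $q\in(1,\infty)$, which they do since none of them is special to $q=2$. Everything else (Step 1, the final limiting argument) is routine once the bookkeeping is in place. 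Since the proof is a line-by-line transcription of \cite{Rajala12-2}, I would present it concisely, emphasizing only the points where $q$ enters — essentially just the replacement $W_2\rightsquigarrow W_q$ and the use of the restriction/rescaling identities $\Keq(\mathsf{Restr}_s^t(\gamma))=|t-s|^q\Keq(\gamma)$ already recorded in \eqref{eq:pirestr} — and refer to \cite{Rajala12-2} for the remaining details.
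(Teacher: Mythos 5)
There is a genuine gap, and it sits exactly at the point your proposal treats as routine. Your Step 1 only establishes the \emph{spreading-of-mass} estimate: for the plan $\pi$ furnished by the $\cd_q(K,\infty)$ condition, the midpoint satisfies a \emph{lower} bound on $\mm(\{\rho_{1/2}>0\})$. This gives no upper bound whatsoever on $\|\rho_{1/2}\|_{L^\infty(\mm)}$: the entropy convexity inequality only controls $\int\rho_{1/2}\log\rho_{1/2}\,\d\mm$, and the interpolant produced by the $\cd_q$ condition may perfectly well have unbounded density. Consequently your Step 2, which proposes to ``control $\|\rho_{j/2^n}\|_{L^\infty}$ inductively'' by applying ``the midpoint bound'' on each dyadic sub-geodesic, has nothing to start the induction from: a support lower bound does not propagate into a density upper bound, and no bisection/completion scheme can create one. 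The heart of Rajala's argument, which the paper reproduces for general $q$ in Appendix \ref{B}, is precisely the mechanism that converts the spreading estimate into an $L^\infty$ bound at the midpoint: one minimizes the excess-mass functional $\cF_C$ over the set $\cI^q_{1/2}(\mu_0,\mu_1)$ of $t=\tfrac12$ intermediate measures (using its $W_q$-closedness, Lemma \ref{lem:Itqclosed}, the lower semicontinuity of $\cF_C$, Lemma \ref{lem:FClsc}, and a compactness argument), and then shows by contradiction that the minimum is zero: if a minimizer had excess mass, one isolates the part of the coupling passing through $\{\rho_\mu>C+\delta\}$, replaces its midpoint by the spread-out midpoint provided by the spreading estimate, and redistributes mass (Lemma \ref{lem:redistrmass}), strictly decreasing $\cF_C$ — the quantitative selection $\mm(\{\rho_\mu>C\})\ge (M/C)^{1/4}\sup_\eta \mm(\{\rho_\eta>C\})$ is what makes the contradiction close. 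None of this appears in your proposal.

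Relatedly, you misidentify the main obstacle: the dyadic iteration, the telescoping product of the bisection constants, and the final compactness/lower-semicontinuity passage to all $t\in[0,1]$ are indeed the routine part (and, as you say, insensitive to $q$), whereas the variational selection of a good midpoint is where all the work lies and where the $q$-dependent tools (closedness of $\cI^q_t$ in $(\cP_q(\X),W_q)$, gluing/redistribution of $q$-optimal geodesic plans, lower semicontinuity of $\cF_C$ for $W_q$-convergence) actually have to be checked. Your Step 1 and the limiting argument are fine, but as written the proof does not establish the density bound at even a single interior time.
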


We face, before the proof, the necessary preparatory results. We shall adopt a concise presentation in the sequel, as the arguments in this Appendix are taken from \cite{Rajala12-2} for the case $q=2$ and applies with minor modifications in our context.

\subsubsection*{Preparatory Lemmas}
Consider for any $q \in (1.\infty)$ two measures $\mu_0,\mu_1 \in \cP_q(\X)$ with $W_q(\mu_0,\mu_1)<\infty$ and denote by 
\[ \cI^q_t(\mu_0,\mu_1) := \{\mu \in \cP_q(\X) \colon W_q(\mu_0,\mu) = tW_q(\mu_0,\mu_1), \ W_q(\mu,\mu_1) = (1-t)W_q(\mu_0,\mu_1)\},\]
the set of $t$-intermediate measures between $\mu_0,\mu_1$ where $t \in (0,1)$. 
\begin{lemma}\label{lem:Itqclosed}
Let $(\X,\sfd)$ be a metric space, $q \in (1,\infty)$ and assume $\mu_0,\mu_1 \in \cP_q(\X)$ have bounded supports. Then, for every $t \in (0,1)$, the set $\cI_t^q(\mu_0,\mu_1)$ is closed in $(\cP_q(\X),W_q)$.
\end{lemma}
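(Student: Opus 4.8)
The plan is to show that $\cI_t^q(\mu_0,\mu_1)$ is closed by taking a sequence $\mu_n \in \cI_t^q(\mu_0,\mu_1)$ with $W_q(\mu_n,\mu) \to 0$ for some $\mu \in \cP_q(\X)$ and verifying that the two defining identities pass to the limit. The key observation is that since $\mu_0,\mu_1$ have bounded supports, any $t$-intermediate measure is supported in a fixed bounded set as well: indeed if $\nu \in \cI_t^q(\mu_0,\mu_1)$ then $W_q(\mu_0,\nu) = t W_q(\mu_0,\mu_1) < \infty$, and combined with the bounded support of $\mu_0$ this forces $\spt(\nu)$ to lie in a bounded neighbourhood of $\spt(\mu_0)$ whose radius depends only on $\diam(\spt(\mu_0)\cup\spt(\mu_1))$ and $W_q(\mu_0,\mu_1)$. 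Hence along the sequence $(\mu_n)$ all supports are contained in a single bounded set, and by \eqref{eq:Wqweak} the convergence $W_q(\mu_n,\mu)\to 0$ is equivalent to weak convergence $\mu_n \weakto \mu$ together with convergence of $q$-moments.

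Next I would use the fact that $W_q$ is itself (sequentially) continuous under this convergence. More precisely, the triangle inequality gives
\[
|W_q(\mu_0,\mu_n) - W_q(\mu_0,\mu)| \le W_q(\mu_n,\mu) \to 0,
\]
and similarly $W_q(\mu_n,\mu_1) \to W_q(\mu,\mu_1)$. Passing to the limit in the identities $W_q(\mu_0,\mu_n) = t\,W_q(\mu_0,\mu_1)$ and $W_q(\mu_n,\mu_1) = (1-t)\,W_q(\mu_0,\mu_1)$ then yields $W_q(\mu_0,\mu) = t\,W_q(\mu_0,\mu_1)$ and $W_q(\mu,\mu_1) = (1-t)\,W_q(\mu_0,\mu_1)$, i.e. $\mu \in \cI_t^q(\mu_0,\mu_1)$. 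This shows the set is sequentially closed, hence closed since $(\cP_q(\X),W_q)$ is a metric space.

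The only point requiring a little care — and the step I expect to be the mildest obstacle — is the a priori bound on supports, which is what legitimizes reducing $W_q$-convergence to weak convergence plus moment convergence and, more importantly, guarantees that the limit $\mu$ still has finite $q$-moment so that it genuinely lies in $\cP_q(\X)$; without the bounded-support hypothesis the set need not be closed. Concretely, for $\nu \in \cI_t^q(\mu_0,\mu_1)$ and $x_0 \in \spt(\mu_0)$, one estimates $\big(\int \sfd^q(x,x_0)\,\d\nu\big)^{1/q} \le W_q(\nu,\mu_0) + \big(\int \sfd^q(x,x_0)\,\d\mu_0\big)^{1/q} \le t W_q(\mu_0,\mu_1) + \diam(\spt(\mu_0))$, which bounds the $q$-moment of every intermediate measure uniformly; a Chebyshev-type argument then confines the bulk of the mass, and in fact since optimal plans are concentrated on geodesics joining $\spt(\mu_0)$ to $\spt(\mu_1)$ the support itself is contained in the $D$-neighbourhood of $\spt(\mu_0)\cup\spt(\mu_1)$. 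With this in hand the rest is the routine limiting argument sketched above.
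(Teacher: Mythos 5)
Your limiting argument is exactly the paper's proof: by the triangle inequality $|W_q(\mu_i,\mu_n)-W_q(\mu_i,\mu)|\le W_q(\mu_n,\mu)\to 0$ for $i=0,1$, so the two defining identities pass to the limit and $\mu\in\cI_t^q(\mu_0,\mu_1)$; this step alone is a complete proof, and it is what the paper does.

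However, the surrounding discussion that you call the ``key observation'' is both unnecessary and incorrect. First, closedness is asserted in the metric space $(\cP_q(\X),W_q)$, so the limit $\mu$ is by hypothesis an element of $\cP_q(\X)$; there is nothing to verify about its $q$-moment, and the claim that without bounded supports the set need not be closed is false --- the triangle-inequality argument works for arbitrary $\mu_0,\mu_1\in\cP_q(\X)$, and indeed the paper's proof never uses the bounded-support assumption (it is carried along in the statement for later use). Second, the asserted uniform bound on $\spt(\nu)$ for $\nu\in\cI_t^q(\mu_0,\mu_1)$ is wrong: the moment identities $W_q(\mu_0,\nu)=tW_q(\mu_0,\mu_1)$ only control $\int\sfd^q(x,x_0)\,\d\nu$, and a vanishing amount of mass can sit arbitrarily far away while keeping this integral fixed (e.g.\ in $\R$ with $\mu_0=\delta_0$, $\mu_1=\delta_1$, put mass $\eps$ at distance $\sim\eps^{-1/q}$), so intermediate measures are not supported in any fixed bounded set. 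Third, the appeal to geodesics is out of place: the lemma assumes only a metric space, and membership in $\cI_t^q$ is a purely metric condition on distances, not a statement that $\nu$ lies on a dynamical geodesic. None of this affects your conclusion, since your actual limit passage does not use these claims, but they should be deleted.
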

\begin{proof}
For any $\nu_n \subseteq \cI_t^q(\mu_0,\mu_1)$ with $\nu_n \rightarrow \nu$ in $W_q$, the triangular inequality gives
\[ \max_{i=0,1} |W_q(\mu_i,\nu)-W_q(\mu_i,\nu_n)| \le W_q(\nu_n,\nu), \]
from which the conclusion follows.
\end{proof}

We now face convexity properties of the set of $t$-intermediate measures. This statement should be interpreted as a way to redistribute mass on intermediate points of Wasserstein geodesics.
\begin{lemma}\label{lem:redistrmass}
Let $\Xdm$ be a metric measure space which is also geodesic and $q \in (1,\infty)$. Suppose $\mu_0, \mu_1 \in \cP(\X)$ with $W_q(\mu_0,\mu_1)<\infty$. Then, for any $\pi \in \OptGeo_q(\mu_0,\mu_1)$ and $f \colon \Geo(\X) \rightarrow [0,1]$ s.t. $c=(f\pi)(\Geo(\X))\in (0,1)$, we have
\[ (\e_t)_\sharp((1-f)\pi)+c\mu \in \cI_t^q(\mu_0,\mu_1),\]
for every $ \mu \in \cI_t^q(\frac{1}{c}(\e_0)_\sharp(f\pi),\frac1c(\e_1)_\sharp(f\pi), \ t \in (0,1)$.
\end{lemma}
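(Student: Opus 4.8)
The plan is to verify the two defining Wasserstein identities for membership in $\cI^q_t(\mu_0,\mu_1)$ directly, reducing each of them to an upper bound and letting the triangle inequality supply the matching lower bounds. Set $L:=W_q(\mu_0,\mu_1)$ and split $\pi=\pi^f+\pi^{1-f}$ with $\pi^f:=f\pi$ and $\pi^{1-f}:=(1-f)\pi$; then $\pi^f(\Geo(\X))=c$ and $\pi^{1-f}(\Geo(\X))=1-c$, both in $(0,1)$, so $\hat\pi^f:=c^{-1}\pi^f$ and $\hat\pi^{1-f}:=(1-c)^{-1}\pi^{1-f}$ are probability measures on $\Geo(\X)$. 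Writing $\mu^f_i:=(\e_i)_\sharp\hat\pi^f$ and $\mu^{1-f}_i:=(\e_i)_\sharp\hat\pi^{1-f}$, $i=0,1$, the identity $f+(1-f)\equiv 1$ gives $\mu_i=c\,\mu^f_i+(1-c)\,\mu^{1-f}_i$, while by definition the measure in the statement is $\nu_t:=(\e_t)_\sharp\pi^{1-f}+c\mu=(1-c)(\e_t)_\sharp\hat\pi^{1-f}+c\mu$. I would record at the outset two elementary facts: since $\pi$ is concentrated on constant-speed geodesics, $\sfd(\gamma_s,\gamma_r)=(r-s)\,\sfd(\gamma_0,\gamma_1)$ for $\pi$-a.e.\ $\gamma$ and $0\le s\le r\le 1$; and, combining this with \eqref{eq:costgeo}, $\int\sfd^q(\gamma_0,\gamma_1)\,\d\pi=\rmKe_q(\pi)=W_q^q(\mu_0,\mu_1)=L^q$. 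In particular $\int\sfd^q(\gamma_0,\gamma_1)\,\d\hat\pi^f\le c^{-1}L^q<\infty$, so $W_q(\mu^f_0,\mu^f_1)<\infty$ and the set $\cI^q_t(\mu^f_0,\mu^f_1)$, to which $\mu$ belongs by hypothesis, is meaningful.

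The core step is to produce the bound $W_q(\mu_0,\nu_t)\le tL$ by gluing two couplings. On the $(1-f)$-block I would use $\Sigma:=(\e_0,\e_t)_\sharp\pi^{1-f}$, a coupling with marginals $(\e_0)_\sharp\pi^{1-f}$ and $(\e_t)_\sharp\pi^{1-f}$ and cost $\int\sfd^q(\gamma_0,\gamma_t)\,\d\pi^{1-f}=t^q\int\sfd^q(\gamma_0,\gamma_1)\,\d\pi^{1-f}$; on the $f$-block I would use $c\,\alpha$, where $\alpha\in\Pi(\mu^f_0,\mu)$ is $W_q$-optimal, of cost $c\,W_q^q(\mu^f_0,\mu)=c\,t^q W_q^q(\mu^f_0,\mu^f_1)$, the last equality being exactly the first half of $\mu\in\cI^q_t(\mu^f_0,\mu^f_1)$. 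The measure $\Sigma+c\,\alpha$ then lies in $\Pi(\mu_0,\nu_t)$, since it projects to $c\mu^f_0+(1-c)\mu^{1-f}_0=\mu_0$ and to $c\mu+(1-c)(\e_t)_\sharp\hat\pi^{1-f}=\nu_t$; using $c\,W_q^q(\mu^f_0,\mu^f_1)\le\int\sfd^q(\gamma_0,\gamma_1)\,\d\pi^f$, its total cost is at most $t^q\big(\int\sfd^q(\gamma_0,\gamma_1)\,\d\pi^f+\int\sfd^q(\gamma_0,\gamma_1)\,\d\pi^{1-f}\big)=t^q\int\sfd^q(\gamma_0,\gamma_1)\,\d\pi=t^qL^q$, whence $W_q(\mu_0,\nu_t)\le tL$. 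The mirror construction, with $(\e_t,\e_1)_\sharp\pi^{1-f}$ on the $(1-f)$-block and an optimal $\alpha'\in\Pi(\mu,\mu^f_1)$ on the $f$-block — now invoking the second half $W_q(\mu,\mu^f_1)=(1-t)W_q(\mu^f_0,\mu^f_1)$ — gives $W_q(\nu_t,\mu_1)\le(1-t)L$ in the same way.

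To finish, the triangle inequality $L=W_q(\mu_0,\mu_1)\le W_q(\mu_0,\nu_t)+W_q(\nu_t,\mu_1)\le tL+(1-t)L=L$ forces both upper bounds above to be equalities, which is precisely the assertion $\nu_t\in\cI^q_t(\mu_0,\mu_1)$.

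I do not expect a serious obstacle here: the argument is soft optimal-transport bookkeeping, using only the constant-speed property of geodesics, the characterization \eqref{eq:costgeo}, and the fact that convex combinations of transport plans are transport plans with affinely combined cost. The only mild points of care are keeping the normalizing constants $c$ and $1-c$ straight when gluing, and observing that we never need the reverse cost inequalities — in particular we do not need $\hat\pi^f$ or $\hat\pi^{1-f}$ to be optimal geodesic plans between their own marginals — because the lower bounds $W_q(\mu_0,\nu_t)\ge tL$ and $W_q(\nu_t,\mu_1)\ge(1-t)L$ are supplied for free by the triangle inequality once the two upper bounds are established.
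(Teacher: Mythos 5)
Your proof is correct. The paper itself omits the argument and simply refers to Rajala's Lemma 3.5 (noting it reads identically for general $q$), and your write-up is essentially that same argument: split $\pi$ into $f\pi$ and $(1-f)\pi$, glue the restricted coupling $(\e_0,\e_t)_\sharp((1-f)\pi)$ (resp.\ $(\e_t,\e_1)_\sharp((1-f)\pi)$) with an optimal coupling between the normalized endpoint marginals of $f\pi$ and the intermediate measure $\mu$, use constant speed of geodesics together with \eqref{eq:costgeo} to bound the total cost by $t^qW_q^q(\mu_0,\mu_1)$ (resp.\ $(1-t)^qW_q^q(\mu_0,\mu_1)$), and let the triangle inequality upgrade the two upper bounds to the defining equalities of $\cI^q_t(\mu_0,\mu_1)$.
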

\begin{proof}
We omit the details and refer to \cite[Lemma 3.5]{Rajala12-2} for the proof which reads identical for arbitrary $q$.
\end{proof}
We consider the excess mass functional
\[ \cF_C(\eta) := \|(\rho-C)^+\|_{L^1(\mm)}+ \eta^s(\X),\]
and observe that, when it vanishes on a probability measure, it automatically detects both absolutely continuity with respect to the reference measure with corresponding density $L^\infty$-bounded from the constant $C$. We prove first that it is lower semicontinuous.
\begin{lemma}\label{lem:FClsc}
Let $(\X,\sfd)$ be a bounded metric space equipped with a finite Borel measure $\mm$ and $q \in (1,\infty)$. Then, for any $C\ge 0$, the functional $\cF_C$ is lower semicontinuous on $(\cP_q(\X),W_q)$.
\end{lemma}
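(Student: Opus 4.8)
The plan is to exhibit $\cF_C$ as a supremum of functionals that are continuous for the $W_q$-topology: once this is done, lower semicontinuity is immediate. The driving identity is the elementary duality formula $(s-C)^+=\sup_{0\le a\le1}a(s-C)$ for $s\ge0$, which suggests testing the density $\rho$ against continuous functions $a\colon\X\to[0,1]$ and reading the singular part of $\eta$ against the same $a$.

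First I would fix $\eta\in\cP_q(\X)$, write $\eta=\rho\mm+\eta^s$ with $\eta^s\perp\mm$, and choose a Borel set $S$ with $\eta^s(\X\setminus S)=0=\mm(S)$. For every $a\in C_b(\X)$ with $0\le a\le1$ the pointwise bounds $a(\rho-C)\le(\rho-C)^+$ and $a\le1$ give
\[
\int a\,\d\eta-C\int a\,\d\mm=\int a(\rho-C)\,\d\mm+\int a\,\d\eta^s\le\|(\rho-C)^+\|_{L^1(\mm)}+\eta^s(\X)=\cF_C(\eta).
\]
Hence, setting $\Phi(\eta):=\sup\bigl\{\int a\,\d\eta-C\int a\,\d\mm:\ a\in C_b(\X),\ 0\le a\le1\bigr\}$, one has $\Phi\le\cF_C$. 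Since $W_q$-convergence implies weak convergence, each map $\eta\mapsto\int a\,\d\eta-C\int a\,\d\mm$ (an affine function of $\int a\,\d\eta$) is $W_q$-continuous, so $\Phi$ is $W_q$-lower semicontinuous.

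It then remains to prove the converse inequality $\cF_C\le\Phi$. Put $A:=\{\rho>C\}\cup S$ and note, using $\mm(S)=0$ and $\eta^s(\X\setminus S)=0$, that $\cF_C(\eta)=\eta(A)-C\mm(A)$. By outer (resp.\ inner) regularity of the finite Borel measure $\eta+\mm$ on the metric space $\X$, for any $\varepsilon>0$ there are a closed set $F$ and an open set $U$ with $F\subseteq A\subseteq U$ and $(\eta+\mm)(U\setminus F)<\varepsilon$; Urysohn's lemma provides $a\in C_b(\X)$ with $\nchi_F\le a\le\nchi_U$ (in particular $0\le a\le1$). Then $\int a\,\d\eta\ge\eta(F)\ge\eta(A)-\varepsilon$ and $\int a\,\d\mm\le\mm(U)\le\mm(A)+\varepsilon$, whence
\[
\int a\,\d\eta-C\int a\,\d\mm\ge\eta(A)-C\mm(A)-(1+C)\varepsilon=\cF_C(\eta)-(1+C)\varepsilon.
\]
Letting $\varepsilon\downarrow0$ gives $\Phi(\eta)\ge\cF_C(\eta)$, so $\cF_C=\Phi$ and the lemma follows.

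The argument is essentially routine; the only points that need a little care are the identity $\cF_C(\eta)=\eta(A)-C\mm(A)$ — i.e.\ that in $\cF_C$ the singular part is weighted exactly by the recession coefficient $1$ of $s\mapsto(s-C)^+$ — and the measure-theoretic approximation that upgrades the optimal indicator $\nchi_A$ to a genuine competitor in $C_b(\X)$ without losing mass. Alternatively, one could simply invoke the general lower semicontinuity theorem for integral functionals on spaces of measures with convex integrand and affine recession function.
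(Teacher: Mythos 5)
Your proof is correct and follows essentially the same route as the paper: both rest on writing $\cF_C$ as the supremum over continuous $g$ with $0\le g\le1$ of the $W_q$-continuous functionals $\eta\mapsto\int g\,\d\eta-C\int g\,\d\mm$. The only difference is that the paper cites Rajala's work for this representation (and for the semicontinuity in the case $q=2$, transferred to general $q$ via the equivalence of $W_q$-convergence and weak convergence on a bounded space), whereas you prove the duality identity directly via the decomposition $\eta=\rho\mm+\eta^s$ and inner/outer regularity, which makes the argument self-contained.
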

\begin{proof}
Rajala's proof consists in showing that
\[ \cF_C(\mu) = \sup \big\{\int g\, \d \mu -C\int g\, \d \mm \colon g \in C(\X), \ 0\le g\le 1 \big\},\]
for every $\mu \in \cP_q(\X)$. It is proven in \cite{Rajala12-2}) that it is lower semicontinuous in $\PP_2(\X)$. Then, recalling \eqref{eq:Wqweak} and the present hypothesis, $\cF_C$ is equivalently lower semicontinuous in the space $(\PP_q(\X),W_q)$ independently on $q \in (1,\infty)$.
\end{proof}

\subsubsection*{Proof of Theorem \ref{B_CDqN} (and Theorem \ref{thm:rajalaCDqKN})} 
We subdivide the proof into several steps. Write for simplicity $ \mu_i:= \rho_i\mm$ for $i=0,1$. First observe that, for every $K$, we can alternatively prove the statement in the larger (or equal) class of $\cd_q(-K^-,\infty)$-spaces. Hence, for simplicity, we shall consider only $K<0$ in the proof. 

\noindent\textsc{Step 1}. From the $\cd_q(K,\infty)$ condition, there exists $\pi \in \OptGeo_q(\mu_0,\mu_1)$ which is concentrated on geodesics with length at most $D$. Also, we denote for simplicity $E:= \{\rho_\frac12>0\}$. We observe firstly that Jensen's inequality with the convexity of $u(x)=x\log x$ guarantees
\[ 
{\rm Ent}_\mm(\mu_\frac12) = \int _E \rho_\frac12\log\rho_\frac12 \, \d \mm  \ge \log \Big( \frac{1}{\mm(E)}\Big),
\]
and secondly that 
\[\begin{split}
{\rm Ent}_\mm(\mu_\frac12) &\le \frac12{\rm Ent}_\mm(\mu_0)+\frac12{\rm Ent}_\mm(\mu_1) +\frac{-K}8W_q^2(\mu_0,\mu_1) \\
&\le \log\big(\|\rho_0\|_{L^\infty(\mm)}\vee \|\rho_1\|_{L^\infty(\mm)}\big) +\frac{-KD^2}8.
\end{split} \]
We can thus combine in both case the two inequalities to get the following \emph{spreading of mass under curvature dimension condition} principle:
\begin{equation}
\mm(\{\rho_\frac12>0\}) \ge \frac{1}{P(D,K)\|\rho_0\|_{L^\infty(\mm)}\vee \|\rho_1\|_{L^\infty(\mm)} }, \label{eq:spreadmass}
\end{equation}  
where $P(D,K)= e^{K^-D^2/8}$. For simplicity, we write from now on $M:= P(D,K)\|\rho_0\|_{L^\infty(\mm)}\vee \|\rho_1\|_{L^\infty(\mm)} $.

\noindent\textsc{Step 2}. We claim that for any $C>M$, there exists a minimizer of $ \cF_C(\cdot)$ in $\cI_\frac12^q(\mu_0,\mu_1)$.   This has been shown in \cite{Rajala12-2} building exactly upon \eqref{eq:spreadmass}, Lemma \ref{lem:Itqclosed}, Lemma \ref{lem:redistrmass} and Lemma \ref{lem:FClsc} for the particular case $q=2$. Being these results valid also for general $q \in (1,\infty)$ we omit the details of the strategy ensuring that 
\begin{equation}
\forall C> M, \ \exists \mu \in \cI_\frac12^q(\mu_0,\mu_1) \quad \text{ so that } \quad \cF_C(\mu) = \inf_{\eta \in \cI_\frac12^q(\mu_0,\mu_1)} \cF_C(\eta).\label{eq:inter1}
\end{equation}

\noindent\textsc{Step 3}.  For any $C > M$, we claim that
\[ \inf_{\eta \in \cI_\frac12^q(\mu_0,\mu_1)} \cF_C(\eta)=0.\]
Denote $\cI^q_{min}\subset \cI_\frac12^q(\mu_0,\mu_1)$ the set of minimizers of $\cF_C$ (which is always nonempty \eqref{eq:inter1}) and let $\mu \in \cI^q_{min}$ be such that 
\begin{equation} \mm(\rho_\mu>C) \ge \Big(\frac MC\Big)^{\frac14} \sup_{\eta \in \cI_\frac12^q(\mu_0,\mu_1)} \mm(\{\rho_\eta>C\}), \label{eq:Binterim}
\end{equation}
where $\mu:= \rho_\mu\mm+\mu^s$ with $\mu^s\perp\mm$ and $\eta:= \rho_\eta\mm+\mu^s$ with $\eta^s\perp \mm$. We argue now by contradiction and suppose $\cF_C(\mu)>0$ whence. If $A:=\{\rho_\mu>0\}$, then this means necessarily that $\mm(A)>0$ and $\mu^2(\X) >0$. In the first case, find a $\delta >0$ so that, denoting $A':=\{\rho_\mu>C+\delta\}$, we have 
\[ \mm(A') \ge \Big(\frac{M}{C}\Big)^{\frac12}\mm(A).\]
Let $\alpha \in \Opt_q(\mu_0,\mu), \beta \in \Opt_q(\mu,\mu_1)$ and consider 
\[ \tilde{\pi} \in \OptGeo_q\Big( (P^0)_\sharp\frac{\alpha\restr{\X\times A'}}{\mu(A')}, (P^1)_\sharp\frac{\beta\restr{A'\times \X}}{\mu(A')}  \Big),\]
given by Step 1. Denote $\Gamma_t := (\e_t)_\sharp\tilde{\pi}$ the corresponding $W_q$-geodesic and consider its decomposition $\Gamma_\frac12 = \rho_\Gamma\mm +\Gamma^s$. Then, from \eqref{eq:spreadmass}, it follows that
\begin{equation}
 \mm(\{\rho_\Gamma >0\} \ge \frac{\mu(A')}{M} \ge \frac{C}{M}\mm(A')\ge\Big(\frac{C}{M}\Big)^\frac12\mm(A). \label{eq:Binterim2}
 \end{equation}
Now  consider redistributing the mass of the measure $\mu$ via
\[\tilde{\mu}:= \mu\restr{X\setminus A'} + \frac{C}{C+\delta}\mu\restr{A'}+\frac\delta{C+\delta}\mu(A')\Gamma_\frac12. \]

Arguing then as in \cite[Lemma 3.5]{Rajala12-2}, Lemma \ref{lem:redistrmass} directly yields $\tilde{\mu} \in \cI_\frac12^q(\mu_0,\mu_1)$. Also, setting $\tilde{\mu}=\rho_{\tilde{\mu}}\mm+\tilde{\mu}^s$ with $\tilde{\mu}^s\perp \mm$, a standard calculation shows that the excess functional decreases. We omit here the details to get 
\[ \cF_C(\mu)-\cF_C(\tilde{\mu}) = \int_{\{\rho_\mu <C\}} \min\big\{ C-\rho_\mu, \frac{\delta}{C+\delta}\mu(A')\rho_\Gamma\Big\}\, \d \mm. \]
Notice that, being $\mu$ a minimizer, the right hand most side of the above equation is nonpositive whence, necessarily the integral must vanish giving in turn 
\[ \tilde{\mu} \in \cI_{min}^q(\mu_0,\mu_1), \qquad \mm(\{\rho_\mu<C\}\cap \{\rho_\Gamma>0\})=0.\]
Moreover, $\rho_{\tilde{\mu}}>C$ $\mm$-a.e. on the set $\{\rho_\mu \ge C\}\cap \{\rho_\Gamma>0\}$ , hence 
\[\mm (\{ \rho_{\tilde{\mu}}>C\} ) \ge \mm(\{\rho_\Gamma>0\}) \overset{\eqref{eq:Binterim2}}{\ge} \Big(\frac{C}{M}\Big)^\frac12\mm(\{ \rho_\mu >C\} ) \overset{\eqref{eq:Binterim}}{\ge}  \Big(\frac CM\Big)^{\frac14} \sup_{\eta \in \cI_\frac12^q(\mu_0,\mu_1)} \mm(\{\rho_\eta>C\}), \]
yielding a contradiction, since $\tilde{\mu} \in \cI_\frac12^q(\mu_0,\mu_1)$ and $C>M$. Therefore, $A$ is negligible and the first situation does not occur: necessarily $\mu$ is purely singular, otherwise there is nothing to prove. But then a similar redistribution of mass for the singular part applies giving a contradiction. Wrapping up, we showed
\[ \forall C>M \qquad \exists \min_{\eta \in \cI_\frac12^q(\mu_0,\mu_1)}\cF_C(\eta) =0.\]
The extreme case $C=M$ can be obtained with an easy argument as in \cite[Corollary 3.12]{Rajala12-2}: being $\mu_0,\mu_1$ supported on bounded sets, we can find a bounded set $B\subset \X$ so that every $\eta \in \cI_\frac12^q(\mu_0,\mu_1)$ is supported in $B$, and hence
\[ \min_{\eta \in \cI_\frac12^q(\mu_0,\mu_1)}\cF_M(\eta) \le \min_{\eta \in \cI_\frac12^q(\mu_0,\mu_1)}\cF_C(\eta) + |C-M|\mm(B) = |C-M |\mm(B),\]
for every $C>M$. Thus, the conclusion follows also for $C=M$ by approximation $C\downarrow M$.

\noindent\textsc{Step 4}. The conclusion of the theorem will be achieved by iterating the above construction from midpoint to a general dyadic partition of $[0,1]$. Fix $n\in \N$, we now show how to produce from the measures $(\rho_{k2^{-n+1}})$ for $k =0,...,2^{-n+1}$ the successive sequence $(\rho_{k2^{-kn}})$. Consider, for $k$ odd, the midpoints $\mu_{k2^{-n}} \in \cI_\frac12^q(\rho_{k2^{-n+1}}\mm,\rho_{(k+1)2^{-n+1}}\mm)$ satisfying
\[
\begin{split}
 &\mu_{k2^{-n}} \ll \mm, \qquad \mu_{k2^{-n}}:=\rho_{k2^{-n}} \mm \\
 &\|\rho_{k2^{-n}} \|_{L^\infty(\mm)} \le  P(2^{-n+1}D,K)\|\rho_{(k-1)2^{-n}} \|_{L^\infty(\mm)}\vee \|\rho_{(k+1)2^{-n}} \|_{L^\infty(\mm)},
 \end{split}
  \]
since diam$($supp$(\rho_{(k-1)2^{-n}}) \cup$supp$(\rho_{(k+1)2^{-n})})< 2^{-n+1}D$ and all the previous steps apply. By induction, it is easy to prove that
\[
\|\rho_{k2^{-n}} \|_{L^\infty(\mm)} \le \prod_{i=1}^n P(2^{-i+1}D,K)\|\rho_0 \|_{L^\infty(\mm)}\vee \|\rho_0 \|_{L^\infty(\mm)}, \qquad \forall n \in \N,
\]
which, under the assumption $D<\infty$, can be coupled with the fact 
\[ C(D,K):=\lim_{n\rightarrow \infty}  \prod_{i=1}^n P(2^{-i+1}D,K) <\infty, \]
giving in turn that a geodesic curve $\mu_t$ in $\OptGeo_q(\mu_0,\mu_1)$ is well defined by completion. Finally, the $L^\infty$-estimate on $\rho_t$ holds by lower semicontinuity of the functional $\cF_M$.
\qed

\bigskip{\bf Acknowledgment.} We wish to thank F. Cavalletti for his valuable comments around the topics of this note and E. Pasqualetto for useful remarks concerning Appendix \ref{A} and for suggesting Step 1 of Theorem \ref{thm:BIPMCP}.

We also warmly thank the referees for their useful comments.

\def\cprime{$'$} \def\cprime{$'$}

\end{document}